\def\tr{\intercal} % Transpose symbol
\newcommand{\sqrtb}[1]{\left(#1\right)^{1/2}}
\def\R{\mathbb{R}}
\def\Ref{\mathcal{R}}
\def\Q{\mathcal{Q}}
\def\T{\mathcal{T}}
\def\O{\mathcal{O}}
\def\nc{n_c} % Number of components
\def\oned{\mathrm{1D}}
\def\Fhat{\widehat{\bm F}}
\def\uu{\mathtt{u}}
\def\G{\mathtt{G}}
\def\J{\mathtt{J}}
\def\Jinv{\mathtt{GJ}^{-1}}
\def\Jinvidp{\widehat{\mathtt{GJ}}^{-1}}
\def\FF{\mathtt{F}}
\def\FFhat{\widehat{\mathtt{F}}}
\def\M{\mathtt{M}}
\def\D{\mathtt{D}}
\def\Didp{\widehat{\D}}
\def\B{\mathtt{B}}
\def\n{\mathtt{n}}
\def\wn{\mathtt{wn}}
\newtheorem{prop}{Proposition}
\newtheorem{defn}{Definition}
\newtheorem*{rem*}{Remark}
\title%
[Sparse invariant domain preserving DG methods with convex limiting]%
{Sparse invariant domain preserving discontinuous Galerkin methods with subcell convex limiting}
\author{Will Pazner}
\address{Center for Applied Scientific Computing, Lawrence Livermore National Laboratory}
\begin{document}

\maketitle

\begin{abstract}
In this paper, we develop high-order nodal discontinuous Galerkin methods for hyperbolic conservation laws that satisfy invariant domain preserving properties using a subcell flux corrections and convex limiting.
These methods are based on a subcell flux corrected transport (FCT) methodology, that involves blending a high-order target scheme with a robust, low-order invariant domain preserving method that is obtained using a graph viscosity technique.
The new low-order discretizations are based on sparse stencils which do not increase with the polynomial degree of the high-order DG method.
As a result, the accuracy of the low-order method does not degrade when used with high-order target methods.
The method is applied to both scalar conservation laws, for which the discrete maximum principle is naturally enforced, and to systems of conservation laws such as the Euler equations, for which positivity of density and a minimum principle for specific entropy are enforced.
Numerical results are presented on a number of benchmark test cases.
\end{abstract}

\section{Introduction}

High-order numerical methods have been successfully applied to a wide range of applications \cite{Shu2016,Slotnick2014,Wang2013}.
These methods promise higher accuracy per degree of freedom when compared with traditional low-order methods, and have the potential to achieve high efficiency on modern computing architectures \cite{Hutchinson2016,Brown2018,Franco2019a}.
For example, in the field of computational fluid dynamics, such methods have seen particular success when applied to under-resolved turbulent flows, such as in the context of implicit large eddy simulation (ILES) \cite{Moura2017,Pazner2017,Carton-de-Wiart2015}.
However, a critical issue that must be addressed when applying high-order methods to convection-dominated problems is their robustness, especially in the context of nonlinear problems with discontinuous features such as shock waves \cite{Moura2017a,Zahr2018,Pazner2019}.

In particular, discontinuous Galerkin (DG) methods have seen considerable success when applied to convection dominated problems \cite{Cockburn2001}.
These methods possess many desirable properties, such as arbitrary formal order of accuracy, and suitability for use with unstructured meshes.
The robustness of DG methods is the subject of a large body of research \cite{Klose2019,Moura2017}.
For scalar conservation laws and symmetric systems, the DG method satisfies a cell entropy inequality \cite{Jiang1994}.
However, for general hyperbolic systems, such as the Euler equations, techniques such as flux differencing are required to ensure entropy stability \cite{Fisher2013,Chen2017,Chan2018,Pazner2018b}.
Furthermore, the use of high degree polynomials can introduce oscillations, and therefore limiters or artificial viscosity techniques are often used for bounds preservation, monotonicity, and shock capturing \cite{Qiu2005,Krivodonova2007,Krivodonova2004,Persson2006}.

An alternative to the above stabilization and limiting strategies is an approach developed by Guermond, Popov, and colleagues, based on invariant domain preserving (IDP) discretizations and convex limiting \cite{Guermond2016,Guermond2018,Guermond2019,Kuzmin2020}.
A desirable property for numerical discretizations of hyperbolic conservation laws is \textit{invariant domain preservation}: if the exact solution to the conservation law lies in a convex invariant set, then the numerical solution should as well~\cite{Guermond2016}.
This is a generalization of the concept of a discrete maximum principle, and will ensure that the the discretization is bounds preserving, positivity preserving, and non-oscillatory.
Suitable low-order invariant domain preserving (IDP) discretizations have been paired with high-order discretizations using convex limiting or algebraic flux correction strategies to obtain second-order accurate methods that preserve specific invariant domains \cite{Guermond2018,Guermond2019,Kuzmin2020}.

In this work, we develop high-order discontinuous Galerkin methods that satisfy invariant domain preserving properties using a convex limiting strategy.
The limiting strategy makes use of a novel sparse low-order IDP method whose stencil does not grow with the polynomial degree of the corresponding high-order method.
Crucially, the accuracy of the low-order method does not degrade as the polynomial degree of the high-order method is increased, as is observed to occur with more naive graph visocity approaches.
Related strategies for sparsifying the convective operator for Bernstein basis finite element methods were previously developed by Kuzmin and colleagues \cite{Kuzmin2020a}.
The flux-corrected method is obtained by performing an efficient, dimension-by-dimension subcell flux correction procedure, blending the low-order IDP method with the high-order target DG method.
The resulting method is conservative, and can satisfy any number of constraints on quasiconcave functionals specified by the user (cf.~\cite{Guermond2018}).
Since the accuracy of the low-order method does not decrease with the polynomial degree of the target method, we observe more accurate results using higher-order methods with a fixed number of degrees of freedom, even on problems with discontinuous solutions.
This method can also be combined with a subcell resolution smoothness indicator to alleviate peak clipping effects near smooth extrema.

The structure of this paper is as follows.
In Section \ref{sec:formulation}, we formulate the high-order DG discretization, and state some key properties.
In Section \ref{sec:idp}, we introduce a new low-order sparsified discretization that can be rendered invariant domain preserving using a graph viscosity approach.
We develop a subcell flux correction strategy for blending the high-order (target) method and the low-order IDP method in Section \ref{sec:fct}.
As specific examples, applications to the linear advection equation with variable velocity field and the Euler equations of gas dynamics are discussed.
A number of numerical test cases demonstrating the effectiveness of the method on both scalar equations and systems of hyperbolic conservation laws are presented in Section \ref{sec:results}.
Finally, we end with some concluding remarks in Section \ref{sec:conclusions}.

\section{Governing equations and discretization}
\label{sec:formulation}

Consider a system of hyperbolic conservation laws,
\begin{equation} \label{eq:cons-law}
   \frac{\partial\bm u}{\partial t} + \nabla \cdot \bm F(\bm u) = 0,
\end{equation}
with solution $\bm u(\bm x, t) \in \R^{\nc}$, $\bm x \in \Omega \subseteq \R^d$.
The flux function is given by $\bm F(\bm u(\bm x)) \in \R^{d \times \nc}$.
The spatial dimension is denoted $d$, and the number of solution components is $\nc$.
The initial conditions are given by $\bm u(\bm x, 0) = \bm u_0(\bm x)$.
Closely associated with the problem \eqref{eq:cons-law} is the following one dimensional Riemann problem, which will be important both for the definition of invariant domain preservation, and for the formulation of the discontinuous Galerkin discretization.
Let $\bm u^-, \bm u^+ \in \R^{\nc}$ be a pair of admissible states, and let $\bm n \in \R^d$ by any unit vector.
We assume that the Riemann problem
\begin{equation} \label{eq:riemann-problem}
    \partial_t \bm u + \partial_x \left( \bm F(\bm u) \cdot \bm n \right) = 0,
    \qquad
    \bm u(x, 0) = \begin{cases}
        \bm u^- & \text{if $x < 0$,} \\
        \bm u^+ & \text{if $x > 0$,}
    \end{cases}
\end{equation}
has a unique self-similar entropy solution $\bm u(x,t) = \bm u_{\bm n, \bm u^-, \bm u^+}(x,t)$.
We denote by $\lambda_{\rm max}$ the maximum wave speed for \eqref{eq:riemann-problem}, for which we have $\bm u(x, t) = \bm u^-$ if $x/t \leq -\lambda_{\rm max}$, and $\bm u(x, t) = \bm u^+$ if $x/t \geq \lambda_{\rm max}$.

We recall from \cite{Guermond2016,Guermond2018} the following definition of an invariant set for \eqref{eq:cons-law}.
\begin{defn}
  We say a set $\mathcal{A} \subseteq \R^{\nc}$ is an \textbf{invariant set} for \eqref{eq:cons-law} if, for any pair of states $\bm u^-, \bm u^+ \in \mathcal{A}$, and for any unit vector $\bm n \in \R^d$, the average of the solution $\bm u(x,t) = \bm u_{\bm n, \bm u^-, \bm u^+}(x, t)$ to the Riemann problem \eqref{eq:riemann-problem}, given by
  \[
    \frac{1}{2 t \lambda_{\rm max}} \int_{-\lambda_{\rm max}t}^{\lambda_{\rm max}t} \bm u(x,t) \, dx
  \]
  remains in $\mathcal{A}$ for any $t > 0$.
\end{defn}
For example, the maximum principle implies that any interval $[a,b] \subseteq \R$ is an invariant set for scalar conservation laws.
For the Euler equations, the set of states with positive density, positive internal energy, and satisfying a minimum principle on specific entropy is a convex invariant set.
Additional examples of invariant sets for systems of conversation laws are given in \cite{Guermond2016,Guermond2019}.
It will be desirable to construct discretizations of \eqref{eq:cons-law} that are \textit{invariant domain preserving} (IDP), meaning that if the approximate solution lies in a convex invariant set $\mathcal{A}$ at some time $t_0$, then the solution will remain in $\mathcal{A}$ for all time $t > t_0$.

The strategy we present here for developing IDP discretizations for \eqref{eq:cons-law} is as follows.
We first formulate a high-order discontinuous Galerkin (DG) discretization that will serve as a \textit{target} scheme.
This discretization will in general not be invariant domain preserving.
We will then modify this high-order discretization to generate a robust low-order discretization.
These modifications take the form of first sparsifying the method to reduce the size of the stencil, and then adding a \textit{graph viscosity} term (cf.~\cite{Guermond2016}), which guarantees that the resulting discretization is invariant domain preserving.
Finally, a subcell flux corrected transport (FCT) technique is used to blend the low-order IDP method and the high-order target method in such a way that specified convex invariant sets are preserved.

\subsection{DG formulation}

We begin by defining the high-order DG discretization for equation \eqref{eq:cons-law}.
The spatial domain $\Omega$ is discretized with a mesh of tensor-product elements denoted $\T_h$.
Each element $K \in \T_h$ is the image of the reference element $\Ref = [0,1]^d$ (the unit cube in $d$ dimensions) under a transformation mapping $T_K$.
To define the standard discontinuous Galerkin finite element space $V_h$, first consider the space $\Q_p([0,1]^d)$ defined on the reference element $\Ref$ consisting of all multivariate polynomials of degree at most $p$ in each variable.
On a given element $K\in\T_h$, we define the space $\Q_p(K)$ to be spanned by functions $\phi \circ T_K^{-1}$, where $T_K : \Ref \to K$ is the element mapping, for all $\phi \in \Q_p([0,1]^d)$.
Then, the space $V_h$ is defined as
\begin{equation}
   V_h = \left\{
      v_h \in L^1(\Omega) : v_h|_K \in \Q_p(K) \text{ for all } K \in \T_h
   \right\}.
\end{equation}
Note that no continuity is enforced between adjacent elements.
To represent approximate solutions to \eqref{eq:cons-law}, we also consider the vector version of this space $\bm V_h = [V_h]^{\nc}$.

We use a nodal Gauss-Lobatto basis for the space $V_h$.
Let $\xi_i$ denote the Gauss-Lobatto points in the interval $[0,1]$, and let $\phi_i$ denote the Lagrange interpolating polynomial satisfying $\phi_i(\xi_j) = \delta_{ij}$, where $\delta_{ij}$ is the Kronecker delta.
These functions form a basis for the space $\Q_p([0,1])$ in one dimension.
The basis for $\Q_p([0,1]^d)$ is formed by taking the tensor product of the one-dimensional basis.
To be precise, we define a function $\Phi_{\bm i}(\bm x) = \prod_{j=1}^d \phi_{i_j}(x_i)$, where $\bm i$ denotes the multi-index $\bm i = (i_1, \ldots, i_d)$, and $\bm x = (x_1, \ldots, x_d)$.
This basis can also be seen to be the nodal interpolation basis corresponding to the Cartesian product of the one-dimensional Gauss-Lobatto nodes.

We approximate the solution to \eqref{eq:cons-law} by $\bm u_h \in \bm V_h$, multiply the equation by a test function $\bm v_h \in \bm V_h$, and integrate over the domain $\Omega$, integrating the flux term by parts over each element $K \in \T_h$.
Because the space $V_h$ is discontinuous, the fluxes $\bm F(\bm u_h)$ are not well-defined on element interfaces.
Consider two neighboring elements, $K^-$ and $K^+$.
Let $\bm u_h^-$ denote the trace of $\bm u_h$ from within $K^-$, and similarly for $\bm u_h^+$.
We therefore introduce a single-valued numerical flux function $\Fhat(\bm u_h^-, \bm u_h^+, \bm n^-)$, obtaining the weak formulation
\begin{equation} \label{eq:weak} \tag{WF}
   \int_\Omega \partial_t \bm u_h \cdot \bm v_h \,d\bm x
   - \sum_{K\in\T_h} \int_K \bm F(\bm u_h) : \nabla \bm v_h \, d\bm x
   + \sum_{K^- \in \T_h} \int_{\partial K^-} \Fhat(\bm u_h^-, \bm u_h^+, \bm n^-) \cdot \bm v_h^- \,ds = 0.
\end{equation}
Integrating the second term on the left-hand side once more by parts, element-by-element one obtains what is know as the strong formulation,
\begin{equation}
\label{eq:strong} \tag{SF}
% \begin{multlined}[c][0.85\displaywidth]
   \int_\Omega \partial_t \bm u_h \cdot \bm v_h \,d\bm x
   + \sum_{K\in\T_h}\int_K \left( \nabla \cdot \bm F(\bm u_h) \right) \cdot \bm v_h \, d\bm x
   + \sum_{K^- \in \T_h} \int_{\partial K^-} \left( \Fhat(\bm u_h^-, \bm u_h^+, \bm n^-) - \bm F(\bm u_h^-) \cdot \bm n^- \right) \cdot \bm v_h^- \,ds = 0,
% \end{multlined}
\end{equation}
Note that at the continuous level, the formulations \eqref{eq:weak} and \eqref{eq:strong} are equivalent.
However, after discretization, they may differ because of inexact integration.

For the purposes of discretization, it is convenient to transform the integrals in both \eqref{eq:weak} and \eqref{eq:strong} to integrals over the reference element $\Ref$.
This is done using a standard transformation of the governing equation \eqref{eq:cons-law} from each element $K$ to the reference element $\Ref$.
Consider a given element $K\in\T_h$ with transformating mapping $T_K$.
Let $J$ denote the Jacobian matrix of the mapping $T_K$.
The inverse of the Jacobian is used to define the contravariant fluxes
\begin{equation} \label{eq:contravariant}
     \tilde{\bm F}_i = \det(J) \sum_{j=1}^d J^{-1}_{ij} \bm F_j.
\end{equation}
Then, on the reference element, the solution $\bm u_h$ evolves according to the transformed conservation law
\begin{equation} \label{eq:transformed-law}
     \frac{\partial \bm{\tilde{u}}(\bm \xi)}{\partial t}
     + \nabla \cdot \bm{\tilde{F}} (\bm{\tilde{u}}(\bm \xi)) = 0,
     \qquad \bm\xi \in \Ref,
\end{equation}
where $\bm{\tilde{u}} = \det(J) \bm u$.

\subsubsection{Numerical flux functions}
\label{sec:numerical-flux}

An important aspect of DG methods is the choice of numerical flux function $\Fhat$.
The numerical flux functions are typically chosen to be either exact or approximate Riemann solvers for the one-dimensional Riemann problem in the normal direction at element interfaces.
In this work, we will make use of the simple local Lax-Friedrichs numerical flux function.
The reason for this choice is that the Lax-Friedrichs flux is compatible with the graph viscosity used to ensure that the low order discretization is invariant domain preserving, as will be discussed in greater detail in Section \ref{sec:idp}.
The Lax-Friedrichs flux is defined by
\begin{equation} \label{eq:lax-friedrichs}
    \Fhat(\bm u_h^-, \bm u_h^+, \bm n^-)
    = \frac{1}{2} \left( \bm F(\bm u_h^-) + \bm F(\bm u_h^+) \right) \cdot \bm n^-
    - \frac{\lambda}{2} \left( \bm u_h^+ - \bm u_h^-  \right)
\end{equation}
where $\lambda \geq \lambda_{\max}(\bm u_h^-, \bm u_h^+, \bm n^-)$ is an upper bound for the maximum wave speed of the Riemann problem
\begin{equation}
    \partial_t \bm u + \partial_x \left( \bm F(\bm u) \cdot \bm n^- \right) = 0,
    \qquad
    \bm u(x, 0) = \begin{cases}
        \bm u_h^- & \text{if $x < 0$,} \\
        \bm u_h^+ & \text{if $x > 0$.}
    \end{cases}
\end{equation}

\subsection{Collocated and one-dimensional operators}

The discontinuous Galerkin spectral element method (DG-SEM) is distinguished from other DG methods by a specific choice of quadrature rule and basis.
We proceed by choosing a nodal basis for the space $V_h$ and then approximating the integrals in \eqref{eq:weak} and \eqref{eq:strong} with collocated quadrature rules.
Typically, Gauss-Legendre or Gauss-Lobatto nodes are chosen for the basis functions.
In this work, we will solely make use of Gauss-Lobatto nodes and quadrature.

Due to the use of tensor-product basis and quadrature, the DG operators possess a Kronecker-product structure~\cite{Pazner2018e}.
Since the nodal points and quadrature points are collocated, there is no need for an interpolation operator.
The one-dimensional mass matrix on the reference element, denoted $\M_\oned$, is given by a diagonal matrix with quadrature weights on the diagonal.
The weighted one-dimensional differentiation matrix $\D_\oned$ is obtained by evaluating the derivatives of the basis functions at the nodal points and multiplying by quadrature weights,
\begin{equation}
   (\D_\oned)_{ij} = w_i \phi_j'(\xi_i).
\end{equation}
We will often make use of the following two simple properties of the differentiation matrix:

\begin{prop}
The weighted differentiation matrix satisfies the following two useful properties, which we will make use of extensively in this work:
\begin{align}
   \tag{P1} \label{eq:zero-sum} \sum_j (\D_\oned)_{ij} &= 0 \qquad\text{for all $i$}, \\
   \tag{P2} \label{eq:sbp} \sum_i (\D_\oned)_{ij} &= \begin{cases} -1, & j = 1 \\ 1, & j = p+1 \\ 0 & \text{otherwise} \end{cases}
\end{align}
\begin{itemize}
   \item The property \eqref{eq:zero-sum} ensures that the resulting method is conservative, and follows from $\sum_j \phi_j \equiv 1$.
   \item The property \eqref{eq:sbp} is know as the summation-by-parts (SBP) property, and is a consequence of the accuracy of the Gauss-Lobatto quadrature for polynomials of degree $2p-1$.
\end{itemize}
\end{prop}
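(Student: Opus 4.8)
The plan is to treat the two identities separately, relying on the partition-of-unity property of the nodal Lagrange basis for \eqref{eq:zero-sum} and on the exactness of Gauss-Lobatto quadrature for \eqref{eq:sbp}.

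For \eqref{eq:zero-sum}, I would begin from the definition $(\D_\oned)_{ij} = w_i \phi_j'(\xi_i)$ and factor out the quadrature weight to write $\sum_j (\D_\oned)_{ij} = w_i \sum_j \phi_j'(\xi_i)$. The key observation is that the nodal basis reproduces polynomials of degree at most $p$ exactly, and in particular interpolates the constant function, so $\sum_j \phi_j \equiv 1$ on $[0,1]$. Differentiating this identity gives $\sum_j \phi_j' \equiv 0$ identically, and evaluating at any node $\xi_i$ yields $\sum_j (\D_\oned)_{ij} = 0$. This step is essentially immediate once the partition-of-unity identity is recorded.

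For \eqref{eq:sbp}, I would reinterpret the column sum as a quadrature rule. Fixing $j$, the sum $\sum_i (\D_\oned)_{ij} = \sum_i w_i \phi_j'(\xi_i)$ is precisely the Gauss-Lobatto approximation of $\int_0^1 \phi_j'(x)\,dx$. Since $\phi_j \in \Q_p([0,1])$ has degree $p$, its derivative $\phi_j'$ has degree $p-1 \le 2p-1$, so the quadrature is exact for this integrand. I would then invoke the fundamental theorem of calculus to obtain $\sum_i w_i \phi_j'(\xi_i) = \phi_j(1) - \phi_j(0)$. Because the Gauss-Lobatto nodes include the endpoints, with $\xi_1 = 0$ and $\xi_{p+1} = 1$, the nodal interpolation property gives $\phi_j(0) = \delta_{j,1}$ and $\phi_j(1) = \delta_{j,p+1}$, so the column sum reduces to $\delta_{j,p+1} - \delta_{j,1}$, which is exactly the claimed piecewise expression.

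The point requiring the most care is the degree count underlying \eqref{eq:sbp}: the exactness of the $(p+1)$-point Gauss-Lobatto rule holds only up to degree $2p-1$, and the argument succeeds precisely because differentiation lowers the polynomial degree so that $\phi_j'$ falls safely within this range. Equally essential is that the endpoints appear among the quadrature nodes; this is the distinguishing feature of the Gauss-Lobatto rule (as opposed to Gauss-Legendre) that makes the boundary contributions collapse into Kronecker deltas, and it is what ties the summation-by-parts structure directly to the inclusion of the element boundary points in the nodal set.
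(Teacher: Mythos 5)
Your proof is correct and takes essentially the same route as the paper, which only sketches the argument in its two bullet points: \eqref{eq:zero-sum} from differentiating the partition-of-unity identity $\sum_j \phi_j \equiv 1$, and \eqref{eq:sbp} from the exactness of the $(p+1)$-point Gauss-Lobatto rule for $\phi_j'$ (degree $p-1 \leq 2p-1$) combined with the fundamental theorem of calculus and the endpoint interpolation property. You have simply filled in the details the paper leaves implicit; there is no substantive difference in approach.
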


On the reference element, the local mass and differentiation operators can be obtained through Kronecker products.
For instance, for $d=2$ we have
\begin{equation} \label{eq:kronecker}
   \M_\Ref = \M_\oned \otimes \M_\oned, \qquad
   \D_{1,\Ref} = \M_\oned \otimes \D_\oned, \qquad
   \D_{2, \Ref} = \D_\oned \otimes \M_\oned.
\end{equation}
We can also define the left endpoint evaluation matrix $\B_{0,\oned}$, which is zero except for the first entry of the diagonal, which takes value one, and likewise the right endpoint evaluation matrix $\B_{1,\oned}$, which is zero except for the last entry of the diagonal.
Given these definitions, integrals over the boundary of the reference element $\partial\Ref$ can be computed using Kronecker products,
\begin{equation}
   \begin{aligned}
      \B_{x=0,\Ref} = \M_\oned \otimes \B_{0,\oned}, &&
      \B_{x=1,\Ref} = \M_\oned \otimes \B_{1,\oned}, \\
      \B_{y=0,\Ref} = \B_{0,\oned} \otimes \M_\oned, &&
      \B_{y=1,\Ref} = \B_{1,\oned} \otimes \M_\oned.
   \end{aligned}
\end{equation}

\subsection{Metric terms and transformed operators}

Having defined the mass, differentiation, and boundary operators on the reference element as above, we now wish to transform these operators to act in physical coordinates.
Consider a fixed element $K\in\T_h$ with transformation mapping $T_K$ and Jacobian $J$.
Let $\J$ denote the values of the Jacobian matrix of $T_K$ evaluated at the Gauss-Lobatto nodal points.
These terms are evaluated using the freestream-preserving procedure descried in \cite{Kopriva2006}.
Let $\G$ denote the diagonal matrix whose entries are given by $\det(\J)$.
Similarly, let $\Jinv_{ij}$ denote the ($d\times d$ block) diagonal matrix whose entries are given by $\det(\J) \J^{-1}_{ij}$.
Notice that the elemental mass matrix corresponding to the element $K$ is given by $\M_K = \M_\Ref \G$.

Let $\uu$ denote the vector of coefficients (i.e.\ nodal values) of $\bm u_h$ on $K$ and let $\FF$ denote the vector of values of $\bm F(\bm u_h)$ evaluated at the nodal points (i.e.\ $\FF = \bm F(\uu)$).
For each face $e \in \partial K$, let $\FF_e\cdot\n$ denote the values of $\bm F(\bm u_h)$ evaluated at the $(d-1)$-dimensional Gauss-Lobatto nodes on face $e$ (where the trace of $\bm u_h$ is taken from within $K$), dotted with the scaled normal vector $\n$ facing outwards from $e$. Likewise $\FFhat_e$ denotes the nodal values of $\Fhat(\bm u_h^-, \bm u_h^+, \bm n^-)$.

The contravariant fluxes defined by \eqref{eq:contravariant} are given by
\begin{equation}
     \tilde{\FF}_i = \sum_{j=1}^d \Jinv_{ij} \FF_j.
\end{equation}
Therefore, the term $\int_K \tilde{\bm F} : \nabla_h \bm v_h \, d\bm x$ in the weak formulation is discretized as
\begin{equation}
     \sum_{i=1}^d \D_{i,\Ref}^\tr \tilde{\FF}_i = \sum_{i=1}^d \D_{i,\Ref}^\tr \sum_{j=1}^d \Jinv_{ij} \FF_j,
\end{equation}
and the divergence term $\int_K (\nabla \cdot \tilde{\bm F} ) \cdot \bm v_h \, d\bm x$ is discretized as
\begin{equation}
     \sum_{i=1}^d \D_{i,\Ref} \tilde{\FF}_i = \sum_{i=1}^d \D_{i,\Ref} \sum_{j=1}^d \Jinv_{ij} \FF_j.
\end{equation}
The weak form \eqref{eq:weak} on element $K$ can therefore be written
\begin{equation} \label{eq:dg-weak}
     \M_K \uu_t
     - \sum_{i=1}^d \D_{i,\Ref}^\tr \sum_{j=1}^d \Jinv_{ij} \FF_j
     + \sum_{e\in\partial K} \B_{e,\Ref} \FFhat_e = 0.
\end{equation}
Similarly, the strong form \eqref{eq:strong} is given by
\begin{equation} \label{eq:dg-strong}
     \M_K \uu_t
     + \sum_{i=1}^d \D_{i,\Ref} \sum_{j=1}^d \Jinv_{ij} \FF_j
     + \sum_{e\in\partial K} \B_{e,\Ref} (\FFhat_e - \FF_e\cdot\n) = 0.
\end{equation}

\subsection{Conservation and constant preservation}
\label{sec:conservation}

The governing equation \eqref{eq:cons-law} satisfies the following two simple properties:
\begin{itemize}
     \item \textbf{(Conservation).}
     Assuming periodic or compactly supported boundary conditions, $\int_\Omega \frac{\partial \bm u}{\partial t} \, d\bm x = 0$.
     \item \textbf{(Constant preservation).}
     If $\bm F(\bm u)$ is spatially constant, then $\frac{\partial \bm u}{\partial t} = 0$.
     In particular, if $\bm F$ depends only on $\bm u$ (and not, for example, on the spatial variable $\bm x$), then if $\bm u$ being spatially constant implies that $\frac{\partial \bm u}{\partial t} = 0$.
\end{itemize}
We would like the discretization to satisfy the analogous properties at the discrete level.

\subsubsection{Conservation}
First, we consider conservation.
The analogous statement at the discrete level is that
\begin{equation}
     \sum_{K\in\T_h} \mathtt{1}^\tr \M_K \uu_t = 0,
\end{equation}
where $\mathtt{1}$ is a vector of all ones.
First, note that since the numerical flux function $\Fhat$ is single-valued, we have
\begin{equation}
     \sum_{K\in\T_h}\sum_{e\in\partial K} \mathtt{1}^\tr \B_{e,\Ref} \FFhat_e = 0.
\end{equation}
The remaining terms in both the strong form and weak form are local to each element and do not involve contributions from face neighbors.
In the weak form \eqref{eq:dg-weak}, we see that $\mathtt{1}^\tr D_{i,\Ref}^\tr = 0$ by property \eqref{eq:zero-sum}.
We can then conclude that
\begin{equation}
     \sum_{K\in\T_h}\mathtt{1}^\tr \M_K \uu_t
     = \sum_{K\in\T_h} \left( \sum_{i=1}^d \mathtt{1}^\tr \D_{i,\Ref}^\tr \sum_{j=1}^d \Jinv_{ij} \FF_j
     - \sum_{e\in\partial K} \mathtt{1}^\tr \B_{e,\Ref} \FFhat_e \right) = 0,
\end{equation}
proving the conservation property for the discretized weak form.

For the discretized strong form, we make use of property \eqref{eq:sbp}.
For any $K\in\T_h$
\begin{equation} \label{eq:sbp-strong}
     \mathtt{1}^\tr \sum_{i=1}^d \D_{i,\Ref} \tilde{\FF}_i
     = \mathtt{1}^\tr \sum_{e \in \partial K} \B_{e,\Ref} \FF_e\cdot\n,
\end{equation}
and we therefore obtain conservation for the strong form as well:
\begin{equation}
     \sum_{K\in\T_h}\mathtt{1}^\tr \M_K \uu_t
     = - \sum_{K\in\T_h} \left( \sum_{i=1}^d \mathtt{1}^\tr \D_{i,\Ref} \sum_{j=1}^d \Jinv_{ij} \FF_j
     - \sum_{e\in\partial K} \mathtt{1}^\tr \B_{e,\Ref} \left(\FFhat_e - \FF_e\cdot\n \right) \right) = 0.
\end{equation}

We summarize the above arguments in the following proposition:
\begin{prop}
  Let $\uu_t$ satisfy either the discretized weak form \eqref{eq:dg-weak} or strong form \eqref{eq:dg-strong}.
  Then, the following conservation property holds:
  \begin{equation}
    \sum_{K\in\T_h} \mathtt{1}^\tr \M_K \uu_t = 0.
  \end{equation}
\end{prop}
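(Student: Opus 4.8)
The plan is to verify the conservation identity $\sum_{K}\mathtt{1}^\tr\M_K\uu_t=0$ separately for the two discretizations, in each case reducing the left-hand side to a sum of volume and surface contributions and then showing that each contribution vanishes. Throughout I would left-multiply the semidiscrete equations \eqref{eq:dg-weak} and \eqref{eq:dg-strong} by $\mathtt{1}^\tr$ and sum over all elements $K\in\T_h$. The essential observation, valid for both forms, is that the interior face flux $\FFhat_e$ is single-valued across the shared interface between neighbouring elements; consequently the contributions from $\sum_K\sum_{e\in\partial K}\mathtt{1}^\tr\B_{e,\Ref}\FFhat_e$ cancel in pairs, leaving only the volume terms to handle elementwise. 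I would state this telescoping of the numerical flux as the first step, since it is common to both cases.

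For the weak form \eqref{eq:dg-weak}, the volume term is $\sum_{i=1}^d\mathtt{1}^\tr\D_{i,\Ref}^\tr\sum_{j=1}^d\Jinv_{ij}\FF_j$. Here I would invoke property \eqref{eq:zero-sum}: since $\mathtt{1}^\tr\D_\oned^\tr=\mathbf{0}$ column-by-column (the rows of $\D_\oned$ sum to zero), the Kronecker-product structure \eqref{eq:kronecker} gives $\mathtt{1}^\tr\D_{i,\Ref}^\tr=\mathbf{0}$ for each $i$, so the entire volume term annihilates regardless of the metric factors $\Jinv_{ij}$ and fluxes $\FF_j$. Combined with the face cancellation this yields the claim for the weak form immediately.

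For the strong form \eqref{eq:dg-strong} the volume term is instead $\sum_{i=1}^d\mathtt{1}^\tr\D_{i,\Ref}\sum_{j=1}^d\Jinv_{ij}\FF_j=\mathtt{1}^\tr\sum_i\D_{i,\Ref}\tilde\FF_i$, which does not vanish on its own; this is where property \eqref{eq:sbp} enters. The identity \eqref{eq:sbp-strong} already supplied in the excerpt asserts exactly that this volume term equals the boundary term $\mathtt{1}^\tr\sum_{e\in\partial K}\B_{e,\Ref}\FF_e\cdot\n$, i.e.\ the SBP property converts the interior divergence into a surface integral of the \emph{physical} flux. I would therefore substitute \eqref{eq:sbp-strong} so that the elementwise contribution becomes $\mathtt{1}^\tr\sum_{e\in\partial K}\B_{e,\Ref}(\FFhat_e-\FF_e\cdot\n)+\mathtt{1}^\tr\sum_{e\in\partial K}\B_{e,\Ref}\FF_e\cdot\n=\mathtt{1}^\tr\sum_{e\in\partial K}\B_{e,\Ref}\FFhat_e$, reducing the strong-form case to the same single-valued-flux cancellation used above.

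The main obstacle, such as it is, lies in justifying \eqref{eq:sbp-strong} from \eqref{eq:sbp} in the transformed, multidimensional setting: one must check that summing the weighted differentiation matrix against $\mathtt{1}^\tr$ and accounting for the Kronecker factors reproduces precisely the boundary evaluation operators $\B_{e,\Ref}$ acting on the contravariant (scaled-normal) fluxes. This requires that the metric terms $\Jinv_{ij}$ be handled consistently — in particular that the freestream-preserving construction of \cite{Kopriva2006} makes the discrete metric identities hold so that $\mathtt{1}^\tr\sum_i\D_{i,\Ref}\Jinv_{ij}$ collapses to boundary terms. Since \eqref{eq:sbp-strong} is already asserted in the excerpt, I would treat this reduction as given and keep the proof at the level of assembling the two cases; if a self-contained derivation were wanted, the one-dimensional SBP relation \eqref{eq:sbp} would be lifted to each coordinate direction via the tensor-product identities \eqref{eq:kronecker} and then the normal fluxes on each of the $2d$ faces identified with the corresponding $\B_{e,\Ref}$.
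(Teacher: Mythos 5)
Your proposal is correct and follows essentially the same route as the paper: global cancellation of the single-valued numerical fluxes $\FFhat_e$, property \eqref{eq:zero-sum} to annihilate the weak-form volume term via $\mathtt{1}^\tr\D_{i,\Ref}^\tr=0$, and the SBP identity \eqref{eq:sbp-strong} to convert the strong-form volume term into the physical boundary flux $\FF_e\cdot\n$, which then cancels against the face terms. One minor clarification: justifying \eqref{eq:sbp-strong} needs only the column-sum property \eqref{eq:sbp} together with consistency of the face-scaled normals with the volume metric terms ($\n = \det(\J)\,\J^{-\tr}\hat{\bm n}$ at face nodes), not the metric identities \eqref{eq:metric}, which are what constant preservation (rather than conservation) requires.
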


\subsubsection{Constant preservation}

Now we turn to the property of constant preservation.
We supposed that the flux $\bf F$ is spatially constant, and therefore $\Fhat$ is also everywhere equal to the same constant.
As a result, this property is easily proven for the strong form of the discretization: $\FFhat_e - \FF_e\cdot\n = 0$, and so the third term on the left-hand side of \eqref{eq:dg-strong} is zero.
Therefore, the discretization will preserve constants if the following identity holds:
\begin{equation} \label{eq:metric}
     \sum_{i=1}^d \D_{i,\Ref} \Jinv_{ij} = 0 \quad \text{for all $j$.}
\end{equation}
This so-called \emph{metric identity} is discussed in detail in \cite{Kopriva2006}, and can be enforced through proper evaluation of the entries of $\Jinv$.
In this case, we have the following result:
\begin{prop}
  Let $\uu_t$ satisfy the discretized strong form \eqref{eq:dg-strong}.
  Furthermore, suppose that $\bm F(\bm u_h)$ is spatially constant.
  Then, $\uu_t = 0$.
\end{prop}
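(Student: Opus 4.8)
The plan is to work on a single element $K \in \T_h$ and show that both spatial terms on the left-hand side of \eqref{eq:dg-strong} vanish, leaving $\M_K \uu_t = 0$. Since $\M_K = \M_\Ref \G$ is diagonal with strictly positive entries (positive Gauss--Lobatto quadrature weights multiplied by positive Jacobian determinants), it is invertible, and the conclusion $\uu_t = 0$ then follows immediately.

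First I would dispose of the interface term. By consistency of the Lax--Friedrichs flux \eqref{eq:lax-friedrichs}, when $\bm F(\bm u_h)$ is spatially constant the numerical flux on each face reduces to that same constant value (the jump-dissipation contribution dropping out), so that $\FFhat_e = \FF_e\cdot\n$ and hence $\FFhat_e - \FF_e\cdot\n = 0$ for every $e \in \partial K$. The boundary sum $\sum_{e\in\partial K}\B_{e,\Ref}(\FFhat_e - \FF_e\cdot\n)$ therefore disappears, and \eqref{eq:dg-strong} collapses to
\begin{equation*}
  \M_K \uu_t + \sum_{i=1}^d \D_{i,\Ref}\sum_{j=1}^d \Jinv_{ij}\FF_j = 0.
\end{equation*}

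The heart of the argument is to show that the remaining divergence term vanishes, and this is the step I expect to be the main obstacle. The key point is that spatial constancy of the physical flux does \emph{not} make the contravariant flux $\tilde{\FF}_i = \sum_j \Jinv_{ij}\FF_j$ constant, because on a curved (non-affine) element the metric factors $\det(\J)\J^{-1}_{ij}$ vary from node to node; one therefore cannot simply invoke property \eqref{eq:zero-sum}. Instead I would use that each $\FF_j$ is a node-independent constant vector, writing $\FF_j = c_j \mathtt{1}$ (in each solution component), so that the diagonal matrix $\Jinv_{ij}$ acts by $\Jinv_{ij}\FF_j = c_j\,\Jinv_{ij}\mathtt{1}$, i.e.\ $c_j$ times the vector of nodal metric values. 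Interchanging the order of summation then gives
\begin{equation*}
  \sum_{i=1}^d \D_{i,\Ref}\sum_{j=1}^d \Jinv_{ij}\FF_j
  = \sum_{j=1}^d c_j \left( \sum_{i=1}^d \D_{i,\Ref}\,\Jinv_{ij}\mathtt{1} \right),
\end{equation*}
and the inner parenthesized quantity is precisely the discrete metric (geometric conservation law) identity \eqref{eq:metric}, which vanishes for each $j$ by the freestream-preserving construction of the metric terms. Hence the divergence term is zero.

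Combining the two steps leaves $\M_K \uu_t = 0$ on every element, and invertibility of $\M_K$ yields $\uu_t = 0$. The only genuine subtlety is the one flagged above: the identity \eqref{eq:metric} must be understood as a statement about the discrete divergence of the nodal metric vector (equivalently, \eqref{eq:metric} acting on $\mathtt{1}$), since it is exactly the cancellation of the spurious discrete divergence of the geometric terms --- and not any constancy of the transformed flux --- that delivers constant preservation on deformed meshes.
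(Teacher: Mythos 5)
Your proof is correct and follows essentially the same route as the paper: the face term vanishes because consistency of the numerical flux gives $\FFhat_e - \FF_e\cdot\n = 0$, and the volume term vanishes by the discrete metric identity \eqref{eq:metric} enforced via the freestream-preserving evaluation of the metric terms. Your elaborations --- factoring the constant nodal values out of the diagonal metric matrices, reading \eqref{eq:metric} as a statement about its action on $\mathtt{1}$, and invoking invertibility of $\M_K$ --- merely make explicit what the paper's terser argument leaves implicit.
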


On the other hand, the discretized weak form given by \eqref{eq:dg-weak} will not in general satisfy the constant preservation property (e.g.\ on curved meshes).
For this reason, in the remainder of this paper, we make use of the strong form discretization \eqref{eq:dg-strong} to obtain the unlimited {\it target scheme}.

\section{Construction of the IDP low-order method}
\label{sec:idp}

We now modify the discretization \eqref{eq:dg-strong} with the goal of obtaining a method which is \emph{invariant domain preserving} (IDP).
In Guermond and Popov \cite{Guermond2016} this is done by adding a \emph{graph viscosity} term that is based on the \emph{guaranteed maximum speed} (GMS) of the hyperbolic system.
One potential issue with this approach is that the amount of graph viscosity added to the discretization increases as the size of the discrete stencil increases.
As a result, applying this technique to high-order methods with large stencils results in very dissipative methods and typically poor-quality results, as observed in \cite{Lohmann2017}, and further illustrated in Section \ref{sec:unsparsified}.
In order to address this issue, we are interested in creating an IDP discretization that is compatible in a certain sense with \eqref{eq:dg-strong}, yet based on a more compact stencil.
Because the graph viscosity term is itself first-order accurate, the underlying sparse discretization is not required to be high-order accurate.

We make the following simple modification to the DG-SEM method described above.
Notice that the wide stencil of the high-order method is a consequence of the fact that the one-dimensional differentiation matrix $\D_\oned$ is dense.
We therefore replace $\D_\oned$ with a sparser version $\Didp_\oned$ that is first-order accurate.
$\Didp_\oned^\tr$ is obtained by integrating the derivatives of piecewise linear basis functions on the mesh defined by the Gauss-Lobatto points in the interval $[0,1]$.
$\Didp_\oned$ is therefore given by
\begin{equation} \label{eq:didp}
     \Didp_\oned
     = \left(\begin{array}{ccccccc}
          -\frac{1}{2} & \frac{1}{2} & 0 & 0 & 0 & \cdots & 0 \\
          -\frac{1}{2} & 0 & \frac{1}{2} & 0 & 0 & \cdots & 0 \\
          0 & -\frac{1}{2} & 0 & \frac{1}{2} & 0 & \cdots & 0 \\
          \vdots & \vdots & \vdots & \vdots & \vdots & \ddots & \vdots
     \end{array}\right).
\end{equation}
Then, for each $i=1,\ldots,d$, we construct operators $\Didp_{i,\Ref}$ by replacing $\D_\oned$ with $\Didp_\oned$ in the definition \eqref{eq:kronecker}.
It is easy to see from \eqref{eq:didp} that $\Didp$ also satisfies the row and column-sum properties \eqref{eq:zero-sum} and \eqref{eq:sbp}.
The modified operators $\Didp_{i,\Ref}$ then replace the standard DG-SEM derivative operators $\D_{i,\Ref}$ in the formulation \eqref{eq:dg-weak}, in order to obtain the following modified discretization:
\begin{equation} \label{eq:dg-lo}
     \M_K \uu_t
     - \sum_{i=1}^d \Didp_{i,\Ref}^\tr \sum_{j=1}^d \Jinv_{ij} \FF_j
     + \sum_{e\in\partial K} \B_{e,\Ref} \FFhat_e = 0.
\end{equation}

\subsection{Conservative correction} \label{sec:correction}

For our purposes, it is important to ensure that the modified formulation \eqref{eq:dg-lo} satisfies the conservation and constant preservation properties described in section \ref{sec:conservation}.
Because \eqref{eq:dg-weak} is based on the weak formulation, the conservation property follows immediately from the zero-sum property \eqref{eq:zero-sum} (and the fact that the numerical fluxes $\Fhat$ are unique).
However, the constant preservation property will not hold in general.
The reason for this is that the modified differentiation matrix cannot differentiate exactly the metric terms in $\Jinv$, and therefore the modified discrete metric identities no longer hold.

To remedy this issue, we make use of modified metric terms $\Jinvidp$ which are $\O(h)$ perturbations of the high-order metric terms $\Jinv$, but are designed such that the first-order discretization satisfies the metric identities.
Since the modified low-order method is itself only first-order accurate, using an $\O(h)$ perturbation of the metric terms is acceptable.

The modified metric terms $\Jinvidp$ are constructed as follows.
We set $\Jinvidp = \Jinv + \mathtt{C}$, where $\mathtt{C}$ are block diagonal correction matrices.
For each element, we would like to enforce the identity
\begin{equation} \label{eq:idp-sbp}
     \sum_{i=1}^d \Didp_{i,\Ref}^\tr \Jinvidp_{ij} \mathtt{1}
     = \sum_{e\in\partial K} \B_{e,\Ref} \n_{e,j} \mathtt{1} \qquad \text{for $1 \leq j \leq d$},
\end{equation}
where $\n_{e,j}$ denotes the $j$th component of the outward-facing normal at the Gauss-Lobatto points of face $e$.
This is equivalent to the following underdetermined system of equations:
\begin{equation}
     \left(\begin{array}{ccc}
          \Didp_{1,\Ref}^\tr & \cdots \Didp_{d,\Ref}^\tr
     \end{array}\right)
     \left(\begin{array}{c}
          \Jinvidp_{1j} \mathtt{1} \\
          \vdots \\
          \Jinvidp_{dj} \mathtt{1}
     \end{array}\right)
     = \sum_{e \in \partial K} \B_{e, \Ref} \n_{e,j} \mathtt{1}.
\end{equation}
Solving for the correction matrices $\mathtt{C}$, we obtain the system
\begin{equation} \label{eq:metric-perturbation}
    \left(\begin{array}{ccc}
        \Didp_{1,\Ref}^\tr & \cdots \Didp_{d,\Ref}^\tr
    \end{array}\right)
    \left(\begin{array}{c}
        \mathtt{C}_{1j} \mathtt{1} \\
        \vdots \\
        \mathtt{C}_{dj} \mathtt{1}
    \end{array}\right)
    = \sum_{e \in \partial K} \B_{e, \Ref} \n_{e,j} \mathtt{1}
    - \left(\begin{array}{ccc}
        \Didp_{1,\Ref}^\tr & \cdots \Didp_{d,\Ref}^\tr
    \end{array}\right)
    \left(\begin{array}{c}
        \Jinv_{1j} \mathtt{1} \\
        \vdots \\
        \Jinv_{dj} \mathtt{1}
    \end{array}\right).
\end{equation}
Note that the differentiation matrix on the left-hand side has a null-space consisting of all constant functions.
In order for a solution to exist, the right-hand side must be orthogonal to this null-space.
In other words, the entries of the right-hand side vector must sum to zero.
This can be seen to be true for the second term on the right-hand side by a simple application of the zero-sum property \eqref{eq:zero-sum}.
In order to show this property for the first term on the right-hand side, we make use of the summation-by-parts property \eqref{eq:sbp-strong}:
\begin{equation}
    \mathtt{1}^\tr \sum_{e \in \partial K} \B_{e,\Ref} \n_{e,j}
    = \mathtt{1}^\tr \sum_{i=1}^d \D_{i,\Ref} \Jinv_{ij}.
\end{equation}
Then, the metric identity \eqref{eq:metric} implies that this term is equal to zero.
Therefore a solution to \eqref{eq:metric-perturbation} exists.
On each element, the minimum norm solution to \eqref{eq:metric-perturbation} is computed for the perturbed metric terms.
This can be performed using e.g.\ the singular value decomposition, which is performed once as an offline precomputation on the reference element, and then simply reused for each element $K\in\T_h$.
The right-hand side (as well as the metric terms themselves) scale as $\O(h)$, and therefore the entrywise error satisfies $| \Jinvidp - \Jinv | = \O(h)$.

Using the above procedure to construct the modified metric terms, we obtain the low-order discretization
\begin{equation} \label{eq:dg-lo-modified}
     \M_K \uu_t
     - \sum_{i=1}^d \Didp_{i,\Ref}^\tr \sum_{j=1}^d \Jinvidp_{ij} \FF_j
     + \sum_{e\in\partial K} \B_{e,\Ref} \FFhat_e = 0.
\end{equation}
As a consequence of the choice of metric terms, this discretization satisfies the following properties.
\begin{prop}
  Let $\uu_t$ satisfy \eqref{eq:dg-lo-modified}. Then, the following two properties hold:
  \begin{itemize}
    \item Conservation:
    $\sum_{K\in\T_h} \mathtt{1}^\tr \M_K \uu_t = 0.$
    \item Constant preserving: if $\bm F(\bm u_h)$ is spatially constant then $\uu_t = 0$.
  \end{itemize}
\end{prop}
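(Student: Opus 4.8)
The plan is to establish the two properties separately, exploiting the fact that \eqref{eq:dg-lo-modified} differs from the weak form \eqref{eq:dg-weak} only through the substitutions $\D_{i,\Ref}\to\Didp_{i,\Ref}$ and $\Jinv\to\Jinvidp$. Since the conservation argument given for \eqref{eq:dg-weak} never invokes the specific values of the metric terms, I expect it to carry over almost verbatim, whereas constant preservation is the genuinely new content and will rest on the metric identity \eqref{eq:idp-sbp} that was imposed in the construction of $\Jinvidp$.

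For conservation I would left-multiply \eqref{eq:dg-lo-modified} by $\mathtt{1}^\tr$ and sum over $K\in\T_h$. The volume term vanishes element-by-element because $\mathtt{1}^\tr\Didp_{i,\Ref}^\tr = (\Didp_{i,\Ref}\mathtt{1})^\tr = 0$: through the Kronecker structure \eqref{eq:kronecker} with $\D_\oned$ replaced by $\Didp_\oned$, this reduces to $\Didp_\oned\mathtt{1}=0$, i.e. the zero-sum property \eqref{eq:zero-sum}, which $\Didp_\oned$ inherits (as noted immediately after its definition). The boundary term vanishes upon summation over elements because $\Fhat$ is single-valued, exactly as in the cancellation $\sum_{K}\sum_{e\in\partial K}\mathtt{1}^\tr\B_{e,\Ref}\FFhat_e=0$ already used for the weak form. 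Crucially, neither step touches the metric terms, so the replacement $\Jinv\to\Jinvidp$ causes no difficulty.

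For constant preservation I would assume $\bm u_h$, and hence $\bm F(\bm u_h)$, is spatially constant, so that each nodal flux vector satisfies $\FF_j=f_j\mathtt{1}$ for scalars $f_j$, and with the Lax--Friedrichs flux \eqref{eq:lax-friedrichs} the jump term $-\frac{\lambda}{2}(\bm u_h^+-\bm u_h^-)$ drops out, leaving $\FFhat_e=\sum_j f_j\,\n_{e,j}\mathtt{1}$ on each face. The volume term of \eqref{eq:dg-lo-modified} then becomes $\sum_j f_j\sum_i\Didp_{i,\Ref}^\tr\Jinvidp_{ij}\mathtt{1}$, and applying the metric identity \eqref{eq:idp-sbp}—enforced precisely for this purpose—rewrites it as $\sum_j f_j\sum_{e\in\partial K}\B_{e,\Ref}\n_{e,j}\mathtt{1}$. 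This coincides exactly with the boundary term $\sum_{e\in\partial K}\B_{e,\Ref}\FFhat_e$, so the two cancel in \eqref{eq:dg-lo-modified} and $\M_K\uu_t=0$. Since $\M_K=\M_\Ref\G$ is diagonal with positive entries, hence invertible, I conclude $\uu_t=0$.

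The conservation half is routine, so I expect the only real content to be the constant-preservation half, where the whole argument hinges on invoking \eqref{eq:idp-sbp} with the correct bookkeeping of the diagonal matrices $\Jinvidp_{ij}$ against the constant flux components, together with the fact that the Lax--Friedrichs flux collapses to the central flux $\bm F\cdot\n$ when $\bm u_h^-=\bm u_h^+$. This last point is the reason the hypothesis is phrased in terms of spatial constancy of $\bm u_h$ (equivalently $\bm F(\bm u_h)$), mirroring the continuous-level discussion: without constancy of $\bm u_h$ the dissipative jump term would survive and the identity would not close.
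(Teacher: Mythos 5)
Your proof is correct and takes essentially the same route the paper intends: conservation follows from the inherited zero-sum property \eqref{eq:zero-sum} of $\Didp_\oned$ together with the single-valuedness of $\FFhat$, and constant preservation follows from the identity \eqref{eq:idp-sbp} that the correction procedure of Section \ref{sec:correction} was designed to enforce, so that the volume term collapses onto the boundary term and cancels it. If anything, your treatment is slightly more careful than the paper's, since you note explicitly that the Lax--Friedrichs dissipation term only drops out when $\bm u_h$ itself (not merely $\bm F(\bm u_h)$) is spatially constant, a point the paper glosses over in its constant-preservation discussion.
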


Next, the discretization \eqref{eq:dg-lo} is modified to render the resulting method invariant domain preserving (IDP).
Because of the local nature of the DG method, and because of the choice of Lax-Friedrichs numerical flux (cf.\ Section \ref{sec:numerical-flux}), it is possible to perform this modification in an entirely local fashion.
First, we rewrite \eqref{eq:dg-lo} in a form that will be more convenient for our purposes.
Note that by definition of the Lax-Friedrichs flux (equation \eqref{eq:lax-friedrichs}), the term of the form $\sum_{e\in\partial K} \B_{e,\Ref} \FFhat_e$ can be written as
\begin{equation} \label{eq:lf-alternative}
    \frac{n_f(i)}{2} \wn_{ii} \cdot \FF_i +
    \sum_{j\in\mathcal{B}(i) \setminus \{i\}} \left( \frac{1}{2} \wn_{ij} \cdot \FF_j - \| \wn_{ij} \|_{\ell^2} \frac{\lambda}{2} (\uu_j - \uu_i)  \right),
\end{equation}
where $n_f(i)$ is the number of faces on which the $i$th node lies (i.e.\ depending on if $i$ is an interior node or if it lies on a face, edge, or corner of the element), $\wn_{ii}$ and $\wn_{ij}$ are weighted normal vectors, and $\mathcal{B}(i)$ is the index set consisting of all nodes $j$ that are face neighbors of $i$.
The vector $\wn_{ij}$ is equal to the normal vector evaluated at node $j$, pointing outwards from the element to which node $i$ belongs, weighted by the element of surface area and face quadrature weight.
If the node $i$ does not lie on an element face, then $n_f(i) = 0$ and $\mathcal{B}(i) = \varnothing$, and in this case we leave $\wn_{ij}$ undefined.
Inserting this expression into \eqref{eq:dg-lo} and writing the volume terms in terms of coefficients $\mathtt{c}_{ij}$, we obtain the following equivalent formulation:
\begin{equation} \label{eq:dg-lo-cij}
    \mathtt{m}_i \partial_t \uu_i
    + \frac{n_f(i)}{2} \wn_{ii} \cdot \FF_i
    - \sum_{j \in \mathcal{E}(i)} \bm{\mathtt{c}}_{ij} \cdot \FF_j
    + \sum_{j\in\mathcal{B}(i) \setminus \{i\}} \left( \frac{1}{2} \wn_{ij} \cdot \FF_j - \| \wn_{ij} \|_{\ell^2} \frac{\lambda}{2} (\uu_j - \uu_i)  \right) = 0,
\end{equation}
where $\mathtt{m}_i$ is the $i$th diagonal entry of the mass matrix, and $\mathcal{E}(i)$ is the set of all indices in the stencil of $i$ within the same element.
For any $j \in \mathcal{E}(i)$, we define the symmetric \emph{graph viscosity coefficients} $\mathtt{d}_{ij}$ by
\begin{equation} \label{eq:dij}
    \mathtt{d}_{ij} = \max\left\{
    \lambda_{\max}(\uu_i, \uu_j, \mathtt{n}_{ij}) \| \mathtt{c}_{ij} \|_{\ell^2},
    \lambda_{\max}(\uu_j, \uu_i, \mathtt{n}_{ji}) \| \mathtt{c}_{ji} \|_{\ell^2}
    \right\},
\end{equation}
where $\mathtt{n}_{ij} = \mathtt{c}_{ij}/\| \mathtt{c}_{ij} \|_{\ell^2}$.
For convenience of notation, we will use the convention that $\mathtt{d}_{ii} = -\sum_{i\neq j\in\mathcal{E}(i)} \mathtt{d}_{ij}$.
Having defined the quantities $\mathtt{d}_{ij}$, the discretization \eqref{eq:dg-lo-cij} is modified with the addition of a viscosity term
\begin{equation} \label{eq:dg-graph-vis-long}
  \mathtt{m}_i \partial_t \uu_i
  + \frac{n_f(i)}{2} \wn_{ii} \cdot \FF_i
  - \sum_{j \in \mathcal{E}(i)} \bm{\mathtt{c}}_{ij} \cdot \FF_j
  + \sum_{j\in\mathcal{B}(i) \setminus \{i\}} \left( \frac{1}{2} \wn_{ij} \cdot \FF_j - \| \wn_{ij} \|_{\ell^2} \frac{\lambda}{2} (\uu_j - \uu_i)  \right)
  - \sum_{j \in \mathcal{E}(i)}\mathtt{d}_{ij} ( \uu_j - \uu_i ) = 0.
\end{equation}
The above expression is simplified by combining the volume and boundary terms.
Let $\mathcal{N}(i) = \mathcal{E}(i) \cup \mathcal{B}(i)$, and let $\hat{\mathtt{c}}_{ij} = \mathtt{c}_{ij} + \mathtt{b}_{ij}$, where $\mathtt{b}_{ij}$ denotes the corresponding coefficient of $\FF_j$ from the boundary terms.
Here we use the convention that $\mathtt{c}_{ij} = 0$ if $j \notin \mathcal{E}(i)$ and likewise $\mathtt{b}_{ij} = 0$ if $j \notin \mathcal{B}(i)$.
At this point, notice that the Lax-Friedrichs term may be combined with the graph viscosity by appropriately defining the viscosity coefficients.
Let $\hat{\mathtt{d}}_{ij}$ for $j \neq i$ be given by
\begin{equation}
  \hat{\mathtt{d}}_{ij} = \begin{cases}
    \mathtt{d}_{ij}, & \quad \text{if } j \in \mathcal{E}(i), \\
    \frac{1}{2} \lambda \| \wn_{ij} \|_{\ell^2}, & \quad \text{if } j \in \mathcal{B}(i).
  \end{cases}
\end{equation}
Note that by this definition is equivalent to replacing $\mathtt{c}_{ij}$ with $\hat{\mathtt{c}}_{ij}$ in \eqref{eq:dij}.
Using these definitions, \eqref{eq:dg-graph-vis-long} simplifies to
\begin{equation} \label{eq:dg-idp}
  \mathtt{m}_i \partial_t \uu_i
  - \sum_{j \in \mathcal{N}(i)} \hat{\mathtt{c}}_{ij} \cdot \FF_j
  - \sum_{j \in \mathcal{N}(i)} \hat{\mathtt{d}}_{ij} ( \uu_j - \uu_i ) = 0.
\end{equation}

\begin{prop}
  The discretization given by \eqref{eq:dg-idp} satisfies the following properties.
  \begin{itemize}
    \item Since the low-order discretization is constant preserving,
    $
      \sum_{j \in \mathcal{N}(i)} \hat{\mathtt{c}}_{ij} = 0.
    $
    \item By symmetry of the coefficients $\mathtt{d}_{ij}$, the graph viscosity contributions sum to zero:
    \[
      \sum_{i} \sum_{j \in \mathcal{N}(i)} \hat{\mathtt{d}}_{ij} (\uu_j - \uu_i) = 0.
    \]
  \end{itemize}
\end{prop}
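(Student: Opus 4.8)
The plan is to treat the two claims separately, recognizing the first as the nodal-coefficient restatement of the discrete metric identity \eqref{eq:idp-sbp}, and the second as a standard skew-symmetric cancellation argument.

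For the first identity, I would argue that $\sum_{j\in\mathcal{N}(i)} \hat{\mathtt{c}}_{ij} = 0$ is exactly the assertion that the combined volume-plus-central-flux divergence operator annihilates spatially constant flux fields. Since the map $\{\FF_j\} \mapsto \sum_{j} \hat{\mathtt{c}}_{ij}\cdot\FF_j$ is \emph{linear} in the nodal flux data, I would test it against an arbitrary constant field $\FF_j \equiv \bm G$ with $\bm G \in \R^d$, so that $\sum_j \hat{\mathtt{c}}_{ij}\cdot\FF_j = (\sum_j \hat{\mathtt{c}}_{ij})\cdot \bm G$; crucially, this avoids assuming that $\bm G$ is realizable as $\bm F$ of any physical state. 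Tracing the definition of $\hat{\mathtt{c}}_{ij}$ back through \eqref{eq:dg-lo-cij}, the volume part contributes the row-$i$ entry of $\sum_{k} \Didp_{k,\Ref}^\tr \Jinvidp_{kj}\mathtt{1}$ while the boundary (central Lax--Friedrichs) part contributes $\sum_{e\in\partial K}\B_{e,\Ref}\n_{e,j}\mathtt{1}$; the metric identity \eqref{eq:idp-sbp} asserts these two are equal, so their difference---which is precisely $\sum_j\hat{\mathtt{c}}_{ij}$ contracted against $\bm G$---vanishes. As $\bm G$ is arbitrary, each spatial component of $\sum_{j\in\mathcal{N}(i)}\hat{\mathtt{c}}_{ij}$ is zero. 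Equivalently, this is just the constant-preservation property of \eqref{eq:dg-lo-modified} read off at the level of the coefficients.

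For the second identity, I would use the symmetry of $\hat{\mathtt{d}}_{ij}$ together with the symmetry of the neighbor relation. Within an element the coefficients $\mathtt{d}_{ij}$ defined in \eqref{eq:dij} are manifestly symmetric, since the defining maximum is invariant under exchanging $i$ and $j$; across an element face the coefficient $\hat{\mathtt{d}}_{ij}=\tfrac12\lambda\|\wn_{ij}\|_{\ell^2}$ is symmetric because the shared face contributes the same surface metric $\|\wn_{ij}\|_{\ell^2}=\|\wn_{ji}\|_{\ell^2}$ from both sides and the Lax--Friedrichs wave-speed bound $\lambda$ is common to the two traces. Together with the fact that $j\in\mathcal{N}(i)$ if and only if $i\in\mathcal{N}(j)$ (the sparse in-element stencil of $\Didp$ and the face-neighbor pairing are both symmetric), the global double sum decomposes into unordered pairs $\{i,j\}$, and each such pair contributes $\hat{\mathtt{d}}_{ij}(\uu_j-\uu_i)+\hat{\mathtt{d}}_{ji}(\uu_i-\uu_j)=(\hat{\mathtt{d}}_{ij}-\hat{\mathtt{d}}_{ji})(\uu_j-\uu_i)=0$, so the total sum vanishes.

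The main obstacle is the bookkeeping in the first identity: one must carefully match the volume contribution (coming from $\Didp_{i,\Ref}^\tr\Jinvidp$) and the two distinct boundary contributions (the self-term $\tfrac{n_f(i)}{2}\wn_{ii}$ and the neighbor central-flux terms $\tfrac12\wn_{ij}$) against the two sides of \eqref{eq:idp-sbp}, keeping the signs and the scaled-normal weights consistent. The second identity is routine once one verifies the cross-face symmetry of $\hat{\mathtt{d}}_{ij}$.
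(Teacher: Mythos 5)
Your proof is correct and takes essentially the same route as the paper, whose entire justification is the two one-line remarks embedded in the proposition: the first identity is the constant-preservation property of the corrected low-order scheme (i.e.\ the metric identity \eqref{eq:idp-sbp} enforced by the conservative correction, read off coefficient-wise), and the second is the pairwise cancellation from symmetry of the coefficients. Your expansion---testing the linear map against arbitrary constant flux fields $\bm G$, and explicitly verifying the cross-face symmetry of the Lax--Friedrichs dissipation coefficients $\hat{\mathtt{d}}_{ij} = \tfrac{1}{2}\lambda\|\wn_{ij}\|_{\ell^2}$---simply fills in details the paper leaves implicit.
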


\subsection{Invariant domain preservation}

We now set out to prove that the discretization defined by \eqref{eq:dg-idp} is \emph{invariant domain preserving} (IDP).
We will make use of a strong stability preserving (SSP) Runge-Kutta method for the temporal discretization \cite{Gottlieb2011}.
Such methods can be written as a convex combination of forward Euler steps, and so it suffices to prove the IDP property for a forward Euler update.
We therefore consider the forward Euler discretization of \eqref{eq:dg-idp}, given by
\begin{equation} \label{eq:idp-fe}
  \frac{\mathtt{m}_i}{\Delta t} \uu_i^{n+1}
  = \frac{\mathtt{m}_i}{\Delta t} \uu_i^{n}
  + \sum_{j \in \mathcal{N}(i)} \hat{\mathtt{c}}_{ij} \cdot \FF_j^n
  + \sum_{j \in \mathcal{N}(i)} \hat{\mathtt{d}}_{ij} ( \uu_j^n - \uu_i^n ).
\end{equation}
As in \cite{Guermond2016}, we use the property $\sum_{j \in \mathcal{N}(i)} \hat{\mathtt{c}}_{ij} = 0$ to rewrite \eqref{eq:dg-idp} in the form
\begin{equation} \label{eq:idp-fe-2}
  \frac{\mathtt{m}_i}{\Delta t} \uu_i^{n+1}
  = \uu_i^{n} \left( \frac{\mathtt{m}_i}{\Delta t} - \sum_{\mathclap{i \neq j \in \mathcal{N}(i)}} 2 \hat{\mathtt{d}}_{ij} \right)
  + \sum_{\mathclap{i\neq j\in\mathcal{N}(i)}}
    \big(
      \hat{\mathtt{c}}_{ij} \cdot \left( \FF_j^n - \FF_i^n \right)
      + \hat{\mathtt{d}}_{ij} \left( \uu_j^n + \uu_i^n \right)
    \big).
\end{equation}
At this point we can introduce the so-called ``bar states'' (also referred to as ``intermediate limiting states'', cf.~\cite{Guermond2016}), which are defined by
\begin{equation} \label{eq:bar-states}
    \overline{\uu}_{ij}^{n+1}
        = \frac{1}{2} \left( \uu_i^n + \uu_j^n \right)
        + \frac{\hat{\mathtt{c}}_{ij}}{2 \hat{\mathtt{d}}_{ij}}
            \cdot \left(\FF_j^n - \FF_i^n \right).
\end{equation}
\begin{prop} \label{prop:bar-states}
    Suppose $\mathcal{A}$ is a convex invariant set of \eqref{eq:cons-law} such that $\uu_i^n, \uu_j^n \in \mathcal{A}.$
    Then, the bar states $\overline{\uu}_{ij}$ belong to $\mathcal{A}$.
\end{prop}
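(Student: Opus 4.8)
The plan is to recognize the bar state $\overline{\uu}_{ij}^{n+1}$ as the spatial average of the self-similar entropy solution of a one-dimensional Riemann problem of the form \eqref{eq:riemann-problem}, and then to invoke the definition of an invariant set. First I would factor $\hat{\mathtt{c}}_{ij} = \|\hat{\mathtt{c}}_{ij}\|_{\ell^2}\,\mathtt{n}_{ij}$ and set $\mu = \hat{\mathtt{d}}_{ij}/\|\hat{\mathtt{c}}_{ij}\|_{\ell^2}$, so that the definition \eqref{eq:bar-states} can be rewritten as
\[
  \overline{\uu}_{ij}^{n+1}
  = \tfrac12\left(\uu_i^n + \uu_j^n\right)
  - \frac{1}{2\mu}\left(\bm F(\uu_i^n)\cdot\mathtt{n}_{ij} - \bm F(\uu_j^n)\cdot\mathtt{n}_{ij}\right).
\]
This is precisely the form produced by averaging the Riemann solution \eqref{eq:riemann-problem} with normal $\bm n = \mathtt{n}_{ij}$, left state $\uu^- = \uu_j^n$, and right state $\uu^+ = \uu_i^n$.

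Next I would carry out that averaging explicitly. Letting $\bm u(x,t)$ denote the entropy solution of this Riemann problem, I integrate $\partial_t \bm u + \partial_x(\bm F(\bm u)\cdot\mathtt{n}_{ij}) = 0$ over the space–time control volume $[-\mu t, \mu t]\times[0,t]$. Provided $\mu \ge \lambda_{\max}(\uu_j^n, \uu_i^n, \mathtt{n}_{ij})$, the entire wave fan is contained inside the spatial window, so the lateral flux contributions reduce to the constants $\bm F(\uu_i^n)\cdot\mathtt{n}_{ij}$ and $\bm F(\uu_j^n)\cdot\mathtt{n}_{ij}$, while the initial data integrates to $\mu t(\uu_i^n + \uu_j^n)$. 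Dividing by $2\mu t$ then reproduces exactly the displayed expression for $\overline{\uu}_{ij}^{n+1}$, identifying it with $\frac{1}{2\mu t}\int_{-\mu t}^{\mu t}\bm u(x,t)\,dx$.

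The remaining step is to verify the speed condition and conclude by convexity. The bound $\mu \ge \lambda_{\max}$ for the relevant Riemann problem follows from the definition \eqref{eq:dij}: the $\max$ there is taken precisely so that $\hat{\mathtt{d}}_{ij} \ge \lambda_{\max}(\cdot)\,\|\hat{\mathtt{c}}_{ij}\|_{\ell^2}$ holds in the required orientation, using that $\lambda_{\max}$ bounds the modulus of the wave speeds and is hence invariant under $\mathtt{n}_{ij}\mapsto -\mathtt{n}_{ij}$. Since the invariant-set definition guarantees membership in $\mathcal{A}$ only for the average over $[-\lambda_{\max}t,\lambda_{\max}t]$, when $\mu > \lambda_{\max}$ I would express the average over the wider window as the convex combination
\[
  \frac{\mu-\lambda_{\max}}{2\mu}\,\uu_j^n
  + \frac{\lambda_{\max}}{\mu}\left(\frac{1}{2\lambda_{\max}t}\int_{-\lambda_{\max}t}^{\lambda_{\max}t}\bm u(x,t)\,dx\right)
  + \frac{\mu-\lambda_{\max}}{2\mu}\,\uu_i^n,
\]
whose three arguments all lie in $\mathcal{A}$ (the two end states by hypothesis, the middle term by the invariant-set definition). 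Convexity of $\mathcal{A}$ then yields $\overline{\uu}_{ij}^{n+1}\in\mathcal{A}$.

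I expect the main obstacle to be the bookkeeping around the direction $\mathtt{n}_{ij}$ and the left/right assignment of the states: one must confirm that the combination of signs in \eqref{eq:bar-states} matches the averaged solution of a genuine, entropy-admissible Riemann problem, and that the viscosity coefficient $\hat{\mathtt{d}}_{ij}$ delivered by the $\max$ in \eqref{eq:dij} dominates $\lambda_{\max}$ for exactly that problem. Once this identification is set up correctly, the control-volume integration and the convex-combination argument are routine.
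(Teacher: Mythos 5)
Your proposal reconstructs the argument that the paper itself does not give: the paper's ``proof'' of Proposition \ref{prop:bar-states} is only a citation to Guermond and Popov, and the route you take --- identify $\overline{\uu}_{ij}^{n+1}$ with the spatial average of the self-similar Riemann fan by integrating the conservation law over the space--time box $[-\mu t,\mu t]\times[0,t]$, then widen the averaging window beyond $[-\lambda_{\max}t,\lambda_{\max}t]$ via a convex combination with the two constant end states --- is precisely the standard argument in that reference. The algebra in your first two steps is correct: with $\mu=\hat{\mathtt{d}}_{ij}/\|\hat{\mathtt{c}}_{ij}\|_{\ell^2}$, the bar state \eqref{eq:bar-states} is the average over $[-\mu t,\mu t]$ of the Riemann solution with left state $\uu_j^n$, right state $\uu_i^n$, and flux $\bm F(\cdot)\cdot\mathtt{n}_{ij}$, and your three-term convex combination has coefficients $\tfrac{\mu-\lambda_{\max}}{2\mu}+\tfrac{\lambda_{\max}}{\mu}+\tfrac{\mu-\lambda_{\max}}{2\mu}=1$, so convexity of $\mathcal{A}$ finishes the proof once the speed condition $\mu\ge\lambda_{\max}$ is established.

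The one step that is wrong as stated is the symmetry you invoke to get that speed condition: the maximal wave speed is \emph{not} invariant under $\mathtt{n}_{ij}\mapsto-\mathtt{n}_{ij}$ with the left/right assignment of the states held fixed. Reversing the flux direction while keeping the same data is a genuinely different Riemann problem; for Burgers' flux with data $(u_L,u_R)=(0,1)$ one gets a rarefaction with $\lambda_{\max}=1$, while the reversed flux turns the same data into a shock of speed $1/2$. The correct statement is the reflection identity $\lambda_{\max}(\bm u_L,\bm u_R,\bm n)=\lambda_{\max}(\bm u_R,\bm u_L,-\bm n)$: the direction flip must be accompanied by a swap of the states. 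This matters here, because what \eqref{eq:dij} literally provides is $\hat{\mathtt{d}}_{ij}\ge\lambda_{\max}(\uu_i,\uu_j,\mathtt{n}_{ij})\|\hat{\mathtt{c}}_{ij}\|_{\ell^2}$ --- and, when $\mathtt{n}_{ji}=-\mathtt{n}_{ij}$, the second term of the max controls that same quantity again by the reflection identity --- whereas your identification of the bar state requires dominating $\lambda_{\max}(\uu_j,\uu_i,\mathtt{n}_{ij})$, the speed of the problem with the states in the \emph{other} order; the two numbers differ in general, as the Burgers example shows. To close the gap you must either read the arguments of $\lambda_{\max}$ in \eqref{eq:dij} with the orientation matched to the bar-state Riemann problem (which is how the cited works arrange it), or observe that in this setting $\lambda_{\max}$ denotes a guaranteed upper bound on wave speeds (as in the Lax--Friedrichs construction of Section \ref{sec:numerical-flux}) that is taken to dominate both orientations. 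So the obstacle you flagged at the end is real, and its resolution is the reflection identity together with a consistent orientation convention --- not direction-flip invariance.
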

\begin{proof}
    See \cite{Guermond2019}.
\end{proof}
Having defined the bar states, we rewrite \eqref{eq:idp-fe-2} as a convex combination
\begin{equation} \label{eq:convex-combination}
    \uu_i^{n+1} = \left( 1 - \sum_{\mathclap{i\neq j\in\mathcal{N}(i)}}
        \frac{2 \Delta t \hat{\mathtt{d}}_{ij}}{\mathtt{m}_i}
    \right) \uu_i^n
    + \sum_{i\neq j\in\mathcal{N}(i)} \left(
         \frac{2 \Delta t \hat{\mathtt{d}}_{ij}}{\mathtt{m}_i}
        \right) \overline{\uu}_{ij}^{n+1}.
\end{equation}
As an immediate consequence of this rewriting is the following proposition.
\begin{prop} \label{prop:idp}
    Suppose the following CFL condition holds:
    \begin{equation}\label{eq:cfl-low}
        \Delta t \leq \min_i \frac{\mathtt{m}_i}{2 \hat{\mathtt{d}}_{ii}},
    \end{equation}
    where $\hat{\mathtt{d}}_{ii} = -\sum_{i\neq j\in\mathcal{N}(i)} \hat{\mathtt{d}}_{ij}$.
    Let $\mathcal{A}$ denote a convex invariant set of \eqref{eq:cons-law}, such that $\uu_i^n \in \mathcal{A}$ for all $i$.
    Then, $\uu_i^{n+1} \in \mathcal{A}$ for all $i$.
\end{prop}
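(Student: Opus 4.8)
The plan is to read the result directly off the convex-combination structure already established in \eqref{eq:convex-combination}, using Proposition \ref{prop:bar-states} together with the convexity of $\mathcal{A}$. Since an SSP Runge--Kutta integrator is a convex combination of forward Euler stages and convex invariant sets are closed under convex combinations, it suffices to treat the forward Euler update, which is precisely what \eqref{eq:convex-combination} encodes. The strategy is therefore to show that the right-hand side of \eqref{eq:convex-combination} is a genuine convex combination of points already known to lie in $\mathcal{A}$, and then invoke convexity.

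First I would check that the weights sum to one. This is immediate from the algebraic form of \eqref{eq:convex-combination}: the coefficient of $\uu_i^n$ is $1 - \sum_{i\neq j\in\mathcal{N}(i)} 2\Delta t\,\hat{\mathtt{d}}_{ij}/\mathtt{m}_i$, while the coefficient of each bar state $\overline{\uu}_{ij}^{n+1}$ is exactly $2\Delta t\,\hat{\mathtt{d}}_{ij}/\mathtt{m}_i$, so the sum of all weights telescopes to one regardless of the values involved. Next I would establish nonnegativity of each weight. For the weights multiplying the bar states I would argue that $\hat{\mathtt{d}}_{ij}\geq 0$ for $j\neq i$: the interior contributions are defined in \eqref{eq:dij} as a maximum of products of the (nonnegative) guaranteed maximum wave speeds with the (nonnegative) norms $\|\mathtt{c}_{ij}\|_{\ell^2}$, and the boundary contributions equal $\tfrac{1}{2}\lambda\|\wn_{ij}\|_{\ell^2}\geq 0$; combined with $\Delta t > 0$ and $\mathtt{m}_i > 0$ (each diagonal mass entry is a positive quadrature weight times a positive Jacobian determinant), every such weight is nonnegative.

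For the remaining weight on $\uu_i^n$, nonnegativity is exactly the content of the CFL condition \eqref{eq:cfl-low}. Using the convention $\hat{\mathtt{d}}_{ii} = -\sum_{i\neq j\in\mathcal{N}(i)}\hat{\mathtt{d}}_{ij}$, the self-weight equals $1 - (2\Delta t/\mathtt{m}_i)\sum_{i\neq j}\hat{\mathtt{d}}_{ij}$, which is nonnegative precisely when $\Delta t$ satisfies \eqref{eq:cfl-low}. Having verified that the weights form a nonnegative partition of unity, I would conclude as follows: by hypothesis $\uu_i^n\in\mathcal{A}$, and by Proposition \ref{prop:bar-states} every bar state $\overline{\uu}_{ij}^{n+1}$ with $\uu_i^n,\uu_j^n\in\mathcal{A}$ also lies in $\mathcal{A}$; since \eqref{eq:convex-combination} writes $\uu_i^{n+1}$ as a convex combination of these points and $\mathcal{A}$ is convex, it follows that $\uu_i^{n+1}\in\mathcal{A}$.

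The one technical point I would handle with care is the degenerate case $\hat{\mathtt{d}}_{ij}=0$, in which the bar state \eqref{eq:bar-states} is ill-defined because of the division by $2\hat{\mathtt{d}}_{ij}$; but in that case the corresponding weight in \eqref{eq:convex-combination} vanishes, so the offending term may simply be dropped from the combination and no difficulty arises. Beyond this, I do not expect a genuine obstacle: the substantive work is front-loaded into the derivation of \eqref{eq:convex-combination} and into Proposition \ref{prop:bar-states}, and the present statement reduces to the short verification that the coefficients are nonnegative and sum to one.
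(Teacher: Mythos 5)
Your proposal is correct and takes essentially the same route as the paper: there, Proposition \ref{prop:idp} is stated as an immediate consequence of the convex-combination form \eqref{eq:convex-combination} together with Proposition \ref{prop:bar-states}, which is exactly the argument you spell out. Your extra verifications (weights summing to one, nonnegativity of $\hat{\mathtt{d}}_{ij}$ and of the self-weight under the CFL condition \eqref{eq:cfl-low}, and the degenerate case $\hat{\mathtt{d}}_{ij}=0$) are precisely the details the paper leaves implicit.
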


\subsection{Application: linear advection}

Consider the linear, scalar advection equation
\begin{equation}
   u_t(\bm x, t) + \nabla \cdot (\bm\beta(\bm x) u(\bm x, t) ) = 0,
\end{equation}
where $\bm\beta(\bm x) : \Omega \to \R^d$ is a prescribed velocity field.
We assume that the velocity field is divergence free, i.e.\ $\nabla\cdot\bm\beta = 0$.
We are interested in ensuring that the low-order method is \emph{bounds preserving}, i.e.\ if $u(\bm x, 0) \in [a,b]$ for all $\bm x \in \Omega$, then
$u(\bm x, t) \in [a,b]$ for all $t$.

If $\bm\beta$ is spatially constant, then the method described above applies immediately to this case, and the low-order method defined by \eqref{eq:idp-fe} is bounds perserving.
However, if $\bm\beta$ is spatially variable, then some modifications to the above method are required.
The reason for this is that in this case, Proposition \ref{prop:bar-states} may fail to hold.
For instance, if $u_h^n$ is spatially constant (i.e.\ $u^n_i = a$ for all $i$, for some fixed $a$), then we would expect $\overline{\uu}_{ij}^{n+1}$ to be equal to the same constant. However, since $\bm\beta$ is spatially varying, we have, in general, $\FF_j^n \neq \FF_i^n$ and therefore  $\overline{\uu}_{ij}^{n+1} \neq a$.

To avoid this issue, we slightly modify the formulation, using an approach similar to that developed by Kuzmin in \cite{Kuzmin2020}.
We define the modified bar states for the advection equation by
\begin{equation}
    \overline{\uu}_{ij}^{n+1}
        = \frac{1}{2} \left( \uu_i^n + \uu_j^n \right)
        + \frac{\hat{\mathtt{c}}_{ij} \cdot \bm\beta_j}{2 \hat{\mathtt{d}}_{ij}}
            \left(\uu_j^n - \uu_i^n \right),
\end{equation}
where $\bm\beta_i = \bm\beta(\bm x_i)$, and $\bm x_i$ denotes the coordinates of the $i$th node of the mesh.
The modified graph viscosity coefficients for the advection equation are given by
\begin{equation}
    \hat{\mathtt{d}}_{ij} = \max\left\{
    | \hat{\mathtt{c}}_{ij} \cdot \bm\beta_i |,
    | \hat{\mathtt{c}}_{ji} \cdot \bm\beta_j |
    \right\}.
\end{equation}
It is clear from this definition that if $\uu_i^n, \uu_j^n \in [a,b]$ then $\overline{\uu}_{ij} \in [a,b]$.
As a consequence, defining the modified update for $\uu_i^{n+1}$ as the convex combination of bar states given by \eqref{eq:convex-combination}, we see that $\uu_i^{n+1} \in [a,b]$, as desired.
\begin{rem*}
   The property $\sum_{j \in \mathcal{N}(i)} \hat{\mathtt{c}}_{ij} = 0$ was essential to writing the low-order update in terms of the bar states.
   In the context of linear advection, the analogous property is $\sum_{j \in \mathcal{N}(i)} \hat{\mathtt{c}}_{ij}\cdot\bm\beta_j = 0$.
   This is the discrete equivalent to the divergence free constraint, $\nabla\cdot\bm\beta = 0$.
   In order to ensure that this property holds, the conservative correction procedure described in Section \ref{sec:correction} must be modified to take into account the velocity field $\bm\beta$.
\end{rem*}

\subsection{Comparison with unsparsified method}
\label{sec:unsparsified}

As mentioned in Section \ref{sec:idp}, the motivation for introducing the sparsified derivative operators $\Didp$ is that the addition of the graph viscosity term to the ``unsparsified'' operators can result in overly dissipative results when the stencil size is increased.

To illustrate this point, we consider the advection of a sine wave in one dimension.
The domain is taken to be $\Omega = [-1,1]$, the initial condition is $u(x) = \sin(\pi x)$, and periodic boundary conditions are enforced.
We integrate the equation $u_t + u_x = 0$ until a final time of $t=2$, at which point the solution is identical to the initial condition.
We compare the sparsified low-order IDP method to the graph viscosity method applied to the unsparisified DG-SEM operator.
We fix the total number of degrees of freedom to be 128, and consider polynomial degrees 3, 7, 15, and 31 (corresponding to mesh sizes of 32, 16, 8, and 4 elements, respectively).
The results are shown in Figure \ref{fig:low-order-comparison}.
It is immediately clear that increasing the polynomial degree causes a large degradation in quality of the unsparsified methods.
This is because the size of the stencil grows with the polynomial degree, causing each degree of freedom to be coupled to $\mathcal{O}(p)$ other degrees of freedom.
Each such connection results in an additional graph viscosity contribution, rendering the method overly diffusive.
On the other hand, the sparsified operator has a stencil size of $\mathcal{O}(1)$, and as a consequence, we do not observe a degradation of the results with increased $p$.

\begin{figure}
   \centering
   \includegraphics{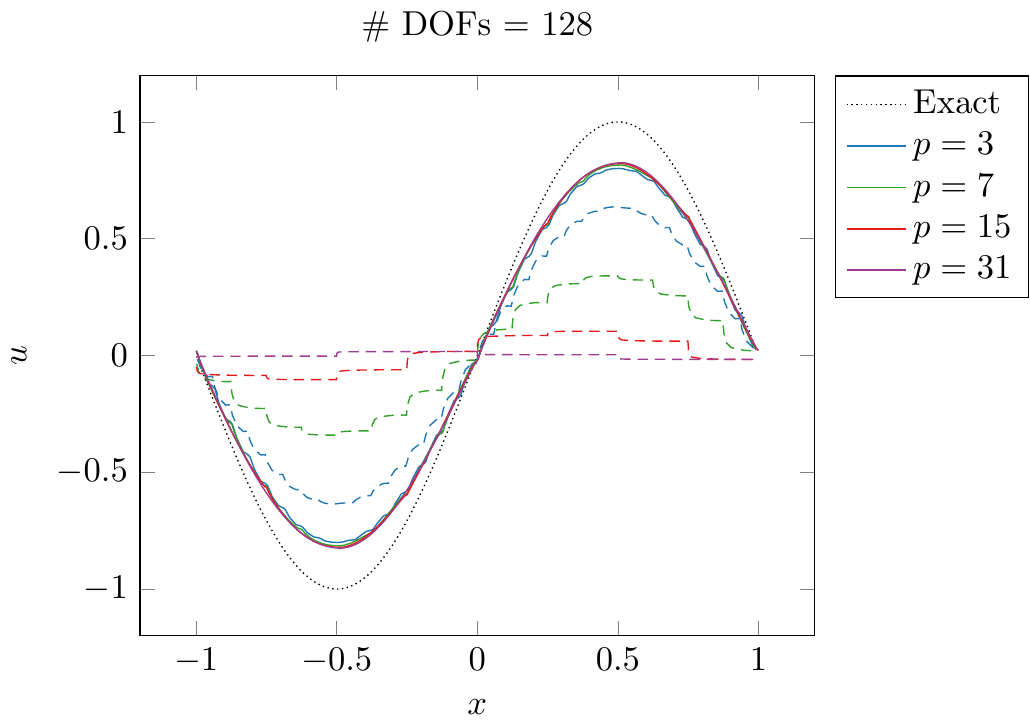}
   \caption{Comparison of sparsified and unsparsified low-order IDP methods applied to the advection equation.
   The polynomial degree is increased and the mesh is simultaneously coarsened to keep the number of degrees of freedom fixed.
   The sparsified methods are shown in solid lines, and the unsparsified methods are shown in dashed lines.
   The exact solution is given by the dotted line.}
   \label{fig:low-order-comparison}
\end{figure}

\section{Flux correction and limiting strategies}
\label{sec:fct}

We now combine the low-order IDP discretization \eqref{eq:dg-idp} with the high-order discretization \eqref{eq:dg-strong}, to obtain a bound-preserving scheme.
To this end, we consider a forward Euler time discretization of both the low-order and high-order approximations (as mentioned above, the extension to high-order time integration is straightforward using SSP methods).
The provisional high-order update for element $K$ is given by
\begin{equation} \label{eq:ho-update}
    \frac{\mathtt{m}_i}{\Delta t} \uu_i^{H,n+1}
    = \frac{\mathtt{m}_i}{\Delta t} \uu_i^n
    - \left(
          \sum_{i=1}^d \D_{i,\Ref} \sum_{j=1}^d \Jinv_{ij} \FF_j^n
        + \sum_{e\in\partial K} \B_{e,\Ref} (\FFhat_e^n - \FF_e^n \cdot\n)
    \right),
\end{equation}
and the low-order IDP update is given by
\begin{equation} \label{eq:lo-update}
    \frac{\mathtt{m}_i}{\Delta t} \uu_i^{L,n+1}
    = \frac{\mathtt{m}_i}{\Delta t} \uu_i^n
    + \left(
          \sum_{i=1}^d \Didp^\tr_{i,\Ref} \sum_{j=1}^d \Jinvidp_{ij} \FF_j^n
        - \sum_{e\in\partial K} \B_{e,\Ref} \FFhat_e^n
    \right)
     - \sum_{j \in \mathcal{E}(i)}\mathtt{d}_{ij} ( \uu_j - \uu_i ).
\end{equation}
We introduce the short-hand notation
\begin{equation}
    \frac{\mathtt{m}_i}{\Delta t} \uu_i^{H,n+1}
    = \frac{\mathtt{m}_i}{\Delta t} \uu_i^n + \mathtt{r}_i^{H,n},
    \hspace{1in}
    \frac{\mathtt{m}_i}{\Delta t} \uu_i^{L,n+1}
    = \frac{\mathtt{m}_i}{\Delta t} \uu_i^{n} + \mathtt{r}_i^{L,n}.
\end{equation}
Note that the provisional high-order solution may not satisfy convex invariants, such as maximum principles, positivity, etc.
However, the low-order solution is guaranteed to be IDP as long as the CFL condition \eqref{eq:cfl-low} is satisfied.
We then define the flux-corrected update by
\begin{equation} \label{eq:fct-update}
    \frac{\mathtt{m}_i}{\Delta t} \uu_i^{n+1}
    = \frac{\mathtt{m}_i}{\Delta t} \uu_i^n
    + \mathtt{r}_i^{L,n}
    + \alpha_i \left(
        \mathtt{r}_i^{H,n} - \mathtt{r}_i^{L,n}
      \right)
\end{equation}
where $0 \leq \alpha_i \leq 1$ is a limiting factor that is yet to be determined.

To ensure conservation of the flux corrected solution, it is important that the correction terms do not impair the following zero-sum property:
\begin{prop} \label{prop:zero-sum-correction}
   Let $\mathcal{I}(K)$ denote the set of indices associated with the element $K\in\T_h$.
   Then,
   \begin{equation}
      \sum_{i\in\mathcal{I}(K)} \left( \mathtt{r}_i^{H,n} - \mathtt{r}_i^{L,n} \right) = 0.
   \end{equation}
\end{prop}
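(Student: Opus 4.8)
The plan is to recognize the claimed sum as $\mathtt{1}^\tr(\mathtt{r}^{H,n} - \mathtt{r}^{L,n})$, where $\mathtt{1}$ is the all-ones vector over the nodes of $\mathcal{I}(K)$, and to show that $\mathtt{1}^\tr\mathtt{r}^{H,n}$ and $\mathtt{1}^\tr\mathtt{r}^{L,n}$ each collapse to the \emph{identical} boundary quantity $-\mathtt{1}^\tr\sum_{e\in\partial K}\B_{e,\Ref}\FFhat_e^n$. Since both residuals are defined element-locally in \eqref{eq:ho-update} and \eqref{eq:lo-update}, the computation can be carried out one element at a time. Throughout I would keep the spatial index (say $k$) in the volume sums $\sum_{k=1}^d$ distinct from the nodal index $i$ appearing in $\mathtt{r}_i$, which is the only genuine bookkeeping care required.

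First I would contract the high-order residual from \eqref{eq:ho-update} against $\mathtt{1}^\tr$. The volume term $\sum_{k}\D_{k,\Ref}\sum_j\Jinv_{kj}\FF_j^n$ does \emph{not} vanish on its own; instead I would apply the summation-by-parts identity \eqref{eq:sbp-strong}, which rewrites $\mathtt{1}^\tr\sum_k\D_{k,\Ref}\tilde{\FF}_k$ as the boundary sum $\mathtt{1}^\tr\sum_{e\in\partial K}\B_{e,\Ref}\FF_e^n\cdot\n$. This cancels exactly the $-\FF_e^n\cdot\n$ contribution in the face term of \eqref{eq:ho-update}, leaving $\mathtt{1}^\tr\mathtt{r}^{H,n} = -\mathtt{1}^\tr\sum_{e\in\partial K}\B_{e,\Ref}\FFhat_e^n$.

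Next I would contract the low-order residual from \eqref{eq:lo-update}. Here the volume term vanishes for a different reason: since $\Didp_\oned$ satisfies the zero-sum property \eqref{eq:zero-sum}, one has $\Didp_{k,\Ref}\mathtt{1}=0$ by the Kronecker structure, hence $\mathtt{1}^\tr\Didp_{k,\Ref}^\tr = (\Didp_{k,\Ref}\mathtt{1})^\tr = 0$, and the whole term $\mathtt{1}^\tr\sum_k\Didp_{k,\Ref}^\tr\sum_j\Jinvidp_{kj}\FF_j^n$ drops out. The graph-viscosity term $\sum_i\sum_{j\in\mathcal{E}(i)}\mathtt{d}_{ij}(\uu_j-\uu_i)$ vanishes because $\mathtt{d}_{ij}=\mathtt{d}_{ji}$ while $\uu_j-\uu_i$ is antisymmetric in $i,j$, so the double sum over pairs within the element cancels in pairs (the argument recorded in the proposition following \eqref{eq:dg-idp}). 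What survives is again $\mathtt{1}^\tr\mathtt{r}^{L,n} = -\mathtt{1}^\tr\sum_{e\in\partial K}\B_{e,\Ref}\FFhat_e^n$, since both schemes employ the same single-valued Lax-Friedrichs flux $\FFhat_e^n$ on each face.

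Subtracting the two expressions, the boundary quantities coincide and the difference is zero, which is the claim. The conceptual heart of the argument — and what I would flag as the point most easily overlooked — is that the high-order and low-order volume terms are annihilated by two \emph{different} mechanisms: the summation-by-parts property \eqref{eq:sbp} for the dense operator $\D$ (which turns the volume contribution into a boundary contribution) versus the plain zero-sum property \eqref{eq:zero-sum} for the sparsified operator $\Didp$ (which kills it outright). One must verify that, despite these distinct routes, both residuals reduce to exactly the same face integral of the numerical flux, so that the flux correction $\mathtt{r}_i^{H,n}-\mathtt{r}_i^{L,n}$ is internally conservative over each element.
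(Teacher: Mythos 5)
Your proof is correct and takes essentially the same route as the paper's: the single-valued numerical flux terms $\FFhat_e^n$ are common to both residuals and drop out, while the remaining terms vanish when contracted against $\mathtt{1}^\tr$. Your version is in fact more explicit than the paper's one-line argument, since you correctly separate the two mechanisms the paper compresses into ``the elemental differentiation matrices sum to zero elementwise'' --- namely that the high-order volume term cancels against the $\FF_e^n\cdot\n$ face term via the summation-by-parts identity \eqref{eq:sbp-strong}, whereas the low-order volume term is annihilated outright by the zero-sum property \eqref{eq:zero-sum}, with the graph viscosity contribution vanishing by symmetry of $\mathtt{d}_{ij}$.
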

\begin{proof}
Note that the terms associated with the numerical flux $\FFhat$ in both \eqref{eq:ho-update} and \eqref{eq:lo-update} are identical, and therefore cancel in the difference $\mathtt{r}_i^{H,n} - \mathtt{r}_i^{L,n}$.
The remaining terms in both the high-order and low-order residuals are the elemental differentiation matrices, which can be seen to sum to zero elementwise.
\end{proof}

In fact, the above proposition can be extended to consider ``lines'' of degrees of freedom in a dimension-by-dimension fashion.
Recall that each element $K\in\T_h$ contains $(p+1)^d$ nodes, which we can index as $\bm j = (j_1, j_2, \ldots, j_d)$.
Fix a dimension $1 \leq k \leq d$. Then, let $\mathcal{I}_{k,\bm j}(K)$ denote the set of all indices of nodes in the element $K$, with local index $(j_1, j_2, \ldots, i, \ldots, j_d)$, where the $k$th index of $\bm j$ has been replaced by $i$ with $1 \leq i \leq p+1$.
Furthermore, the residuals within a given element are decomposed into contributions corresponding to each coordinate dimension:
\begin{equation}
   \mathtt{r}_i^{H,n} = \sum_{k=1}^d \mathtt{r}_{i,k}^{H,n}, \hspace{1in}
   \mathtt{r}_i^{L,n} = \sum_{k=1}^d \mathtt{r}_{i,k}^{L,n}.
\end{equation}

\begin{prop} \label{prop:line-zero-sum}
   Consider a ``line'' of nodes $\mathcal{I}_{k,\bm j}(K)$ within an element $K\in\T_h$ (for any $1 \leq k \leq d$ and multi-index $\bm j$ as described above). Then
   \begin{equation}
      \sum_{i\in\mathcal{I}_{k, \bm j}(K)} \left( \mathtt{r}_{i,k}^{H,n} - \mathtt{r}_{i,k}^{L,n} \right) = 0.
   \end{equation}
\end{prop}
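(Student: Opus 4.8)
The plan is to localize the elementwise cancellation of Proposition~\ref{prop:zero-sum-correction} to a single coordinate line, exploiting the fact that the dimension-$k$ residual contribution only couples nodes lying on a common line in direction $k$. First I would record what $\mathtt{r}_{i,k}^{H,n}$ and $\mathtt{r}_{i,k}^{L,n}$ actually contain. The dimension-$k$ piece of the high-order residual \eqref{eq:ho-update} consists of the strong-form volume term built from $\D_{k,\Ref}$ together with the boundary corrections $\B_{e,\Ref}\FF_e\cdot\n$ on the two faces perpendicular to direction $k$; the dimension-$k$ piece of the low-order residual \eqref{eq:lo-update} consists of the weak-form volume term built from $\Didp_{k,\Ref}^\tr$ together with the element-internal graph-viscosity couplings $-\mathtt{d}_{ij}(\uu_j-\uu_i)$ along direction $k$. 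Exactly as in Proposition~\ref{prop:zero-sum-correction}, the numerical-flux terms $\B_{e,\Ref}\FFhat_e^n$ are identical in both residuals and cancel in the difference, so they may be discarded at the outset.

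The crucial structural observation I would establish next is \emph{locality to the line}: because $\D_{k,\Ref}$ and $\Didp_{k,\Ref}^\tr$ are Kronecker products in which every factor except the $k$th is the \emph{diagonal} mass matrix $\M_\oned$ (cf.\ \eqref{eq:kronecker}), these operators do not couple nodes with different transverse indices. Consequently the component of each volume term at a node of $\mathcal{I}_{k,\bm j}(K)$ involves only fluxes at other nodes of the \emph{same} line, and equals a common positive transverse weight $w_{\bm j}$ (the product of the Gauss--Lobatto weights in the non-$k$ directions) times the action of the one-dimensional operator $\D_\oned$, respectively $\Didp_\oned^\tr$, on the line data. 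The faces perpendicular to direction $k$ meet the line precisely at its two endpoints, and the sparsified stencil couples each node only to its nearest neighbors along that line, so the boundary and graph-viscosity pieces are line-local as well.

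With locality in hand, summing $\mathtt{r}_{i,k}^{H,n}-\mathtt{r}_{i,k}^{L,n}$ over the line reduces to a one-dimensional identity. Summing the high-order volume term amounts to applying $\mathtt{1}^\tr \D_\oned$ to the line fluxes; by the summation-by-parts property \eqref{eq:sbp} this produces exactly the difference of the contravariant flux at the two endpoints, which cancels against the line's endpoint boundary contributions $\B_{e,\Ref}\FF_e\cdot\n$ (the line analogue of \eqref{eq:sbp-strong}). Summing the low-order volume term amounts to applying $\mathtt{1}^\tr \Didp_\oned^\tr = (\Didp_\oned\mathtt{1})^\tr$, which vanishes by the zero-sum property \eqref{eq:zero-sum}. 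Finally, summing the graph viscosity along the line vanishes by the symmetry $\mathtt{d}_{ij}=\mathtt{d}_{ji}$, since the contributions of each pair $(i,j)$ and $(j,i)$ cancel. Note that the high-order and low-order metric terms $\Jinv$ and $\Jinvidp$ need not agree, since each volume term is shown to vanish or to cancel its own boundary partner separately. Adding the three resulting (zero or mutually cancelling) pieces gives the claim.

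I expect the main obstacle to be the second paragraph, namely carefully justifying the dimensional decomposition and line-locality: one must verify that the boundary terms on the transverse faces and the internal graph-viscosity couplings can be attributed cleanly to a single coordinate direction and confined to a single line, which is where the diagonality of $\M_\oned$ and the nearest-neighbor sparsity of $\Didp_\oned$ do the real work. Once locality is secured, the remaining computation is a one-dimensional repeat of the conservation argument already carried out for Proposition~\ref{prop:zero-sum-correction}.
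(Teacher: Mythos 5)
Your proof is correct and follows essentially the same route as the paper: the paper's proof is a one-sentence appeal to the Kronecker-product structure of $\D_{k,\Ref}$ and $\Didp_{k,\Ref}$ (as in \eqref{eq:kronecker}), which is precisely the line-locality observation at the heart of your argument. Your write-up simply makes explicit what the paper leaves implicit---the diagonality of the transverse $\M_\oned$ factors, the one-dimensional SBP cancellation \eqref{eq:sbp} against the endpoint terms $\B_{e,\Ref}\FF_e\cdot\n$, the zero-sum property \eqref{eq:zero-sum} for the low-order weak-form term, and the symmetry of $\mathtt{d}_{ij}$---and these details are all accurate.
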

\begin{proof}
   This property follows from the Kronecker product definition of the operators $\D_{i,\Ref}$ and $\Didp_{i,\Ref}$, e.g.\ as written in \eqref{eq:kronecker}.
\end{proof}

\subsection{Linear constraints for scalar problems}
\label{sec:linear-constraints}

Suppose the governing equation is a scalar conservation law.
In this case, the equation satisfies a local maximum principle, and any interval $[a,b] \subseteq \R$ is a convex invariant set.
Since the low-order method is IDP, we have that if $\uu_j^n \in [a,b]$ for all $j\in\mathcal{N}(i)$, then $\overline{\uu}_{ij}^{n+1} \in [a,b]$.
We wish to enforce a similar property for the high-order limited quantity $\uu_i^{n+1}$.
Specifically, for each $i$, we choose bounds $\uu_i^{\min}$ and $\uu_i^{\max}$ and enforce $\uu_i^{n+1} \in [\uu_i^{\min}, \uu_i^{\max}]$.
Since this constraint is linear (as opposed to nonlinear constraints such as entropy inequalities), the limiting procedure is relatively simple.

\subsubsection{Bounds}
\label{sec:bounds}

In order to choose the limiting factors $\alpha_i$, we must first choose bounds $\uu_i^{\min}$ and $\uu_i^{\max}$ for all $i$.
We can consider several choices of bounds:
\begin{itemize}
   \item $\uu_j^n, \quad j \in \mathcal{N}(i)$,
   \item $\uu_j^{L,n+1}, \quad j \in \mathcal{N}(i)$,
   \item $\overline{\uu}_{ij}^{n+1}, \quad j \in \mathcal{N}(i)$.
\end{itemize}
We have shown earlier in this document that $\uu_j^{L,n+1}$ and $\overline{\uu}_{ij}^{n+1}$ preserve convex invariant sets of the governing equation.
Therefore, these quantities satisfy the discrete maximum principle.
Note that $\uu_j^{L,n+1}$ is a convex combination of $\overline{\uu}_{ij}^{n+1}$, and hence represents a more restrictive bound.
To define $\uu_i^{\min}$ and $\uu_i^{\max}$ we may take the minimum and maximum of some combination of these quantities.
For the remainder of this section, we choose the bounds naturally satisfied by the low-order discretization:
\begin{equation}
  \uu_i^{\min} = \min_{j\in\mathcal{N}(i)} \overline{\uu}_{ij}^{n+1},
  \qquad
  \uu_i^{\max} = \max_{j\in\mathcal{N}(i)} \overline{\uu}_{ij}^{n+1}.
\end{equation}

\subsubsection{Elementwise limiting for linear constraints}

A Zalesak-type limiter can be used to compute a provisional limiting factor $\tilde{\alpha}_i$ for each $i$, cf.~
\cite{Lohmann2017,Zalesak1979}.
Then, $\alpha_i$ can be defined as the minimum of all $\tilde{\alpha}_i$ over the element $K$:
\begin{equation}
   \alpha_i = \min_{j \in \mathcal{I}(K)} \tilde{\alpha}_j \qquad \text{for all $i \in \mathcal I(K)$}.
\end{equation}
Given this definition, $\alpha_i$ is constant on each element, and therefore Proposition \ref{prop:zero-sum-correction} implies that the resulting FCT method is conservative.
However, choosing $\alpha_i$ to be constant over an entire element may be overly pessimistic if the polynomial degree is high, and therefore we consider also subcell limiting.

\subsubsection{Subcell limiting for linear constraints}
\label{sec:subcell-linear}

In order to improve the resolution of the flux-corrected solution, we consider subcell limiting, where the correction factors $\alpha_i$ are allowed to vary within each element.
The idea of the subcell limiting is closely related to that presented in \cite[Section 4.5]{Lohmann2017}.
In that work, a one-dimensional subcell limiting algorithm was described, and the extension to multiple dimensions was proposed as a minimization problem.
Instead of solving a minimization problem, Proposition \ref{prop:line-zero-sum} allows for the use of the one-dimensional subcell limiting procedure along lines of nodes, in a dimension-by-dimension fashion.
For simplicity of notation, let $\mathtt{r}_{i,k}$ denote the antidiffusive flux at the $i$th node in the $k$th dimension: $\mathtt{r}_{i,k} = \mathtt{r}_{i,k}^{H,n} - \mathtt{r}_{i,k}^{L,n}$.

We consider a decomposition of a given element into subcells, as illustrated in Figure \ref{fig:subcell}.
Note that the subcell boundaries are placed in between nodes, such that every node can be considered as a ``subcell-centered value''.
We consider the set of all ``subcell faces,'' which are the set of all \emph{interior} subcell faces (the thin blue lines in the figure).
For each subcell face, we assign an antidiffusive flux that is obtained by summing the nodal antidiffusive fluxes lying on one side of the face.

\begin{figure}
   \centering
   \includegraphics{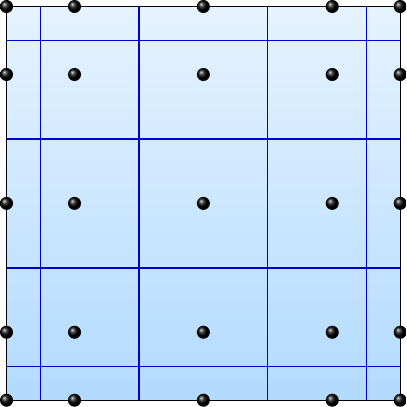}
   \hspace{1cm}
   \includegraphics{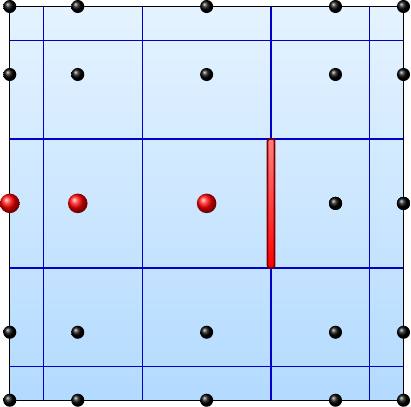}
   \hspace{1cm}
   \includegraphics{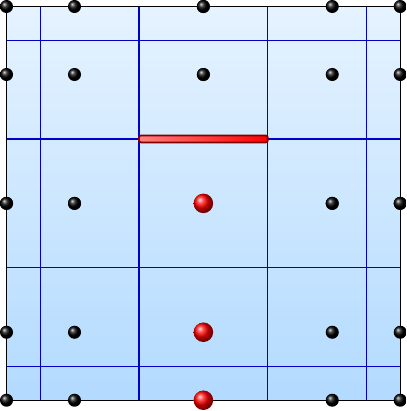}
   \caption{Left: decomposition of elements into subcells, where nodal values are considered to be ``subcell-centered'' values.
   Center and right: definition of a subcell face (indicated by a thick red line), with contributing nodal values (indicated by red nodes).}
   \label{fig:subcell}
\end{figure}

To introduce notation, we refer to the subcell faces by a pair $(m, k)$, where $1 \leq k \leq d$.
The index $k$ indicates that the face $(m,k)$ is normal to the $k$th unit vector.
For a given face $(m,k)$ (indicated by a thick red line in Figure \ref{fig:subcell}), let $\mathcal{M}(m,k)$ denote set of indices of nodes lying on one side of the face (indicated by red nodes in Figure \ref{fig:subcell}).
We then define the subcell face flux associated with the face $(m,k)$ by summing the directional nodal fluxes over the set $\mathcal{M}(m,k)$:
\begin{equation}
  \overline{\mathtt{r}}_{m,k} = \sum_{i \in \mathcal{M}(m,k)} \mathtt{r}_{i,k}
\end{equation}

Fix a node $i$ and direction $k$, and let $(m_i^+,k)$ and $(m_i^-,k)$ denote the subcell faces adjacent to $i$ in the $k$th direction.
Note that $\mathcal{M}(m_i^+,k) = \mathcal{M}(m_i^-,k) \cup \{ i \}$, and therefore, by Proposition \ref{prop:line-zero-sum}, we can write the nodal antidiffusive flux as the difference of adjacent subcell fluxes:
\begin{equation}
   \mathtt{r}_{i,k} = \overline{\mathtt{r}}_{m_i^+,k} - \overline{\mathtt{r}}_{m_i^-,k}.
\end{equation}

For each node $i$, we introduce a nodal provisional limiting coefficient $\tilde{\alpha}_i$, obtained by limiting the sums of positive and negative parts of the adjacent subcell residuals, $\overline{\mathtt{r}}_{m_i^\pm,k}$ (for $1 \leq k \leq d$) according to a Zalesak-type methodology.
Once the nodal provisional limiting coefficients are computed, we define subcell face limiting coefficients $\alpha_{m,k}$.
The quantity $\alpha_{m,k}$ is simply given as the minimum of the two nodal provisional limiting coefficients corresponding to nodes adjacent to the subcell face $(m,k)$:
\begin{equation}
   \alpha_{m,k} = \min \{ \tilde{\alpha}_{i_1}, \tilde{\alpha}_{i_2} \},
\end{equation}
where $i_1$ and $i_2$ are nodes adjacent to $(m,k)$.
The subcell limiting coefficients are used to define the subcell flux corrected solution:
\begin{equation} \label{eq:subcell-fct}
    \frac{\mathtt{m}_i}{\Delta t} \uu_i^{n+1}
    = \frac{\mathtt{m}_i}{\Delta t} \uu_i^n
    + \mathtt{r}_i^{L,n}
    + \sum_{k=1}^d \left(
          \alpha_{m_i^+,k} \overline{\mathtt{r}}_{m_i^+,k}
        - \alpha_{m_i^-,k} \overline{\mathtt{r}}_{m_i^-,k}
      \right).
\end{equation}
Just as in the case of elementwise limiting, in order for the flux correction procedure to be conservative, we require that the limiting corrections sum to zero on each element.
This property is summarized in the following proposition:
\begin{prop} \label{prop:subcell-conservation}
   Fix an element $K$, and let $\mathcal{I}(K)$ denote the set of all nodal indices in element $K$.
   Let $\alpha_{m,k}$ and $\overline{\mathtt{r}}_{m,k}$ be defined as above.
   Then,
   \begin{equation} \label{eq:subcell-zero-sum}
      \sum_{i \in \mathcal{I}(K)} \sum_{k=1}^d \left(
          \alpha_{m_i^+,k} \overline{\mathtt{r}}_{m_i^+,k}
        - \alpha_{m_i^-,k} \overline{\mathtt{r}}_{m_i^-,k}
      \right) = 0.
   \end{equation}
\end{prop}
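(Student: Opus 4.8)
The plan is to exploit the fact that both $\alpha_{m,k}$ and $\overline{\mathtt{r}}_{m,k}$ are quantities attached to subcell \emph{faces}, not to individual nodes, so that the doubly-indexed sum collapses by a face-cancellation (telescoping) argument. First I would note that the outer sum over $k$ decouples, so it suffices to prove, for each fixed direction $1\le k\le d$, that
\[
   \sum_{i\in\mathcal{I}(K)} \left( \alpha_{m_i^+,k}\,\overline{\mathtt{r}}_{m_i^+,k} - \alpha_{m_i^-,k}\,\overline{\mathtt{r}}_{m_i^-,k} \right) = 0.
\]
For fixed $k$ I would partition the $(p+1)^d$ nodes of $K$ into the $(p+1)^{d-1}$ lines running parallel to the $k$th axis (obtained by fixing the remaining $d-1$ multi-indices), and verify the identity line by line. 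This is legitimate because each correction term $\alpha_{m_i^\pm,k}\overline{\mathtt{r}}_{m_i^\pm,k}$ references only subcell faces normal to the $k$th direction, and those faces lie on the single line containing node $i$.

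On one such line, label the nodes $i_1,\dots,i_{p+1}$ in increasing order along the $k$-axis, and label the interior subcell faces (all normal to direction $k$) as $(f_1,k),\dots,(f_p,k)$, where $(f_\ell,k)$ separates $i_\ell$ from $i_{\ell+1}$. The key observation is that, by construction, $m_{i_\ell}^+ = f_\ell = m_{i_{\ell+1}}^-$: the ``$+$'' face of node $i_\ell$ is the very same face as the ``$-$'' face of node $i_{\ell+1}$. Since $\alpha_{m,k}=\min\{\tilde\alpha_{i_1},\tilde\alpha_{i_2}\}$ and $\overline{\mathtt{r}}_{m,k}$ depend only on the face $(m,k)$ and not on which adjacent node it is approached from, the contribution $+\alpha_{f_\ell,k}\overline{\mathtt{r}}_{f_\ell,k}$ arising from node $i_\ell$ exactly cancels the contribution $-\alpha_{f_\ell,k}\overline{\mathtt{r}}_{f_\ell,k}$ arising from node $i_{\ell+1}$, for every interior face $(f_\ell,k)$.

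After this cancellation, only the two end contributions survive: the ``$-$'' face of $i_1$ and the ``$+$'' face of $i_{p+1}$, both of which are boundary faces of the element. The one place where genuine content enters is in showing that these vanish. For the leading end, the index set $\mathcal{M}(m_{i_1}^-,k)$ is empty, so $\overline{\mathtt{r}}_{m_{i_1}^-,k}=0$ directly from its defining sum. For the trailing end, $\mathcal{M}(m_{i_{p+1}}^+,k)$ consists of \emph{all} nodes on the line, so
\[
   \overline{\mathtt{r}}_{m_{i_{p+1}}^+,k} = \sum_{\ell=1}^{p+1} \mathtt{r}_{i_\ell,k} = 0
\]
by Proposition \ref{prop:line-zero-sum}. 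Hence both surviving terms are zero, the line sum vanishes, and summing over all lines and all directions $k$ gives the claimed identity \eqref{eq:subcell-zero-sum}. The argument is otherwise elementary; the main (and only real) obstacle is the vanishing of the trailing boundary flux, which is exactly where the line-wise zero-sum property of Proposition \ref{prop:line-zero-sum} is indispensable.
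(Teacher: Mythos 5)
Your proof is correct and follows essentially the same route as the paper's: each interior subcell face $(m,k)$ is adjacent to exactly two nodes, appears once with a $+$ sign and once with a $-$ sign, and carries the single well-defined coefficient $\alpha_{m,k}$, so all interior-face contributions cancel pairwise. The only difference is bookkeeping: the paper, having defined only \emph{interior} subcell faces, stops at this cancellation, whereas you additionally verify that the two leftover end-of-line terms vanish---the leading one because $\mathcal{M}(m_{i_1}^-,k)=\varnothing$, the trailing one by Proposition \ref{prop:line-zero-sum}---which is a legitimate and slightly more careful handling of the convention at the element boundary.
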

\begin{proof}
   Each subcell face $(m,k)$ is always adjacent to exactly two nodes, and note that the terms $\overline{\mathtt{r}}_{m_i^+,k}$ and $\overline{\mathtt{r}}_{m_i^-,k}$ appear in \eqref{eq:subcell-zero-sum} with opposite signs.
   Additionally, for any subcell face $(m,k)$ the term $\overline{\mathtt{r}}_{m,k}$ appears with coefficient $\alpha_{m,k}$.
   Therefore, summing over all nodes $i$, these terms cancel.
\end{proof}

Since $\alpha_{m_i^\pm,k} \leq \tilde{\alpha}_i$, and the nodal provisional limiting coefficient $\tilde{\alpha}_i$ is obtained by limiting the positive and negative parts of $\overline{\mathtt{r}}_{m_i^\pm,k}$ separately, we can see that the update given by \eqref{eq:subcell-fct} will satisfy the desired bounds.

\subsubsection{Comparison of elementwise and subcell limiting: advection equation}
\label{sec:subcell-comparison-advection}

We consider a simple one-dimensional test to compare the effectiveness of the subcell and elementwise limiting techniques.
We consider the advection equation $u_t + u_x = 0$ on the domain $[-1,1]$, with periodic boundary conditions, and initial conditions given by two square waves.
The discontinuities in the initial condition are aligned with the mesh.
We fix the number of degrees of freedom to be 320, and use polynomial degrees $p=0,1,3,7$.
We integrate until a final time of $t=2$, at which point the solution and initial condition coincide.
The results are shown in Figure \ref{fig:subcell-comparison}.
From these results, we notice that using the elementwise limiting strategy, increasing the polynomial degree while simultaneously coarsening the mesh does not improve solution quality beyond $p=1$.
This is intuitively the case because the limiting coefficients lack subcell resolution.
On the other hand, the subcell limiting technique results in increased accuracy as the polynomial degree is increased and as the mesh is coarsened.

\begin{figure}
   \centering
   \includegraphics{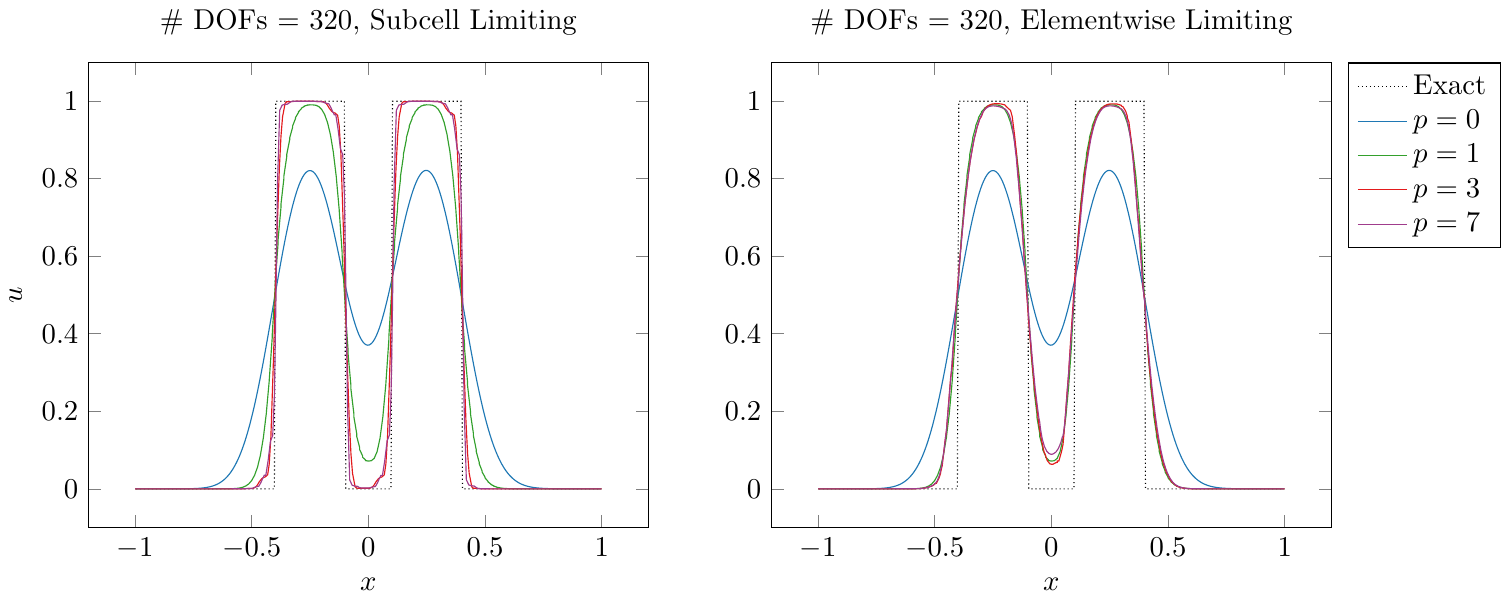}
   \caption{Comparison of subcell and elementwise limiting strategies, with fixed number of degrees of freedom, varying the polynomial degree.}
   \label{fig:subcell-comparison}
\end{figure}

Additionally, we consider the two-dimensional solid body rotation test case.
This test case is described in detail in Section \ref{sec:solid-body}.
We compare the subcell and elementwise limiting strategies using a coarse mesh with $25\times25$ elements, and polynomial degree $p=3$.
We integrate in time for one complete revolution, using a time step that is half of the CFL condition given by \eqref{eq:cfl-low}.
The results are shown in Figure \ref{fig:solid-body-subcell}.
It is clear that the subcell limiting strategy results in less diffusive results and better resolution of the features when compared with the elementwise limiting strategy.

\begin{figure}
   \centering
   \includegraphics[width=0.3\linewidth]{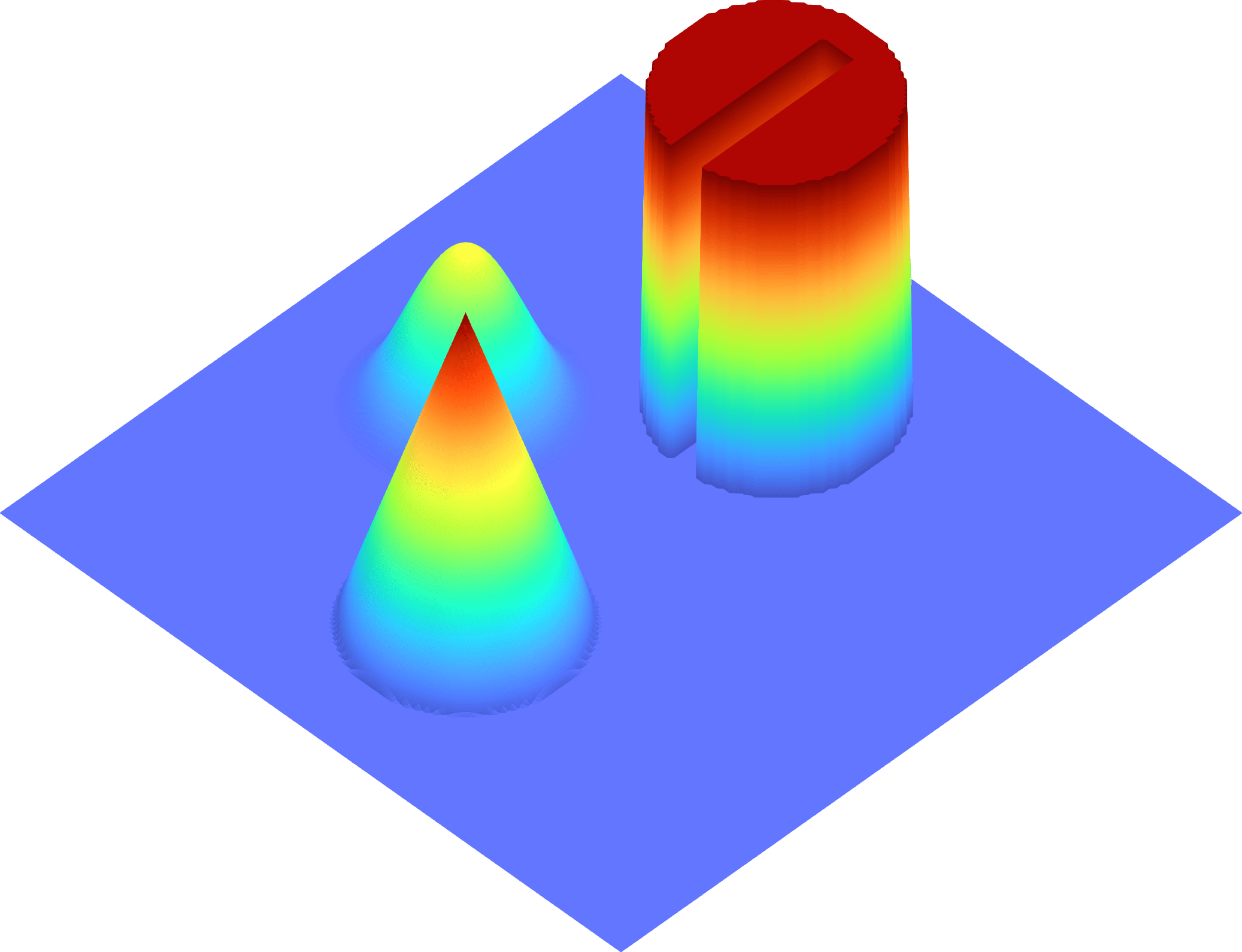}
   \hfill
   \includegraphics[width=0.3\linewidth]{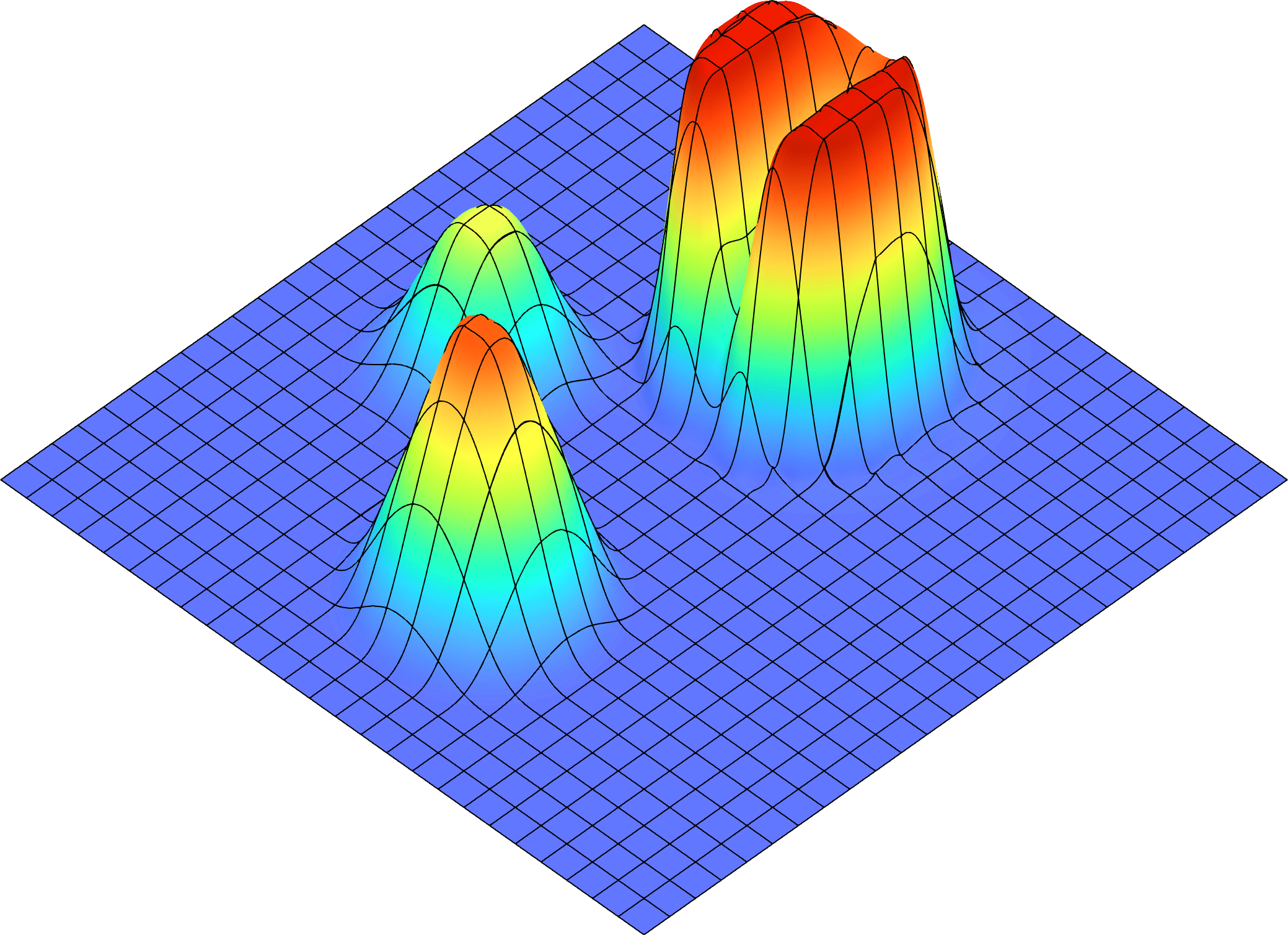}
   \hfill
   \includegraphics[width=0.3\linewidth]{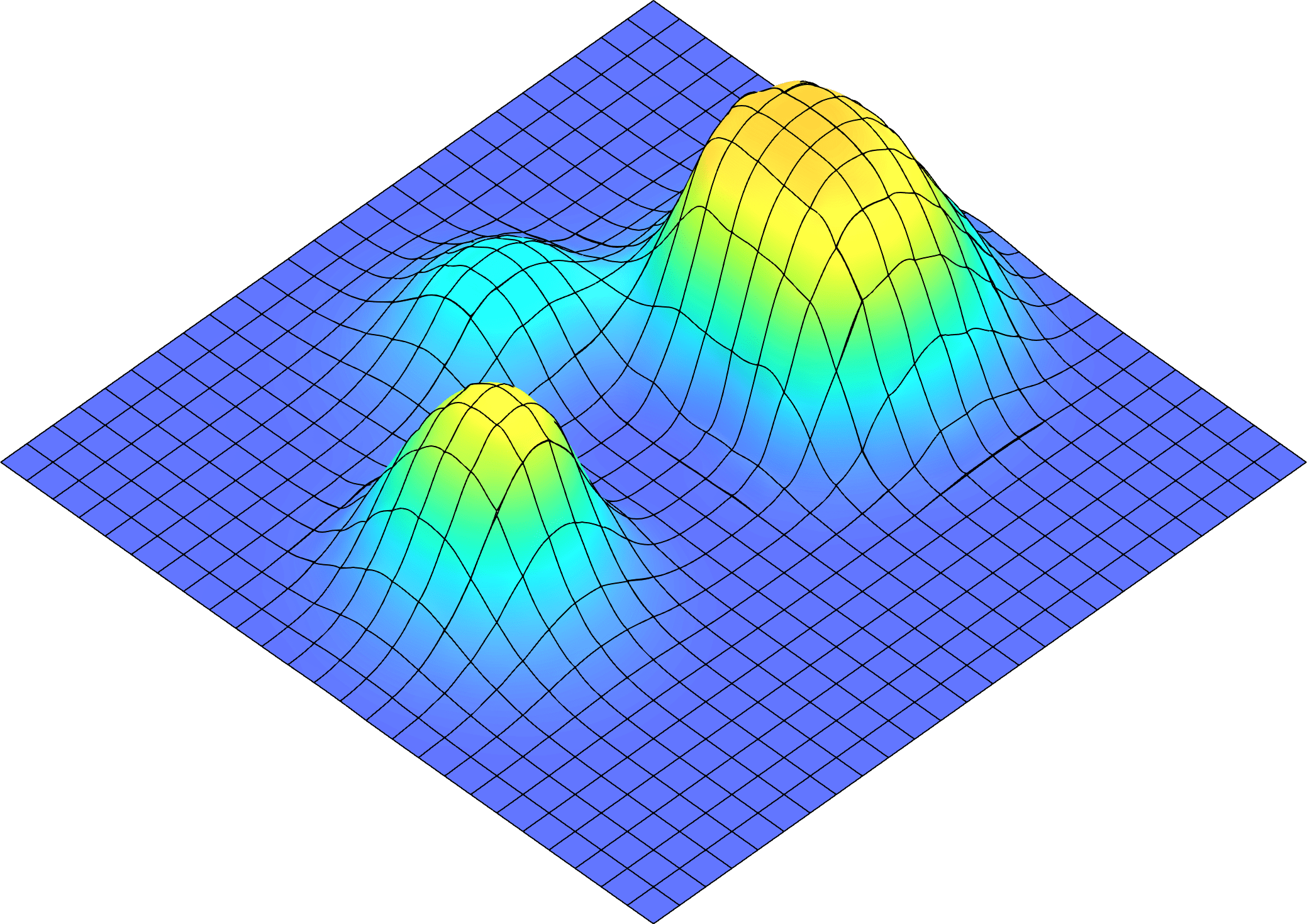}
   \caption{Comparison of subcell and elementwise limiting strategies for the solid body rotation test on a $25 \times 25$ mesh with polynomial degree $p=3$. Left: exact solution. Center: subcell limiting. Right: elementwise limiting.}
   \label{fig:solid-body-subcell}
\end{figure}

\subsection{Convex invariants for hyperbolic systems}
\label{sec:convex-invariants}

We now consider the more general case, where \eqref{eq:cons-law} represents a system of conservation laws, and we wish to enforce several \emph{convex constraints}.
In this case, the Zalesak-type limiter which we used for linear constraints is no longer sufficient.
Instead, we make use of the \emph{convex limiting} methodology developed in \cite{Guermond2018,Guermond2019}.

We have shown that for any convex invariant set $\mathcal{A}$ of \eqref{eq:cons-law} such that $\uu_i^n \in \mathcal{A}$ for all $i$, we have $\uu_i^{L,n+1} \in \mathcal{A}$ for all $i$.
We choose some finite subcollection of such sets, denoted $\mathcal{A}_\ell$.
We now describe a limiting procedure to ensure that if $\uu_i^n \in \mathcal{A}_\ell$ for all $i$ and all $\ell$, then $\uu_i^{n+1} \in \mathcal{A}_\ell$ for all $i$ and all $\ell$.
As before, we first begin by describing an elementwise limiting procedure, and then go on to develop a dimension-by-dimension subcell limiting procedure.

\subsubsection{Elementwise limiting for convex constraints}

The elementwise limiting strategy for convex constraints is almost identical to that for linear constraints.
For a given element $K$, we assign to each node $i \in \mathcal{E}(K)$ a provisional limiting coefficient $\tilde{\alpha}_i$ as follows.
Let $\tilde{\alpha}_i \in [0,1]$ be the largest value such that the limited nodal value $\tilde{\uu}_i^{n+1}$  given by
\begin{equation} \label{eq:convex-provisional-elementwise}
   \frac{\mathtt{m}_i}{\Delta t} \tilde{\uu}_i^{n+1}
   = \frac{\mathtt{m}_i}{\Delta t} \uu_i^n
   + \mathtt{r}_i^{L,n}
   + \tilde{\alpha}_i \left(
       \mathtt{r}_i^{H,n} - \mathtt{r}_i^{L,n}
     \right)
\end{equation}
belongs to each of the convex invariants, i.e.\ $\tilde{\uu}_i^{n+1} \in  \mathcal{A}_\ell$ for all $\ell$.
We then define the elementwise limiting coefficients by
\begin{equation}
   \alpha_i = \min_{j\in\mathcal{E}(K)} \tilde{\alpha}_j.
\end{equation}
Therefore, $\alpha_i \in [0, \tilde{\alpha_i}]$ for all $i$.
Since the sets $\mathcal{A}_\ell$ are convex, and $\uu_i^{L,n+1} \in \mathcal{A}_\ell$, we conclude that the flux-corrected nodal values $\uu_i^{n+1}$ defined by \eqref{eq:fct-update} also satisfy $\uu_i^{n+1} \in \mathcal{A}_\ell$ for all $\ell$.

\subsubsection{Subcell limiting for convex constraints}

As before, we make use of the dimension-by-dimension decomposition of residuals described in Section \ref{sec:subcell-linear}.
We begin by fixing an element $K$.
For each node $i \in \mathcal{E}(K)$, we define a provisional limiting coefficient $\tilde{\alpha}_i$,
according to the following procedure.
Recall that $\mathtt{r}_{i} = \sum_{k=1}^d \mathtt{r}_{i,k} = \sum_{k=1}^d \left( \overline{\mathtt{r}}_{m_i^+,k} - \overline{\mathtt{r}}_{m_i^-,k} \right)$.
Let $\gamma = 2d$, and then choose $\tilde{\alpha_i} \in [0,1]$ to be the largest value such that
the provisional updates given by
\begin{equation} \label{eq:convex-provisional}
   \frac{\mathtt{m}_i}{\Delta t} \tilde{\uu}_i^{n+1}
   = \frac{\mathtt{m}_i}{\Delta t} \uu_i^n
   + \mathtt{r}_i^{L,n}
   + \gamma \tilde{\alpha}_i
       \left( \pm \overline{\mathtt{r}}_{m_i^\pm,k} \right)
\end{equation}
for all $1 \leq d \leq k$ satisfy $\tilde{\uu}_i^{n+1} \in \mathcal{A}_\ell$ for all $\ell$.
As in the case of elementwise limiting, we note that since the sets $\mathcal{A}_\ell$ are convex, the same will hold for any limiting coefficient in the interval $[0,\tilde{\alpha}_i]$.

Let $1 \leq k \leq d$ be a given dimension, and let $(m,k)$ denote the index of a subcell face.
As in the case of linear constraints, let $\alpha_{m,k}$ be defined by $\alpha_{m,k} = \min\{\tilde{\alpha}_{i_1}, \tilde{\alpha}_{i_2} \},$ where $i_1$ and $i_2$ are nodes adjacent to the subcell face $(m,k)$.
The flux correction nodal values are given, as before, by equation \eqref{eq:subcell-fct}, which we write in the slightly modified form
\begin{equation} \label{eq:convex-fct}
    \uu_i^{n+1}
    = \uu_i^{L,n+1}
    + \frac{\Delta t}{\mathtt{m}_i}\sum_{k=1}^d \left(
          \alpha_{m_i^+,k} \overline{\mathtt{r}}_{m_i^+,k}
        - \alpha_{m_i^-,k} \overline{\mathtt{r}}_{m_i^-,k}
      \right).
\end{equation}
This definition gives rise to the following property.
\begin{prop}
   Let $\uu_i^{n+1}$ be defined by \eqref{eq:convex-fct}, where the limiting coefficients $\alpha_{m_i^\pm,k}$ are given by the procedure described above.
   Then, the flux-corrected nodal values $\uu_u^{n+1}$ satisfy $\uu_i^{n+1} \in \mathcal{A}_\ell$ for all $\ell$.
\end{prop}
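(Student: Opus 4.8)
The plan is to show that the subcell flux-corrected update \eqref{eq:convex-fct} keeps each nodal value inside every convex set $\mathcal{A}_\ell$ by exhibiting $\uu_i^{n+1}$ as a convex combination of points already known to lie in $\mathcal{A}_\ell$. The two ingredients that make this work are: first, that $\uu_i^{L,n+1} \in \mathcal{A}_\ell$ for all $\ell$ (established in Proposition \ref{prop:idp} under the CFL condition \eqref{eq:cfl-low}); and second, that the provisional coefficient $\tilde{\alpha}_i$ was chosen in \eqref{eq:convex-provisional} precisely so that each of the $2d$ single-direction updates $\uu_i^{L,n+1} + \gamma\tilde{\alpha}_i(\pm\overline{\mathtt{r}}_{m_i^\pm,k})\,\Delta t/\mathtt{m}_i$ lands in $\mathcal{A}_\ell$, with $\gamma = 2d$.

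First I would rewrite the correction term in \eqref{eq:convex-fct} as an average. Since $\gamma = 2d$ and there are exactly $2d$ signed subcell fluxes $\pm\overline{\mathtt{r}}_{m_i^\pm,k}$ incident to node $i$ (two per dimension, over $d$ dimensions), I would write
\begin{equation} \label{eq:convex-average}
   \uu_i^{n+1}
   = \frac{1}{2d} \sum_{k=1}^d \left(
       \uu_i^{L,n+1} + \frac{\gamma\,\Delta t}{\mathtt{m}_i}\,\alpha_{m_i^+,k}\,\overline{\mathtt{r}}_{m_i^+,k}
     + \uu_i^{L,n+1} - \frac{\gamma\,\Delta t}{\mathtt{m}_i}\,\alpha_{m_i^-,k}\,\overline{\mathtt{r}}_{m_i^-,k}
     \right),
\end{equation}
which is valid because $\frac{1}{2d}\cdot 2d\cdot\uu_i^{L,n+1} = \uu_i^{L,n+1}$ and $\frac{1}{2d}\cdot\gamma = 1$, recovering \eqref{eq:convex-fct} term by term. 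Each of the $2d$ summands is a point of the form $\uu_i^{L,n+1} \pm (\gamma\,\Delta t/\mathtt{m}_i)\,\alpha_{m_i^\pm,k}\,\overline{\mathtt{r}}_{m_i^\pm,k}$, so \eqref{eq:convex-average} exhibits $\uu_i^{n+1}$ as the arithmetic mean (a convex combination with equal weights) of these $2d$ points.

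Next I would verify that each summand lies in $\mathcal{A}_\ell$. By construction $\alpha_{m_i^\pm,k} = \min\{\tilde{\alpha}_{i_1},\tilde{\alpha}_{i_2}\} \le \tilde{\alpha}_i$ since $i$ is one of the two nodes adjacent to the face. The provisional value in \eqref{eq:convex-provisional} with coefficient $\tilde{\alpha}_i$ lies in $\mathcal{A}_\ell$, and each summand corresponds to the same expression but with the smaller coefficient $\alpha_{m_i^\pm,k}$. Because $\uu_i^{L,n+1} \in \mathcal{A}_\ell$ (the value obtained at coefficient $0$) and the value at coefficient $\tilde{\alpha}_i$ also lies in $\mathcal{A}_\ell$, convexity of $\mathcal{A}_\ell$ guarantees that every intermediate coefficient in $[0,\tilde{\alpha}_i]$, in particular $\alpha_{m_i^\pm,k}$, also yields a point in $\mathcal{A}_\ell$. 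Thus all $2d$ summands belong to $\mathcal{A}_\ell$, and since $\mathcal{A}_\ell$ is convex, so does their average $\uu_i^{n+1}$.

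The main obstacle, and the step I would treat most carefully, is the bookkeeping in \eqref{eq:convex-average}: confirming that the signed subcell fluxes partition correctly so that the equal-weight average genuinely reproduces \eqref{eq:convex-fct}, and that the factor $\gamma = 2d$ is exactly what is needed to cancel the $1/(2d)$ averaging weight. The analytic content — convexity plus $\uu_i^{L,n+1}\in\mathcal{A}_\ell$ plus the definition of $\tilde{\alpha}_i$ — is straightforward once the decomposition is set up, so the argument hinges on the combinatorial identity relating the number of incident subcell faces to $\gamma$.
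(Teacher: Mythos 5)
Your proof is correct and follows essentially the same route as the paper's: both rewrite \eqref{eq:convex-fct} as an equal-weight convex combination of the $2d$ single-face updates $\uu_i^{L,n+1} \pm \frac{\gamma\,\Delta t}{\mathtt{m}_i}\,\alpha_{m_i^\pm,k}\,\overline{\mathtt{r}}_{m_i^\pm,k}$, then use $\alpha_{m_i^\pm,k} \leq \tilde{\alpha}_i$ together with \eqref{eq:convex-provisional} and convexity of $\mathcal{A}_\ell$ to place each summand, and hence the average, in $\mathcal{A}_\ell$. Your explicit interpolation argument between the coefficient-$0$ point $\uu_i^{L,n+1}$ and the coefficient-$\tilde{\alpha}_i$ point is exactly what the paper invokes implicitly when it remarks that membership holds for any coefficient in $[0,\tilde{\alpha}_i]$.
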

\begin{proof}
We rewrite \eqref{eq:convex-fct} as the convex combination
\begin{equation} \label{eq:fct-convex-combination}
   \begin{aligned}
    \uu_i^{n+1}
    &= \sum_{k=1}^d \left(
       \frac{1}{2d} \uu_i^{L,n+1} + \frac{\Delta t}{\mathtt{m}_i} \alpha_{m_i^+,k} \overline{\mathtt{r}}_{m_i^+,k}
       + \frac{1}{2d} \uu_i^{L,n+1} - \frac{\Delta t}{\mathtt{m}_i} \alpha_{m_i^-,k} \overline{\mathtt{r}}_{m_i^-,k}
    \right) \\
    &= \frac{1}{\gamma} \sum_{k=1}^d \left(
       \uu_i^{L,n+1} + \frac{\Delta t}{\mathtt{m}_i} \gamma \alpha_{m_i^+,k} \overline{\mathtt{r}}_{m_i^+,k}
       \right)
       + \frac{1}{\gamma} \sum_{k=1}^d \left(
       \uu_i^{L,n+1} - \frac{\Delta t}{\mathtt{m}_i} \gamma \alpha_{m_i^-,k} \overline{\mathtt{r}}_{m_i^-,k}
    \right).
    \end{aligned}
\end{equation}
Note that $\alpha_{m_i^\pm,k} \leq \tilde{\alpha}_i$, and therefore, by \eqref{eq:convex-provisional}, each term in the convex combination in \eqref{eq:fct-convex-combination} lies within the convex invariant sets $\mathcal{A}_\ell$.
Therefore, the flux corrected nodal values $\uu_i^{n+1}$ defined by \eqref{eq:convex-fct} satisfy $\uu_i^{n+1}\in\mathcal{A}_\ell$ for all $\ell$.
\end{proof}

Additionally, note that Proposition \eqref{prop:subcell-conservation}, shown for the case of linear constraints, also applies to the subcell convex limiting strategy described here.
Therefore, the subcell convex limiting strategy is also conservative.

\subsection{Application: Euler equations}

We now consider the application of the above techniques to the compressible Euler equations.
We use conserved variables $\bm u = (\rho, \rho \bm v, \rho E)$, where $\rho$ is the density, $\bm v \in \R^d$ is the velocity, and $E$ is the total energy per unit mass.
For an ideal gas, the pressure $p$ is defined by the equation of state
\begin{equation} \label{eq:euler-eos}
  p = (\gamma - 1)\rho\left( E - \|\bm{v}\|^2/2\right),
\end{equation}
where $\gamma$ is the ratio of specific heats.
In this work, we take $\gamma = 1.4$.
The governing equations are given by \eqref{eq:cons-law} with flux function
\begin{equation} \label{eq:euler-flux}
   \bm F = \left( \begin{array}{c}
    \rho \bm{v} \\
    \rho \bm{v} \otimes \bm{v}^T + p I \\
    \rho H \bm{v}
  \end{array} \right),
\end{equation}
where $I$ is the $d \times d$ identity matrix, and $H = E + p / \rho$ is the stagnation enthalpy.
The specific internal energy $e$ is defined by $e = E - \frac{1}{2}\bm v^2$ (using the notation $\bm v^2 = \| \bm v \|_{\ell^2}^2$), and the specific entropy $s$ is given by $s = \log\left( e^{\frac{1}{\gamma-1}} \rho^{-1} \right)$.

The set $\mathcal{A}(r)$, defined by
\begin{equation}
   \mathcal{A}(r) = \left\{
      (\rho, \rho \bm v, \rho E) :
      \rho > 0, e > 0, s \geq r
   \right\}
\end{equation}
for any $r \geq 0$ is a convex invariant set for the Euler equations, and is an invariant domain for the Lax-Friedrichs method \cite{Guermond2016, Guermond2018, Frid2001}.
Note that, by Proposition \ref{prop:bar-states}, the bar states $\overline{\uu}_{ij}^{n+1}$ defined by \eqref{eq:bar-states} have positive density, internal energy, and satisfy the minimum principle on specific entropy.
Therefore, if $\uu_i^n \in \mathcal{A}(r)$, the low order solutions $\uu_i^{L,n+1}$ defined by \eqref{eq:idp-fe} satisfy $\uu_i^{L,n+1} \in \mathcal{A}(r)$.

We will use the FCT-based convex limiting techniques described above to ensure that the target solution $\uu_i^{n+1}$ also satisfies $\uu_i^{n+1} \in \mathcal{A}(r)$.
This is achieved through a two-part limiting process using a procedure similar to that described in \cite{Guermond2016}.
First, we limit the density using the Zalesak-type limiter from Section \ref{sec:linear-constraints}, enforcing bounds given by
\begin{equation}
   \rho_i^{\min} = \min_{j \in \mathcal{N}(i)} \overline{\rho}_{ij}^{n+1},
   \qquad\qquad
   \rho_i^{\max} = \max_{j \in \mathcal{N}(i)} \overline{\rho}_{ij}^{n+1}.
\end{equation}
Then, we enforce a minimum principle on the specific entropy using the convex limiting procedure from Section \ref{sec:convex-invariants}.
The lower bound for specific entropy is given by
\begin{equation}
   s_i^{\min} = \min_{j \in \mathcal{N}(i)} s_j^n,
\end{equation}
where $s_j^n = s(\bm u_j^n)$.
Determining the provisional nodal limiting coefficients (in \eqref{eq:convex-provisional-elementwise} or \eqref{eq:convex-provisional}) requires performing a line search.
By virtue of the convexity of the specific entropy $s$, this line search can be performed efficiently using Newton's method.
The minimum principle on specific entropy ensures that the internal energy is positive.

\subsection{Subcell resolution smoothness indicator}
\label{sec:smoothness}

FCT methods often suffer from a phenomenon known as \emph{peak clipping} \cite{Kuzmin2012,Book2012}.
Because the limiting techniques described above result in methods that are local extremum diminishing, smooth extrema tend to decrease in amplitude slightly with each time step.
Total variation diminishing (TVD) schemes are provably at most first-order accurate at smooth extrema \cite{Osher1984,Zhang2011}.
Smoothness indicators making use of second derivative information are a typical way to alleviate this difficulty \cite{Kuzmin2013,Lohmann2017,Hajduk2020}.
These smoothness indicators often use estimates of the second derivatives to determine regions where the solution is smooth.
In this work, we make use of a slightly different approach, based on the idea of artificial viscosity subcell shock capturing for discontinuous Galerkin methods \cite{Persson2006,Persson2013}.

Consider the solution $\bm u_h$ restricted to a single element $K\in\T_h$, denoted $\bm u_K = \bm u_h|_K$.
We represent $\bm u_K$ in terms of a modal (Legendre) basis, and define a truncated solution $\hat{\bm u}_K$, which is obtained from $\bm u_K$ by setting to zero the coefficients associated with highest-degree basis functions in any variable.
In other words, $\hat{\bm u}_K \in \Q_{p-1}(K)$.
The smoothness indicator is determined by how well $\hat{\bm u}_K$ approximates $\bm u_K$, based on the observation that the high modes of functions well-resolved on the mesh will quickly decay.
We define the smoothness indicator
\begin{equation}
   s_K = \log_{10} \left(
      \frac{\| \bm u_K - \hat{\bm u}_K \|_{L^2}^2}{ \| \bm u_K \|_{L^2}^2}
   \right).
\end{equation}
The indicator $s_K$ is used to define a smoothness factor $\varepsilon_K \in [0,1]$ by
\begin{equation}
   \varepsilon_K =
   \begin{cases}
      0, & \text{ if $s_K < s_0 - \kappa$,} \\
      \frac{1}{2} -  \frac{1}{2}\sin\left(\pi(s_K-s_0)/(2\kappa)\right),
         & \text{ if $s_0 - \kappa \leq s_K \leq s_0 + \kappa$,} \\
      1, & \text{ if $s_K > s_0 + \kappa$.}
   \end{cases}
\end{equation}
Here, $s_0$ and $\kappa$ are user-defined parameters.
In the context of artificial viscosity shock capturing, the choice of these parameters can have a significant impact on the quality of the method \cite{Klockner2011,Pazner2019}.
In this work, we choose $s_0 \sim \log_{10} (p^{-4})$ and $\kappa = 1$, which have be found to be effective choices for $p > 1$.
The factor $\varepsilon_K$ is used to relax the local bounds described in Section \ref{sec:bounds}.
For any $i \in \mathcal{E}(K)$, define relaxed bounds by
\begin{equation} \label{eq:relaxed-bounds}
   \hat{\uu}_i^{\min} = \varepsilon_K \uu_i^{\min} + (1 - \varepsilon_K) \mathtt{g}^{\min},
   \qquad\qquad
   \hat{\uu}_i^{\max} = \varepsilon_K \uu_i^{\max} + (1 - \varepsilon_K) \mathtt{g}^{\max},
\end{equation}
where $\mathtt{g}^{\min}$ and $\mathtt{g}^{\max}$ are relaxed bounds for the problem.
For example, these bounds may be given by the minimum and maximum values of the initial condition.
The relaxed bounds given by \eqref{eq:relaxed-bounds} have the property that they are equal to the local bounds in regions where the solution is rough or under-resolved, and are equal to the global problem bounds in regions where the solution is smooth.

In Figure \ref{fig:smoothness-comparison-1d}, we illustrate the effect of the smoothness indicator on two 1D advection problems.
For both problems, the global bounds are given by the global maximum and minimum of the solution.
The first problem has a smooth solution, and therefore the smoothness indicator will cause only global bounds to be enforced.
For this problem we use a very coarse mesh with only 4 elements and degree $p=7$ polynomials.
This greatly reduces the peak clipping effect, and results in a highly accurate solution.

The second test case consists of a discontinuous initial condition.
For this test case, we use $p=3$ on a mesh with 80 elements.
We note that enforcing only the global bounds in the vicinity of the shorter peak would also for the introduction of oscillations and new local maxima.
However, cells containing the discontinuity are detected by the smoothness indicator, causing local bounds to be enforced in these regions.
As a consequence, noticeable oscillations are not introduced when using the smoothness indicator in this case.

\begin{figure}
   \includegraphics{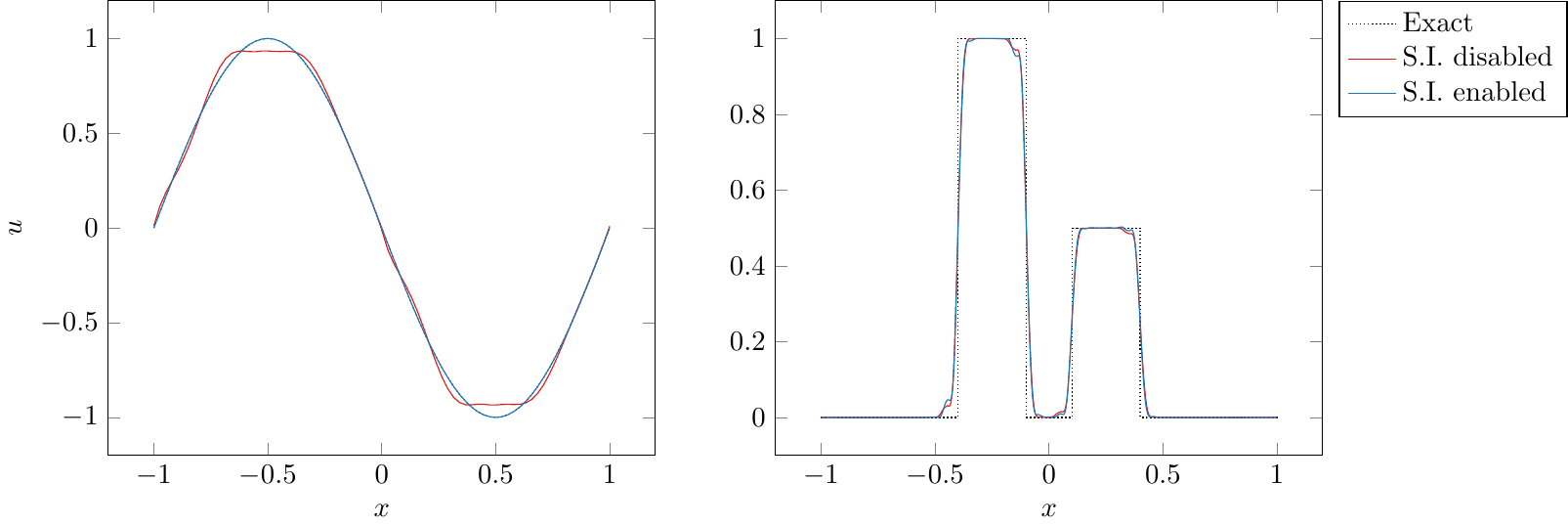}
   \caption{Comparison of two 1D advection problems, with and without smoothness indicators.
   For problems with smooth solutions (left plot), we see that using the smoothness indicator greatly reduces the effect of peak clipping.
   For problems with discontinuities (right plot), the the use of the smoothness indicator still results in a bounds preserving solution, and does not introduce noticeable oscillations.}
   \label{fig:smoothness-comparison-1d}
\end{figure}

Additionally, we study the effects of the smoothness indicator on the solid body rotation test.
As in Section \ref{sec:subcell-comparison-advection}, we use a coarse $25\times25$ mesh with polynomial degree $p=3$.
We use the subcell limiting technique described above, and compare the results with the smoothness indicator disabled and enabled.
The results are shown in Figure \ref{fig:solid-body-smoothness}.
We see that enabling the smoothness indicator results in sharper resolution of features such as the peak of the cone and the edges of the slotted cylinder.
The solution quality is not degraded by spurious oscillations, and the $L^1$ error is about 25\% smaller for this example.

\begin{figure}
   \centering
   \begin{minipage}{0.3\linewidth}
      \centering
      Smoothness Indicator Disabled\\[12pt]
      \includegraphics[width=\linewidth]{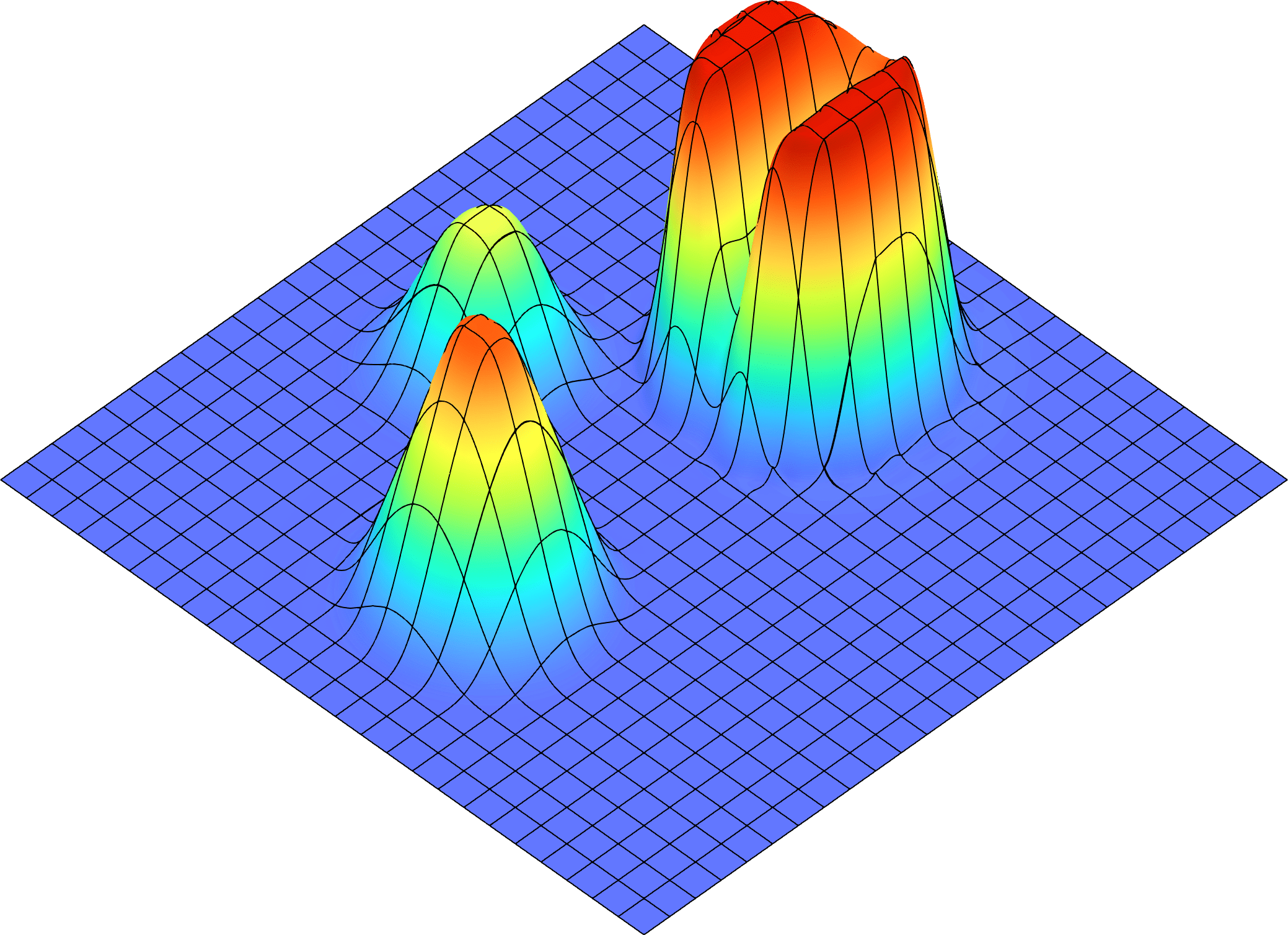}\\[12pt]
      \pbox{\linewidth}{ \small
      $[\min u_h, \max u_h] = [0, 0.95]$\\
      $\|u - u_h\|_{L^1} = 2.6\times10^{-2}$}
   \end{minipage}
   \hspace{2cm}
   \begin{minipage}{0.3\linewidth}
      \centering
      Smoothness Indicator Enabled\\[12pt]
      \includegraphics[width=\linewidth]{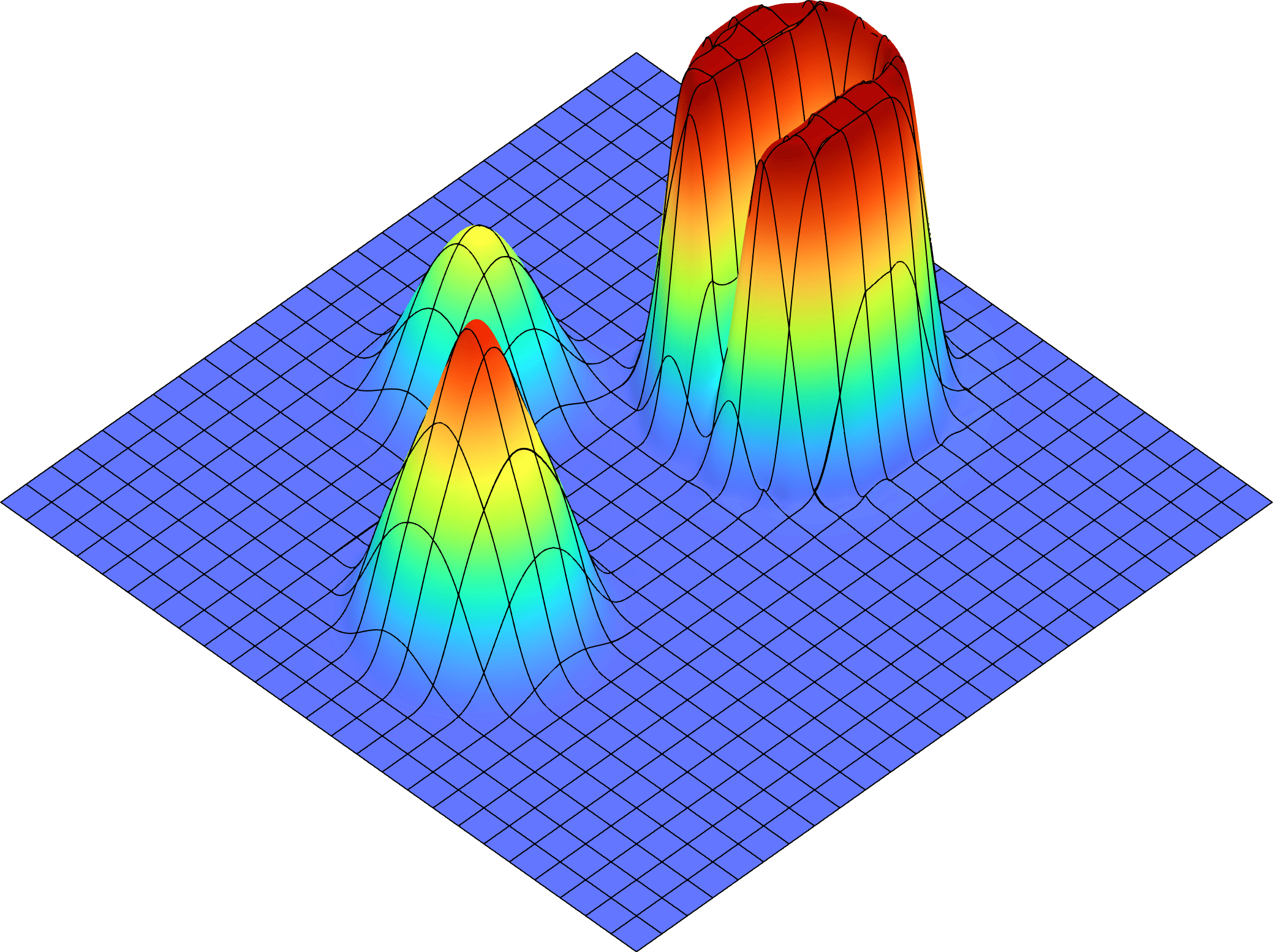}\\[12pt]
      \pbox{\linewidth}{ \small
      $[\min u_h, \max u_h] = [0, 1]$\\
      $\|u - u_h\|_{L^1} = 1.9\times10^{-2}$}
   \end{minipage}

   \caption{Solid body rotation test with smoothness indicator disabled (left) and enabled (right).
   Enabling the smoothness indicator results in sharper resolution of the features and gives a less dissipative solution.}
   \label{fig:solid-body-smoothness}
\end{figure}

\section{Numerical examples}
\label{sec:results}

The method was implemented in the MFEM finite element framework~\cite{Anderson2019}, and is tested on a variety of benchmark problems, including scalar problems and hyperbolic systems, in 1D and 2D.
For these test cases, unless stated otherwise, we integrate in time using the third-order SSP Runge-Kutta method \cite{Gottlieb2001}.
The time step is chosen according to \eqref{eq:cfl-low}, with a CFL constant of $1/2$, i.e.\ $\Delta t = \frac{1}{2}\min_i \frac{\mathtt{m}_i}{2 \hat{\mathtt{d}}_{ii}}$.

\subsection{Convergence tests}

We first study the accuracy of the method on a simple 1D test case for the linear advection equation
\begin{equation}
  u_t + u_x = 0.
\end{equation}
The domain is $\Omega = [0,1]$, and the initial condition is $u_0(x) = \sin(2 \pi x)$.
Periodic boundary conditions are enforced.
To avoid temporal errors, the equations are integrated using an eighth order Runge-Kutta method, until a final time of $t=1$, at which point the exact solution coincides with the initial condition.
We use polynomial degrees $p = 0, 1, \ldots, 5$, on a sequence of uniformly refined meshes.
We compare the results both with and without the smoothness indicator (as described in Section \ref{sec:smoothness}).
The $L^1$ error is computed by comparing with the exact solution, and the results are shown in Table \ref{tab:convergence}.
Using the smoothness indicator, we observe the expected asymptotic rates for all cases except $p=1$ (for which case we do not expect the subcell resolution smoothness indicator to perform well.)
Without the smoothness indicator, the local maximum principle is strictly enforced.
As a consequence, only first-order convergence is observed for these cases, consistent with other results reported in the literature \cite{Khobalatte1994}.

\begin{table}
  \caption{
  Convergence results for smooth test case for the 1D advection equation, showing results with and without the smoothness indicator.}
  \label{tab:convergence}
  
  \centering
  \vspace{\floatsep}
  \setlength{\tabcolsep}{12pt}
  
\begin{tabular}{crlclc}
\toprule
& & \multicolumn{2}{c}{S.I. Enabled} & \multicolumn{2}{c}{S.I. Disabled} \\
\multicolumn{2}{r}{Elements} & \multicolumn{1}{c}{$L^1$ error} & Rate & \multicolumn{1}{c}{$L^1$ error} & Rate \\
\midrule
\multirow{4}{*}{$p=0$} & 8 & $6.02 \times 10^{-1}$ &  ---  & $6.02 \times 10^{-1}$ &  --- \\
& 16 & $4.54 \times 10^{-1}$ & 0.41 & $4.54 \times 10^{-1}$ & 0.41\\
& 32 & $2.93 \times 10^{-1}$ & 0.63 & $2.93 \times 10^{-1}$ & 0.63\\
& 64 & $1.69 \times 10^{-1}$ & 0.80 & $1.69 \times 10^{-1}$ & 0.80\\
\midrule
\multirow{4}{*}{$p=1$} & 8 & $2.80 \times 10^{-1}$ &  ---  & $2.80 \times 10^{-1}$ &  --- \\
& 16 & $1.08 \times 10^{-1}$ & 1.37 & $1.08 \times 10^{-1}$ & 1.37\\
& 32 & $4.95 \times 10^{-2}$ & 1.13 & $4.95 \times 10^{-2}$ & 1.13\\
& 64 & $2.47 \times 10^{-2}$ & 1.00 & $2.47 \times 10^{-2}$ & 1.00\\
\midrule
\multirow{4}{*}{$p=2$} & 8 & $5.28 \times 10^{-2}$ &  ---  & $6.55 \times 10^{-2}$ &  --- \\
& 16 & $2.33 \times 10^{-2}$ & 1.18 & $2.47 \times 10^{-2}$ & 1.41\\
& 32 & $9.05 \times 10^{-3}$ & 1.36 & $1.14 \times 10^{-2}$ & 1.12\\
& 64 & $2.56 \times 10^{-3}$ & 1.82 & $5.55 \times 10^{-3}$ & 1.03\\
\midrule
\multirow{4}{*}{$p=3$} & 8 & $2.31 \times 10^{-4}$ &  ---  & $1.64 \times 10^{-2}$ &  --- \\
& 16 & $1.14 \times 10^{-5}$ & 4.34 & $6.37 \times 10^{-3}$ & 1.37\\
& 32 & $7.10 \times 10^{-7}$ & 4.01 & $3.20 \times 10^{-3}$ & 0.99\\
& 64 & $4.43 \times 10^{-8}$ & 4.00 & $1.60 \times 10^{-3}$ & 1.00\\
\midrule
\multirow{4}{*}{$p=4$} & 8 & $6.48 \times 10^{-6}$ &  ---  & $1.15 \times 10^{-2}$ &  --- \\
& 16 & $2.05 \times 10^{-7}$ & 4.98 & $3.04 \times 10^{-3}$ & 1.92\\
& 32 & $6.46 \times 10^{-9}$ & 4.99 & $1.27 \times 10^{-3}$ & 1.26\\
& 64 & $2.02 \times 10^{-10}$ & 5.00 & $6.19 \times 10^{-4}$ & 1.04\\
\midrule
\multirow{4}{*}{$p=5$} & 8 & $2.06 \times 10^{-7}$ &  ---  & $7.82 \times 10^{-3}$ &  --- \\
& 16 & $3.19 \times 10^{-9}$ & 6.01 & $2.16 \times 10^{-3}$ & 1.85\\
& 32 & $4.98 \times 10^{-11}$ & 6.00 & $7.32 \times 10^{-4}$ & 1.56\\
& 64 & $8.59 \times 10^{-13}$ & 5.86 & $2.99 \times 10^{-4}$ & 1.29\\
\bottomrule
\end{tabular}
\end{table}

\subsection{1D Euler}

For a first set of initial test cases, we consider the one-dimensional Euler equations, given by
\begin{equation}
  \bm u_t + \bm F_x = 0,
\end{equation}
where $\bm u = (\rho, \rho v, \rho E)^\tr$, $\bm F = (\rho v, \rho v^2 + p, v(\rho E + p))^\tr$.
The pressure is given by the equation of state $p = (\gamma-1)\rho(E - v^2/2)$.

\subsubsection{Sod shock tube}

We first consider the classical Sod shock tube problem \cite{Sod1978}.
The domain is taken to be $\Omega = [-1/2, 1/2]$, and the initial conditions in primitive variables are given by
\begin{equation}
  \bm u_0(x) = \begin{cases}
    \bm u_L, \qquad x < 0, \\
    \bm u_R, \qquad x \geq 0,
    \end{cases}
  \qquad
  \bm u_L =
  \left( \begin{array}{c} \rho_L \\ v_L \\ p_L \end{array} \right)
  = \left( \begin{array}{c} 1 \\ 0 \\ 1 \end{array} \right),
  \qquad
  \bm u_R =
  \left( \begin{array}{c} \rho_R \\ v_R \\ p_R \end{array} \right)
  = \left( \begin{array}{c} 1/8 \\ 0 \\ 1/10 \end{array} \right).
\end{equation}
This problem gives rise to a rarefaction wave, a contact discontinuity, and a shock.
We integrate in time until $t=0.18$ with the number of degrees of freedom set to 256, using polynomial degrees $p=0,1,3,7$, so that the highest degree run is performed on a mesh with 32 elements.
Figure \ref{fig:sod} compares the final density and pressure with the exact solution to the Riemann problem.
From Figure \ref{fig:sod}, we observe that increasing the polynomial degree while simultaneously coarsening the mesh leads to somewhat better resolution of discontinuities in the solution, in particular at the contact discontinuity.
The solutions obtained using $p=3$ and $p=7$ are largely indistinguishable.

\begin{figure}
  \centering
  \includegraphics{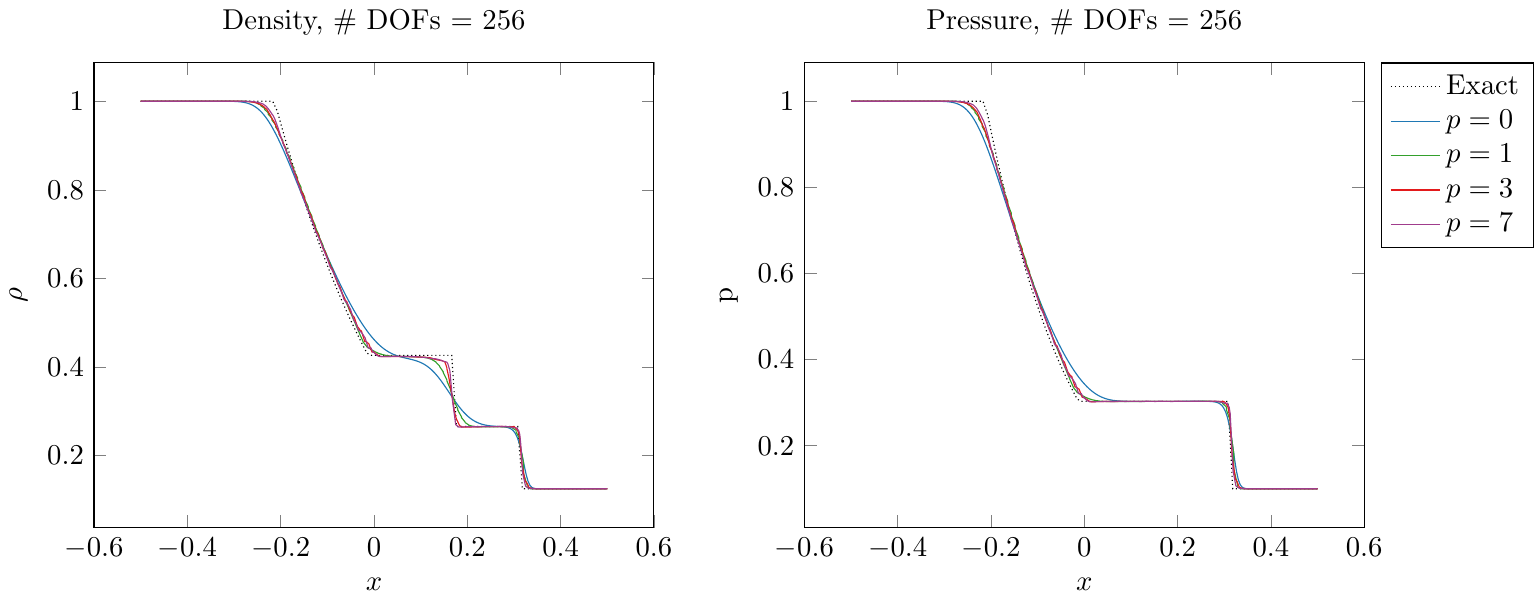}
  \caption{Density and pressure for the Sod shock tube problem at $t=0.18$.}
  \label{fig:sod}
\end{figure}

\subsubsection{Sine-shock interaction}

We now consider the Shu-Osher sine-shock interaction problem \cite{Shu1989}.
The domain is $\Omega = [-5, 5]$ and the initial condition is given by
\begin{equation}
  \bm u_0(x) = \begin{cases}
    \bm u_L, &\quad x < -4, \\
    \bm u_R, &\quad x \geq -4,
    \end{cases}
  \quad
  \left( \begin{array}{c} \rho_L \\ v_L \\ p_L \end{array} \right)
  = \left( \begin{array}{c} 3.857143 \\ 2.629369 \\ 10.3333 \end{array} \right),
  \quad
  \left( \begin{array}{c} \rho_R \\ v_R \\ p_R \end{array} \right)
  = \left( \begin{array}{c} 1 + 0.2\sin(5x) \\ 0 \\ 1 \end{array} \right).
\end{equation}
This test case can be challenging for shock-capturing schemes because the solution contains strong and weak shocks, as well as smooth oscillatory structures.
We integrate in time until $t=1.8$ with the number of degrees of freedom set to 512, using polynomial degrees $p=0,1,3,7$.
The $p=7$ run is performed on a mesh with 64 elements.
The final density and pressure are shown in Figure \ref{fig:sine-shock}.
These solutions are compared with a reference solution computed using $p=0$ with a fine mesh of 20{,}000 elements.
The strong shock is resolved well with all of the polynomial degrees, however the smooth structures are better resolved by using higher degree polynomials.
Some peak clipping is observed on the post-shock oscillations.

\begin{figure}
  \centering
  \includegraphics{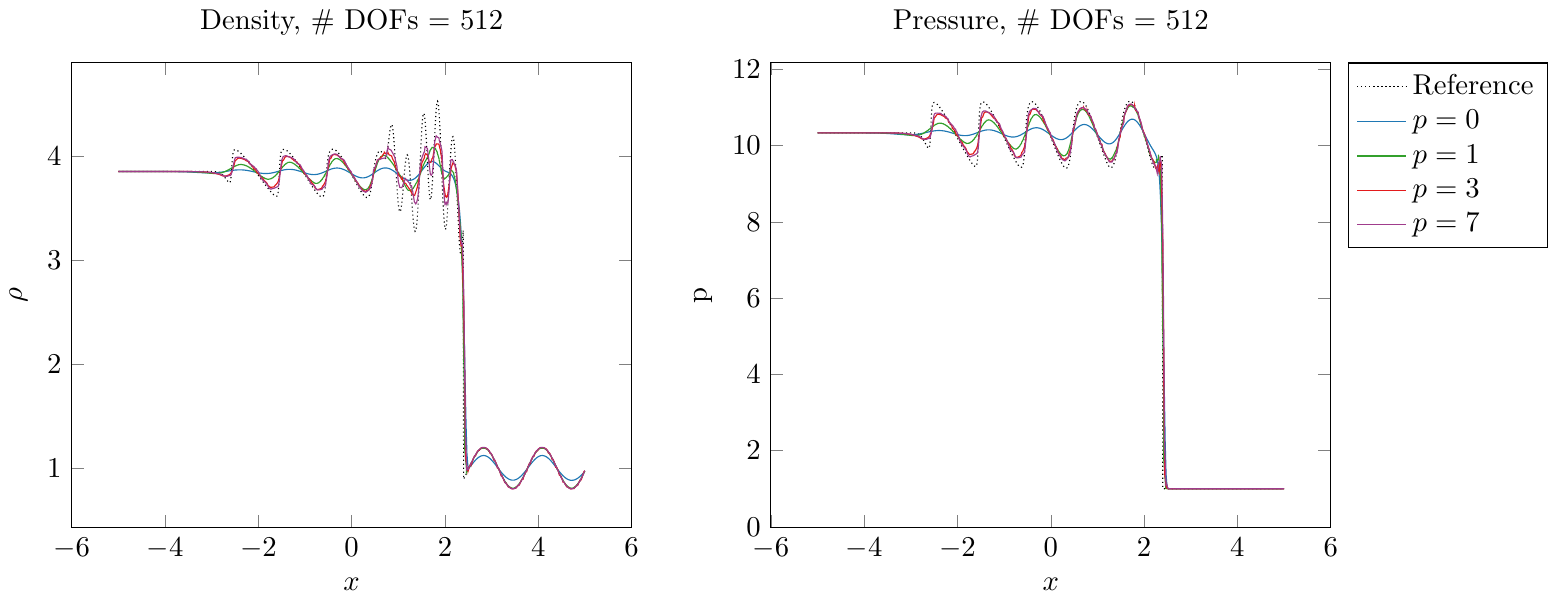}
  \caption{Density and pressure for the sine-shock interaction problem at $t=1.8$.}
  \label{fig:sine-shock}
\end{figure}

\subsection{Buckley-Leverett equation}

The Buckley-Leverett equation is a hyperbolic conservation law with non-convex flux function used to model porous media flow, defined by
\begin{equation}
  \frac{\partial u}{\partial t}
  + \frac{\partial}{\partial x} \left(
    \frac{4u^2}{4u^2 + (1-u)^2}
  \right) = 0.
\end{equation}
We consider the Riemann problem
\[
  u_0(x) = \begin{cases}
    -3, &\quad x < 0, \\
    3,  &\quad x \geq 0.
  \end{cases}
\]
We fix the number of degrees of freedom to be 256, and integrate in time until $t=0.25$ using polynomial degrees $p=0,1,3,7$.
The solution is shown in Figure \ref{fig:buckley}.
For this test case, the solutions obtained using the high-order flux-limited DG method compare well to the reference solution computed with $p=0$ on a mesh with 10{,}000 elements.

\begin{figure}
  \centering
  \includegraphics{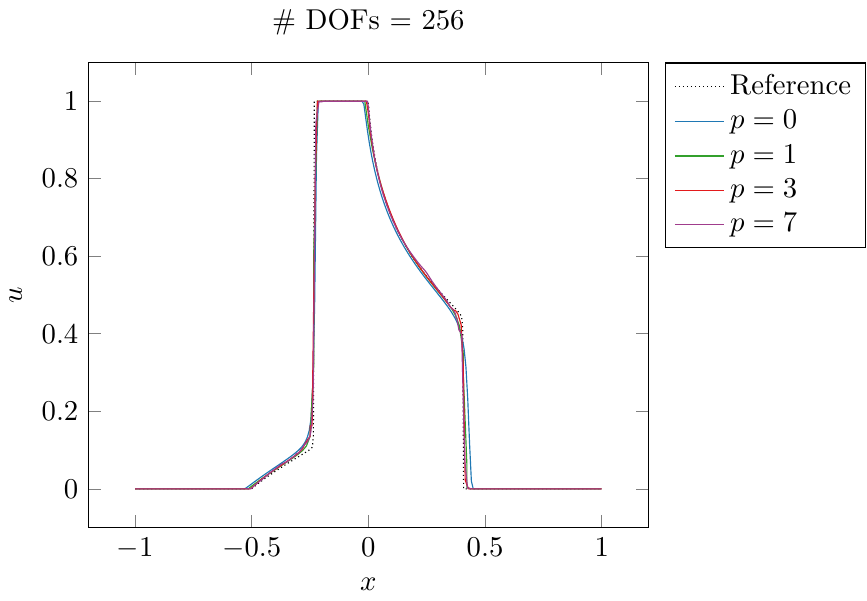}
  \caption{Riemann problem for the Buckley-Leverett equation at $t=0.25$ using $p=0,1,3,7$ with 256 degrees of freedom.}
  \label{fig:buckley}
\end{figure}

\subsection{2D linear advection: solid body rotation}
\label{sec:solid-body}

We consider the solid body rotation test, first proposed by LeVeque, which has since become a standard benchmark test case to assess the resolution of both smooth and discontinuous features \cite{LeVeque1996}.
The governing equation is the two-dimensional linear advection equation
\begin{equation}
  \frac{\partial u}{\partial t} + \nabla \cdot \left( \bm v u \right) = 0,
\end{equation}
with velocity field $\bm v = ( 2\pi(1/2 -y), 2 \pi (x - 1/2))^\tr$ in the domain $\Omega = [0,1] \times [0,1]$.
The initial conditions consist of a smooth bump, a cone, and a slotted cylinder.
Each of these bodies is defined in a disk of radius $r_0 = 0.15$, centered at some point $(x_0, y_0) \in \Omega$.
Let $r(x,y) = \frac{1}{r_0}\sqrtb{(x-x_0)^2 + (y-y_0)^2}$ denote the normalized distance to the center point.
The smooth bump is centered at $(x_0, y_0) = (0.25, 0.5)$, and is defined by
\[
  u_{\rm bump}(x,y) = \frac{1 + \cos(\pi r(x,y))}{4}.
\]
The cone is centered at $(x_0, y_0) = (0.5, 0.25)$, and is defined by
\[
  u_{\rm cone}(x,y) = 1 - r(x,y).
\]
The slotted cylinder is centered at $(x_0, y_0) = (0.5, 0.75)$, and is defined by
\[
  u_{\rm cyl}(x,y) = \begin{cases}
    1, &\quad\text{if $|x - x_0| \geq 0.025$ or $y >= 0.85$,}\\
    0, &\quad\text{otherwise.}
  \end{cases}
\]
The initial condition $u_0$ is defined using the above functions on the each of the three disks, and is set to zero elsewhere.
We integrate in time until $t=1$, at which point a full revolution has completed.
In Figure \ref{fig:solid-body-comparison}, we show the solution obtain on a $64 \times 64$ Cartesian grid using the high-order DG method \eqref{eq:dg-strong}, the low-order invariant domain preserving method \eqref{eq:dg-idp}, and the flux-corrected method \eqref{eq:fct-update} with subcell limiting.
The standard (unlimited) DG method results in clear oscillations and overshoots and undershoots, in particular around the slotted cylinder.
This is evident from the minimum and maximum values after one revolution, which are -0.21 and 1.16, respectively.
The low-order IDP method is bounds preserving, but clearly very dissipative.
The flux-corrected method, obtained by performing a subcell bounds-preserving blending of the low-order IDP method and the high-order target method, results in a solution without oscillations or new extrema.
The $L^1$ accuracy of the flux-corrected method is comparable to that of the high-order method for this problem.

\begin{figure}
  \centering

  \begin{minipage}[b]{0.3\linewidth}
     \centering
     High-Order DG\\[12pt]
     \includegraphics[width=\linewidth]{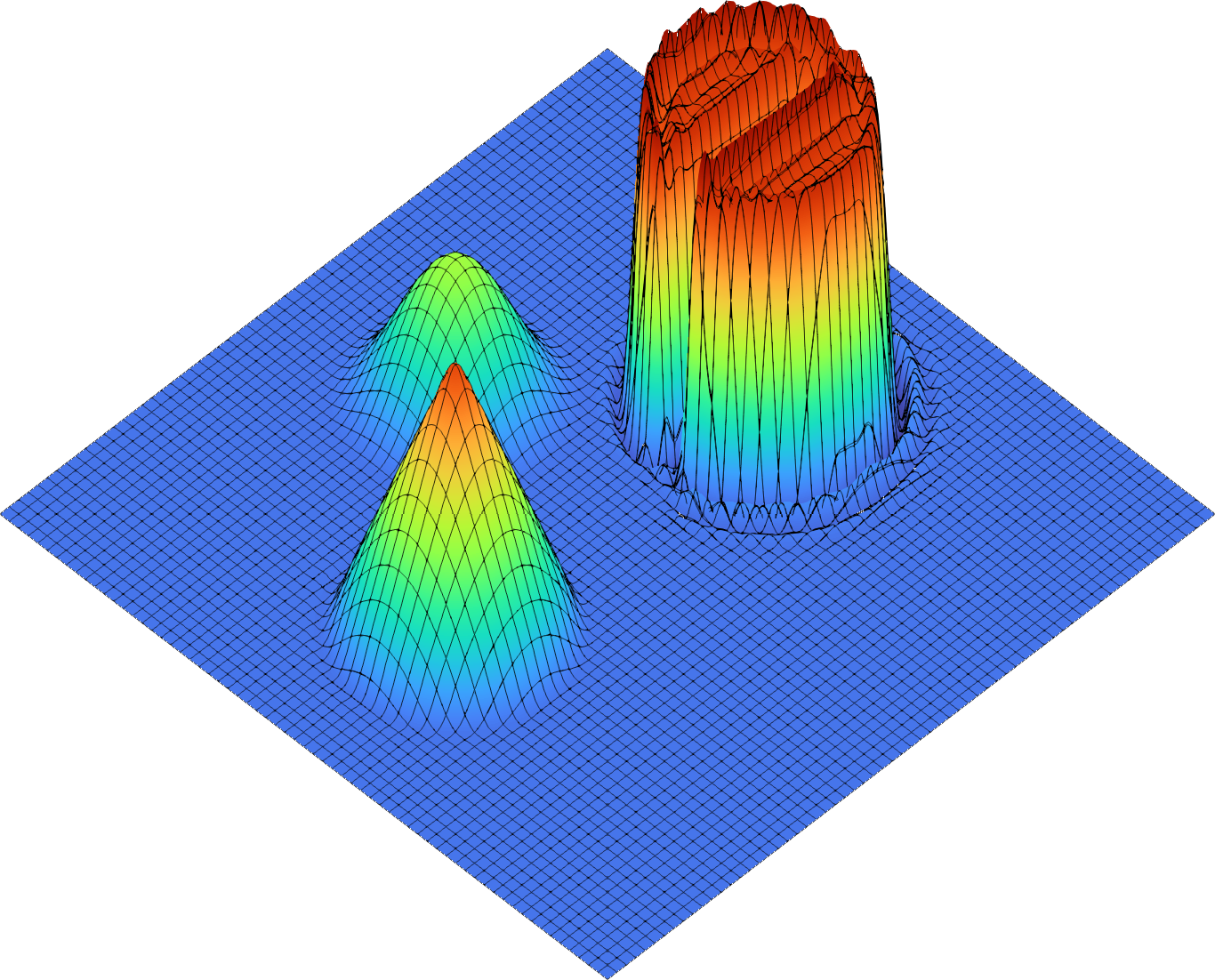}\\[12pt]
     \pbox{\linewidth}{ \small
     $[\min u_h, \max u_h] = [-0.21, 1.16]$\\
     $\|u - u_h\|_{L^1} = 1.0\times10^{-2}$}
  \end{minipage}
  \hspace*{\fill}
  \begin{minipage}[b]{0.3\linewidth}
     \centering
     Low-Order IDP\\[12pt]
     \includegraphics[width=\linewidth]{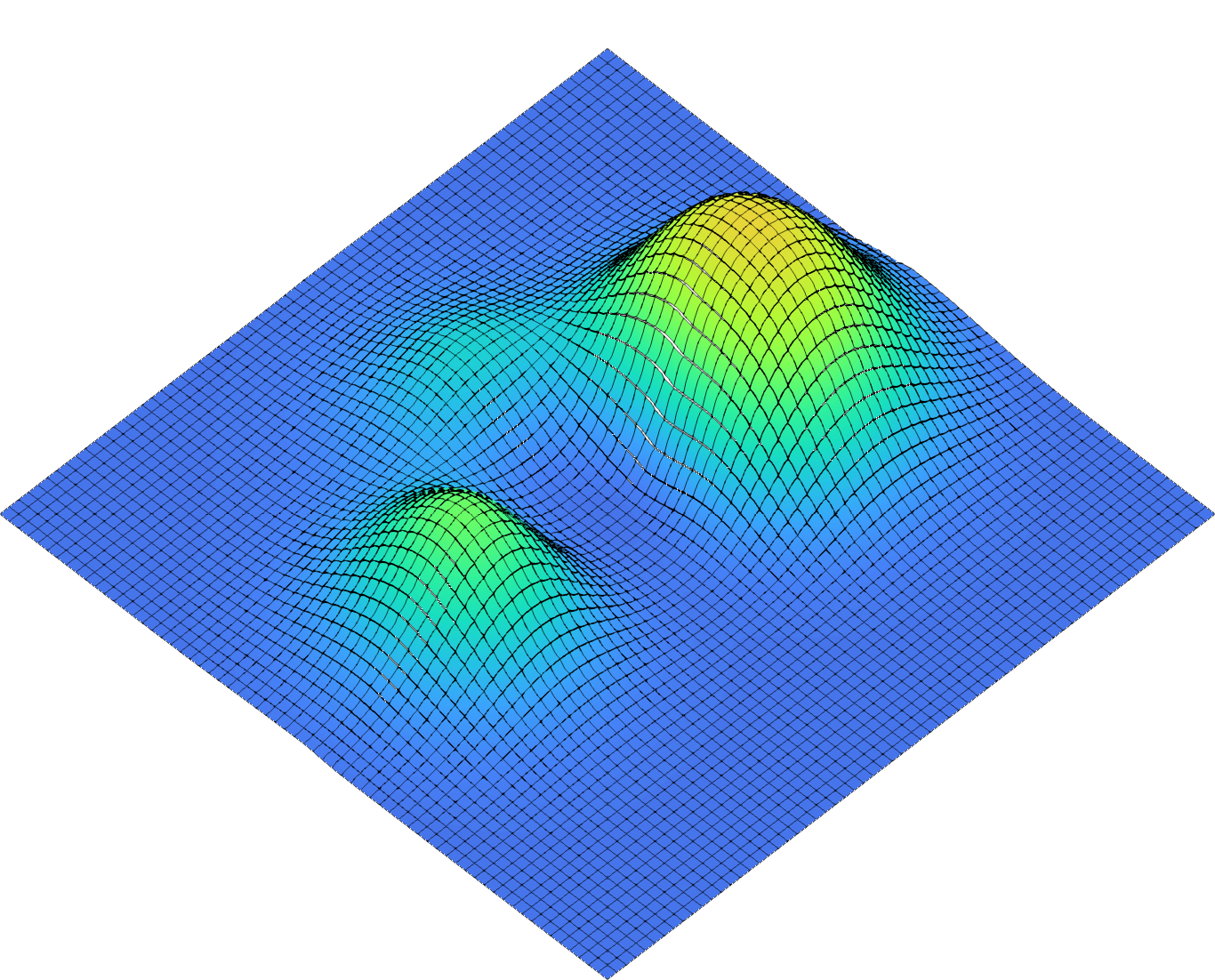}\\[12pt]
     \pbox{\linewidth}{ \small
     $[\min u_h, \max u_h] = [0, 0.66]$\\
     $\|u - u_h\|_{L^1} = 8.5\times10^{-2}$}
  \end{minipage}
  \hspace*{\fill}
  \begin{minipage}[b]{0.3\linewidth}
     \centering
     FCT\\[12pt]
     \includegraphics[width=\linewidth]{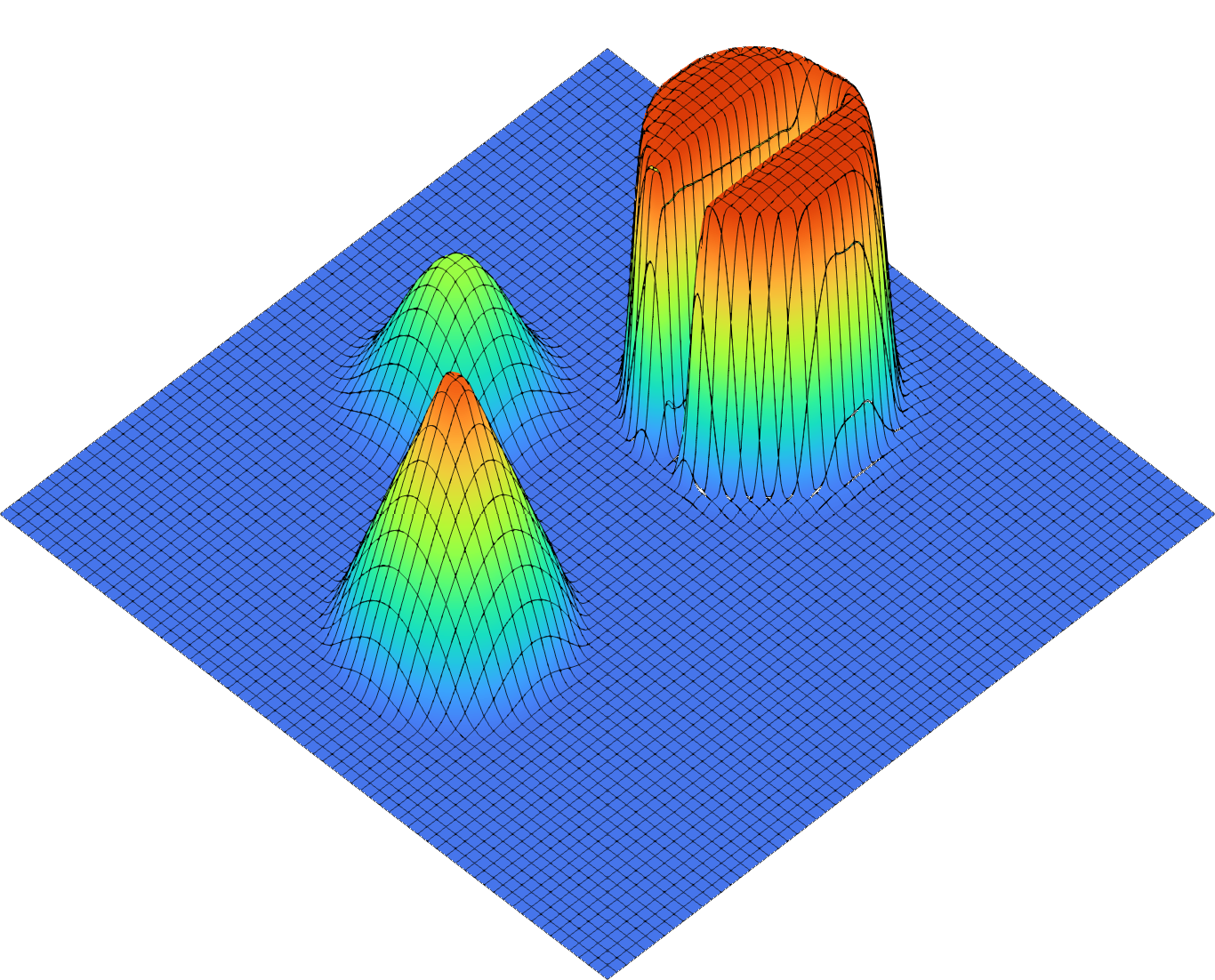}\\[12pt]
     \pbox{\linewidth}{ \small
     $[\min u_h, \max u_h] = [0, 1]$\\
     $\|u - u_h\|_{L^1} = 1.1\times10^{-2}$}
  \end{minipage}

  \caption{Solid body rotation test for the 2D linear advection equation on a $64 \times 64$ Cartesian grid with $p=3$.
    Left panel: high-order DG-SEM with no limiting.
    Center panel: low-order sparsified IDP method.
    Right panel: bounds-preserving subcell limiter.}
  \label{fig:solid-body-comparison}
\end{figure}

\subsection{2D Burgers equation}

Consider the two-dimensional Burgers equation
\begin{equation}
  \frac{\partial u}{\partial t} + \nabla \cdot \left( \frac{1}{2} u^2 \bm v \right) = 0,
\end{equation}
with constant velocity vector $\bm v = (1,1)^\tr$ in the domain $\Omega = [0,1] \times [0,1]$.
We consider the piecewise constant initial condition
\begin{equation}
  u_0(x,y) = \begin{cases}
    -1   \quad& \text{if $x > 0.5$ and $y > 0.5$,}\\
    -0.2 \quad& \text{if $x \leq 0.5$ and $y > 0.5$,}\\
     0.5 \quad& \text{if $x \leq 0.5$ and $y \leq 0.5$,}\\
     0.8 \quad& \text{if $x > 0.5$ and $y \leq 0.5$.}
  \end{cases}
\end{equation}
This problem was considered in \cite{Shu1989,Guermond2011}.
The exact solution (determined analytically, cf.~\cite{Wagner1983}) is imposed as boundary conditions, and the equations are integrated until a final time of $t = 0.5$.
We use polynomial degree $p=2,5,11$ on a sequence of increasingly coarse Cartesian grids $(n_{\rm 1D} = 40, 20, 10)$, with total number of degrees of freedom equal to $120^2$.
The solution is shown in Figure \ref{fig:burgers2d}.
When compared with a reference solution computed on a fine mesh ($p=1, n_{\rm 1D} = 512$), the solution is well-resolved even on the coarse mesh.
Discontinuities in the solution are captured well even when not aligned with element boundaries.
\begin{figure}
  \begin{minipage}{0.8\linewidth}
    \def\burgersfigscale{0.3}

    \hspace*{\fill}
    \begin{minipage}{\burgersfigscale\linewidth}
      \centering
      \includegraphics[width=\linewidth]{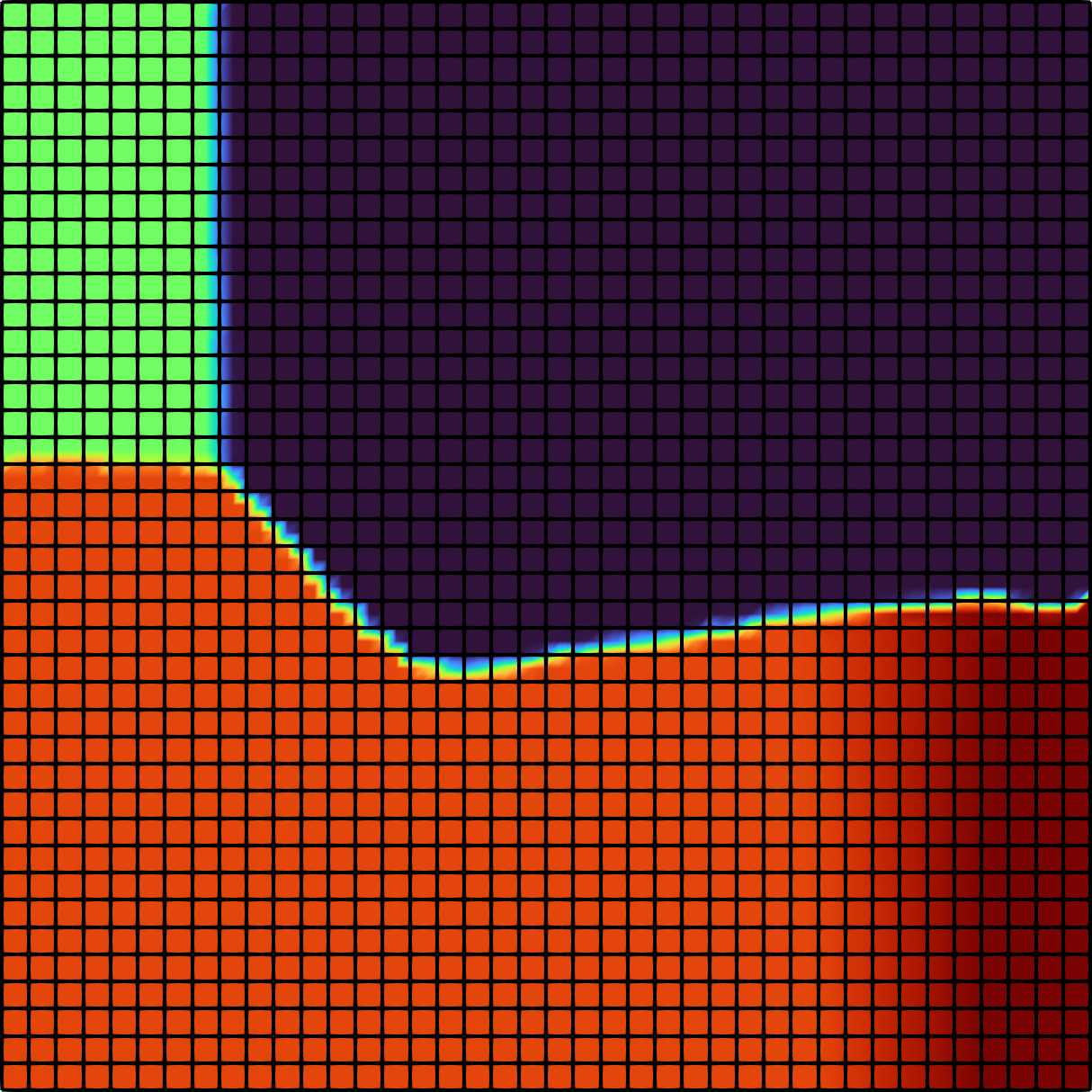}

      $p=2, n_{\rm 1D} = 40$
    \end{minipage}
    \hspace*{\fill}
    \begin{minipage}{\burgersfigscale\linewidth}
      \centering
      \includegraphics[width=\linewidth]{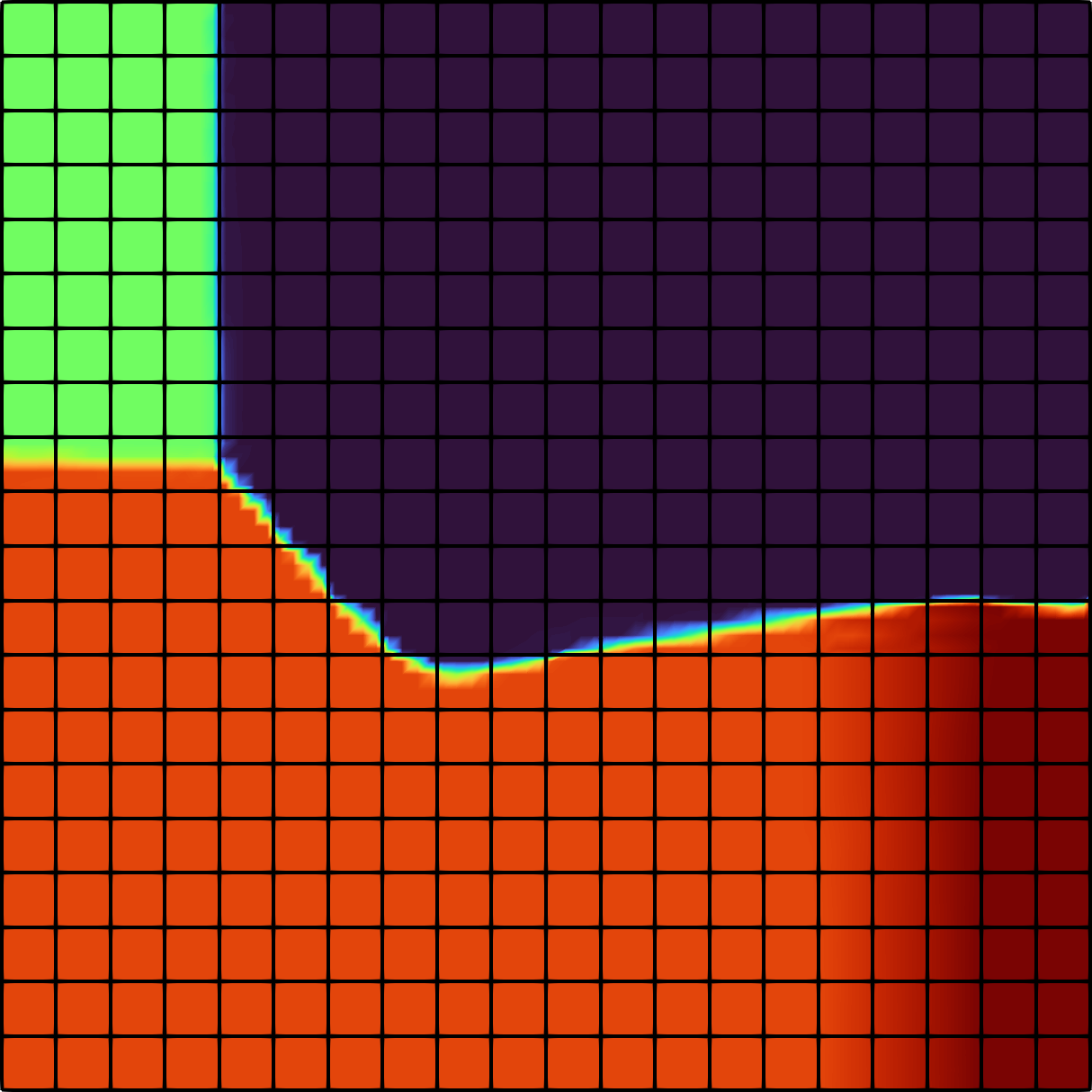}

      $p=5, n_{\rm 1D} = 20$
    \end{minipage}
    \hspace*{\fill}

    \vspace{\floatsep}

    \hspace*{\fill}
    \begin{minipage}[t]{\burgersfigscale\linewidth}
      \centering \vskip0pt
      \includegraphics[width=\linewidth]{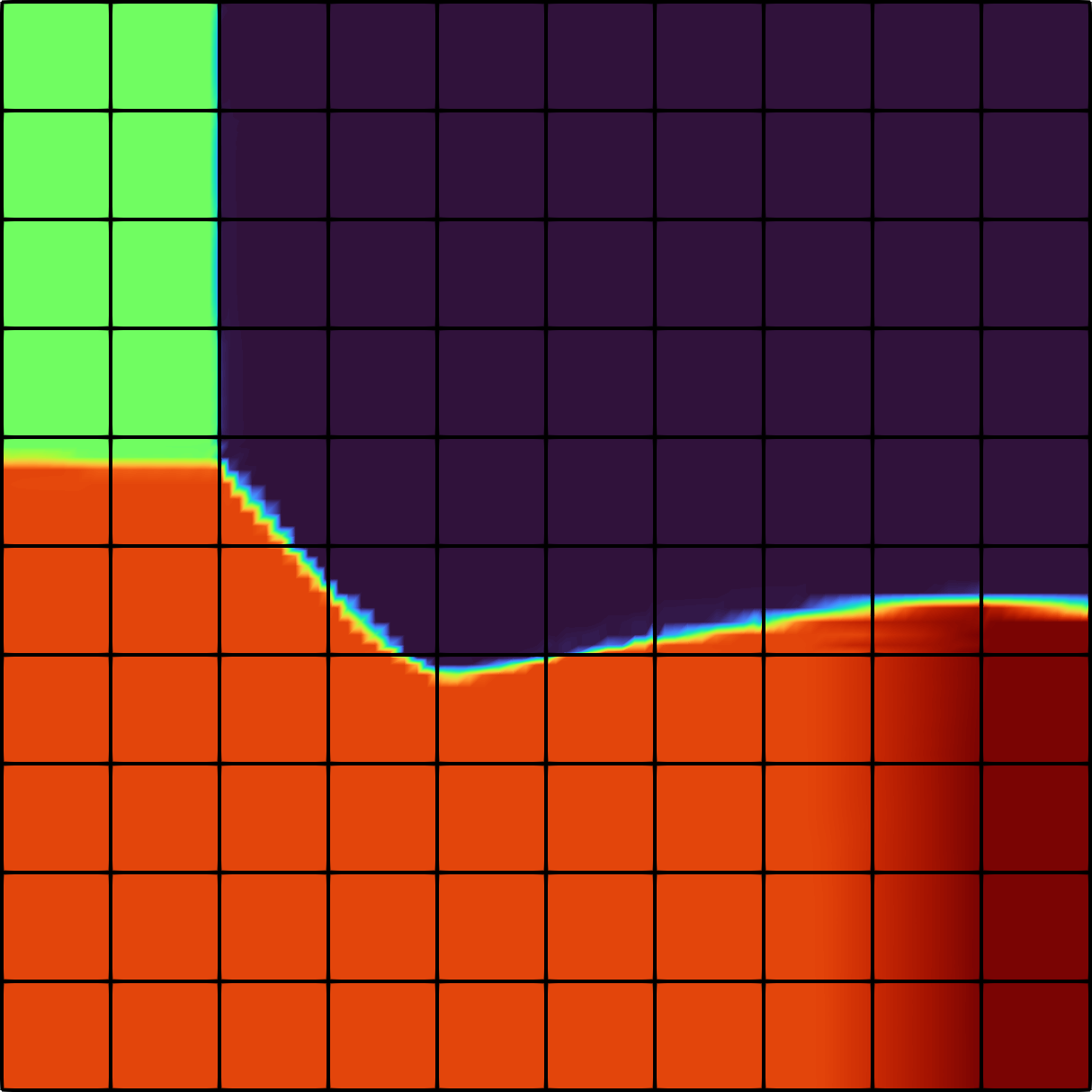}

      $p=11, n_{\rm 1D} = 10$
    \end{minipage}
    \hspace*{\fill}
    \begin{minipage}[t]{\burgersfigscale\linewidth}
      \centering \vskip0pt
      \includegraphics[width=\linewidth]{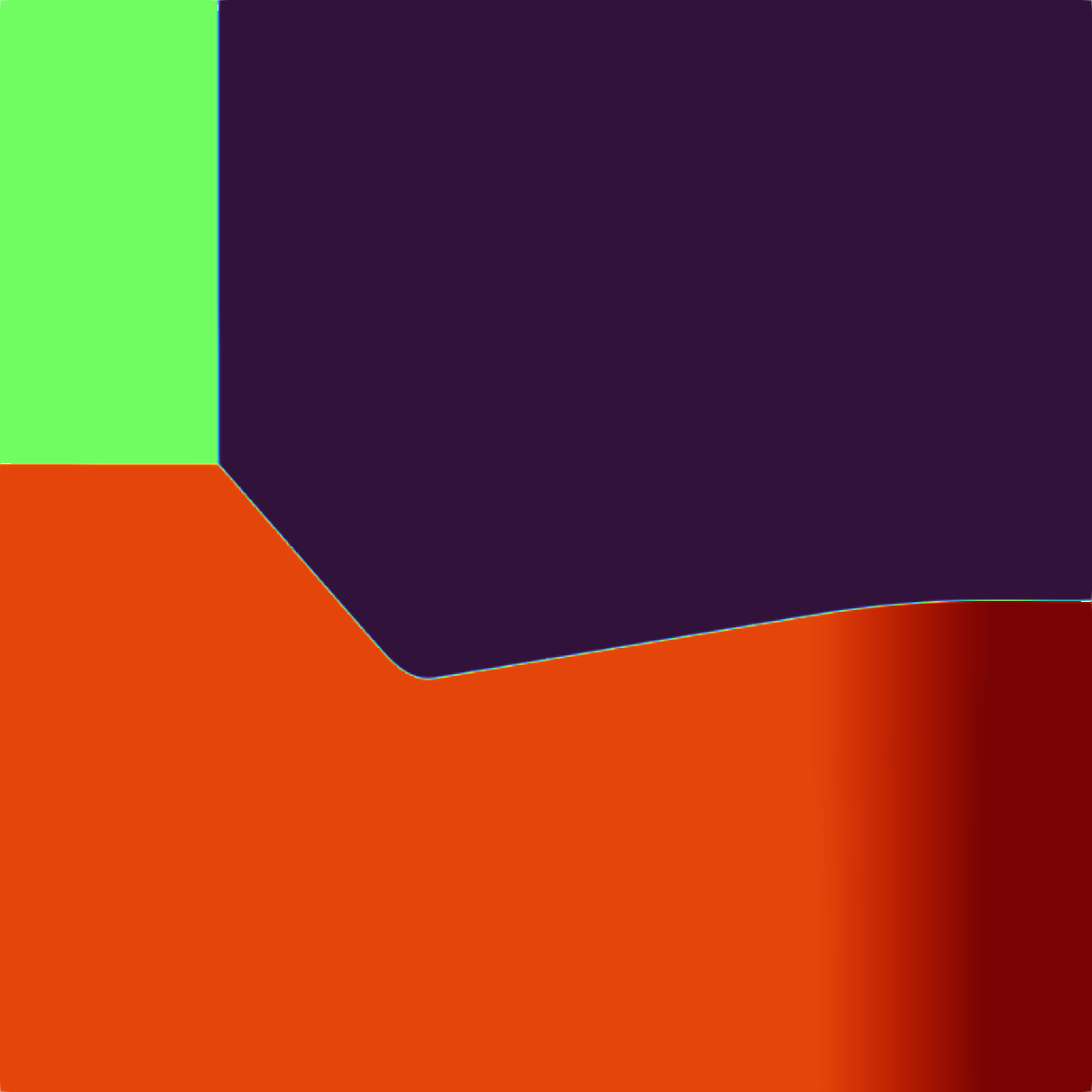}

      Reference ($p=1, n_{\rm 1D} = 512$)
    \end{minipage}
    \hspace*{\fill}
  \end{minipage}
  \begin{minipage}{0.19\linewidth}
    \includegraphics{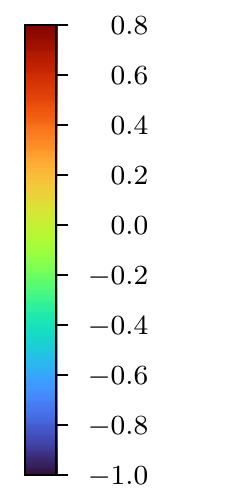}
  \end{minipage}

  \caption{2D Burgers equation Riemann problem, using $p=2,5,11$ with fixed number of degrees of freedom.}
  \label{fig:burgers2d}
\end{figure}

\subsection{2D Euler Riemann problem}

We now test the method on a Riemann problem for the 2D Euler equations (``configuration 12''), often used as a benchmark problem \cite{Liska2003,Schulz-Rinne1993,Lax1998}.
The spatial domain is taken to be $\Omega = [0,1] \times [0,1]$, and the initial conditions are defined by piecewise-constant data on each of the quadrants,
\begin{equation}
\left\{
\begin{aligned}
  \rho &= 4/5, \quad & \bm v &= (0,0), & p &= 1,
    \qquad && 0 < x < 1/2, \quad 0 < y < 1/2,\\
  \rho &= 1,   \quad & \bm v &= (3/\sqrt{17},0), & p &= 1,
    \qquad && 0 < x < 1/2, \quad 1/2 < y < 1, \\
  \rho &= 1,   \quad & \bm v &= (0,3/\sqrt{17}), & p &= 1,
    \qquad && 1/2 < x < 1, \quad 0 < y < 1/2, \\
  \rho &= 17/32,   \quad & \bm v &= (0,0), & p &= 2/5,
    \qquad && 1/2 < x < 1, \quad 1/2 < y < 1.
\end{aligned}
\right.
\end{equation}
The problem is made periodic on the enlarged domain $[0,2] \times [0,2]$ by reflecting the initial conditions about the point $(1,1)$, as described in \cite{Guermond2011}.
The solution is taken to be the restriction of the periodic solution to the subdomain $[0,1] \times [0,1]$.
The equations are integrated until time $t = 0.25$.
Polynomial degrees $p=1$ and $p=3$ are used, on $128 \times 128$ and $64 \times 64$ Caetesian grids, respectively, such that the total number of degrees of freedom is fixed for both calculations.
The density and pressure fields of the final solutions are shown in Figure \ref{fig:riemann2d}.
Both solutions resolve the large-scale features, including the shocks and contact discontinuities.
Although the number of degrees of freedom is the same for both cases, the solution obtained using $p=3$ polynomials shows sharper interfaces, and better-resolved small-scale features.

\begin{figure}
  \def\riemannfigscale{0.35}
  \hspace*{\fill}
  \begin{minipage}{\riemannfigscale\linewidth}
    \centering
    \includegraphics[width=\linewidth]{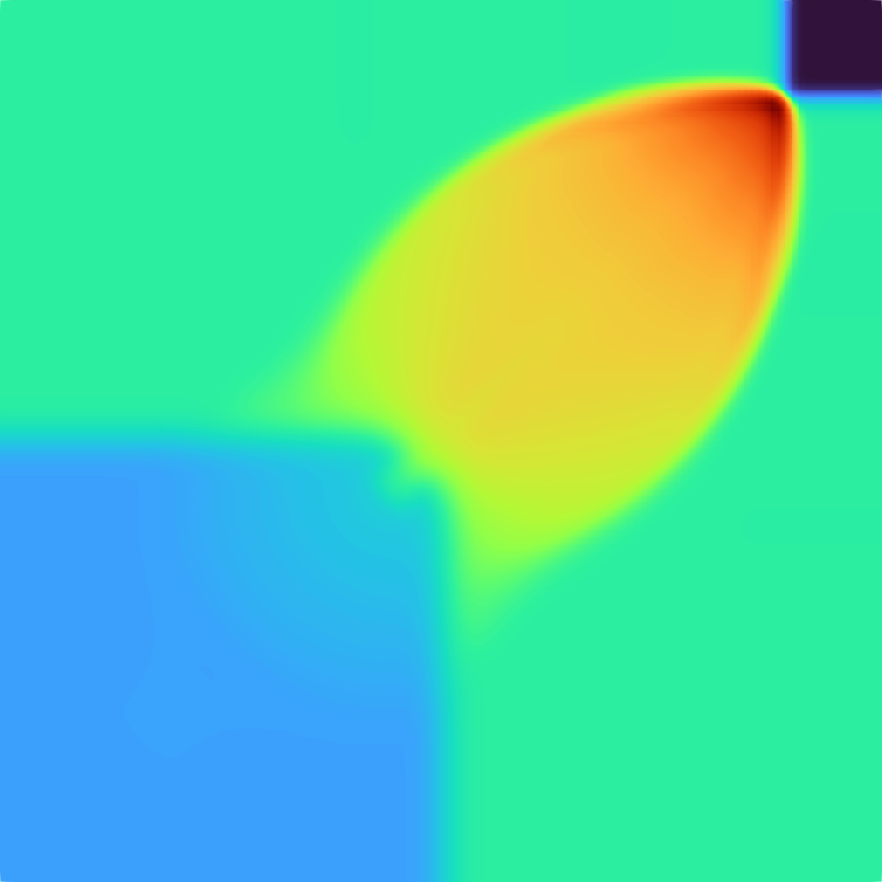}

    Density, \quad $p=1, n_{\rm 1D} = 128$
  \end{minipage}
  \hspace*{\fill}
  \begin{minipage}{\riemannfigscale\linewidth}
    \centering
    \includegraphics[width=\linewidth]{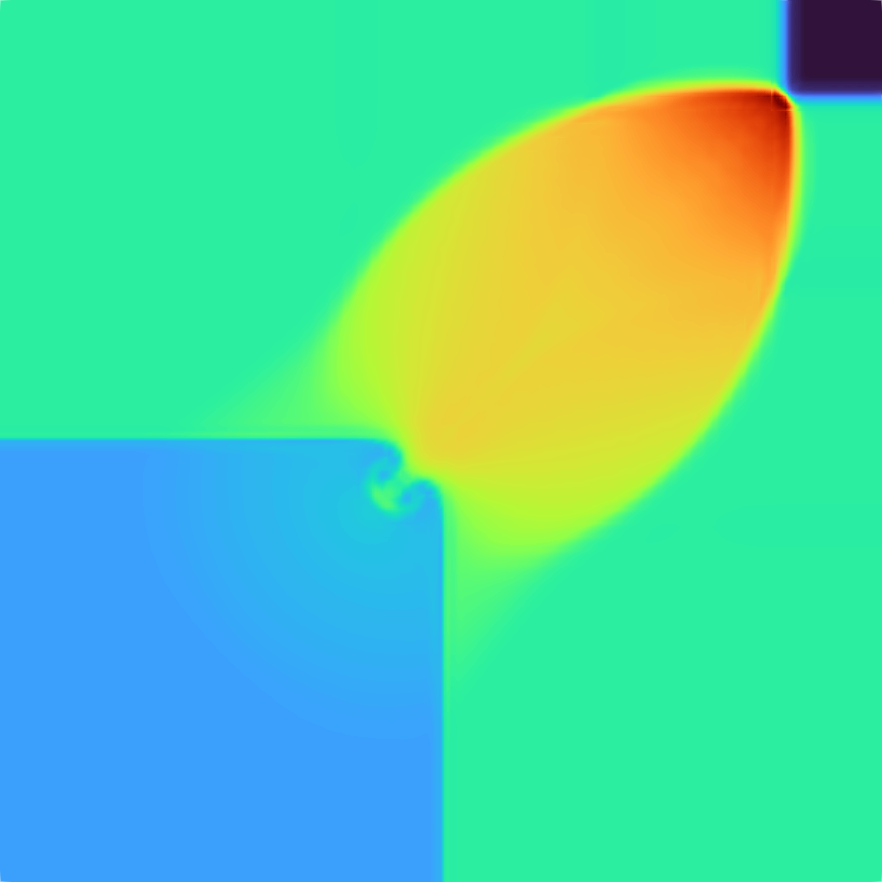}

    Density, \quad $p=3, n_{\rm 1D} = 64$
  \end{minipage}
  \hspace*{\fill}

  \vspace{\floatsep}

  \hspace*{\fill}
  \begin{minipage}{\riemannfigscale\linewidth}
    \centering
    \includegraphics[width=\linewidth]{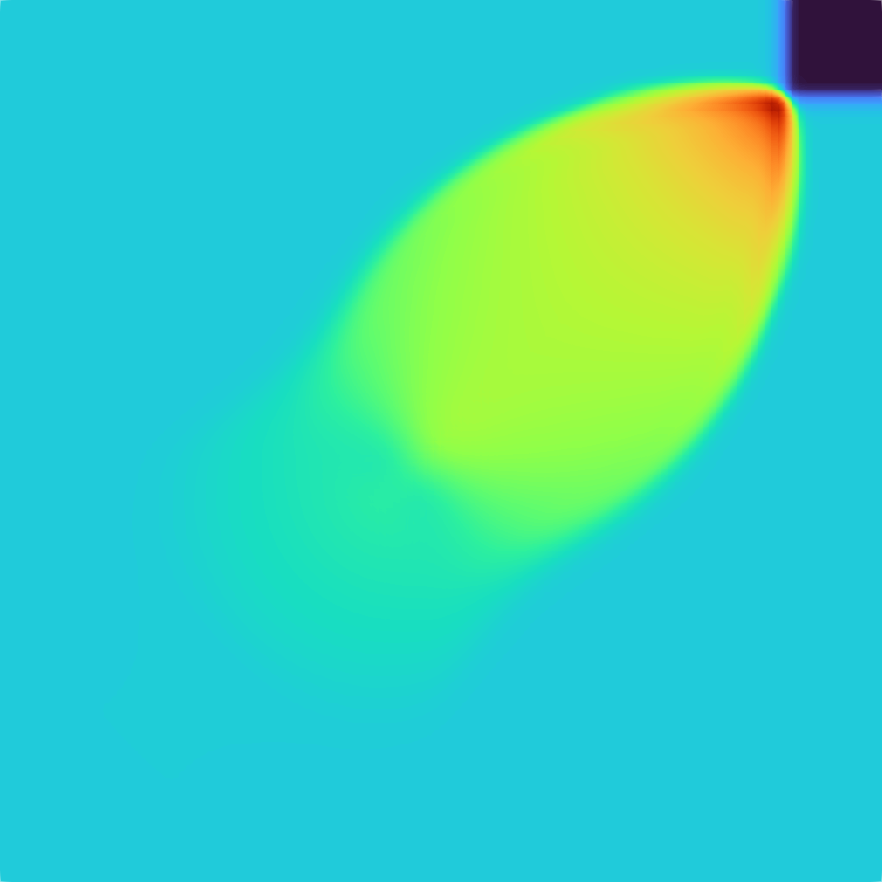}

    Pressure, \quad $p=1, n_{\rm 1D} = 128$
  \end{minipage}
  \hspace*{\fill}
  \begin{minipage}{\riemannfigscale\linewidth}
    \centering
    \includegraphics[width=\linewidth]{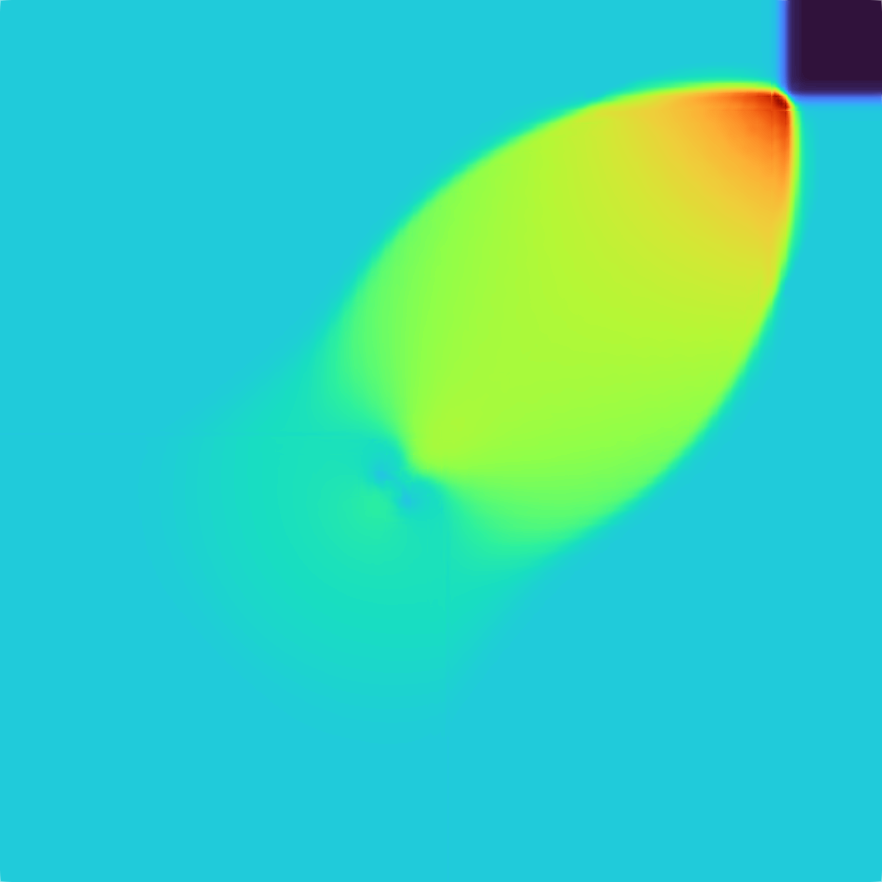}

    Pressure, \quad $p=3, n_{\rm 1D} = 64$
  \end{minipage}
  \hspace*{\fill}

  \caption{Density and pressure for 2D Euler equation Riemann problem, using $p=1,3$ with fixed number of degrees of freedom.}
  \label{fig:riemann2d}
\end{figure}

\subsection{Double Mach reflection}

Finally, we consider the double Mach reflection case of Woodward and Colella \cite{Woodward1984}.
This test cases consists of an incoming Mach 10 shock, that makes a 60$^\circ$ angle with a reflecting wall.
The undisturbed state ahead of the shock has density $\rho = 1.4$ and pressure $p=1$.
The problem is modeled in the rectangular domain $[0,4] \times [0,1]$, such that the bottom boundary (beginning at $x=1/6$) represents the inclined wedge.
The left (inflow) boundary and the interval $[0,1/6]$ on the bottom boundary are assigned the post-shock state.
The interval $[1/6,4]$ on the bottom boundary is assigned slip boundary conditions, and the top boundary is assigned a prescribed state using the exact shock speed.
Outflow conditions are enforced at the right boundary.
A fine mesh with $2400 \times 600$ elements with $p=3$ is used.
The density field and contours are shown in Figure \ref{fig:dmr}.
Small features such as the Kelvin-Helmholtz instability shown in the zoom-ins are indicative of the low dissipation of the scheme.

\begin{figure}
\centering
\includegraphics[width=\linewidth]{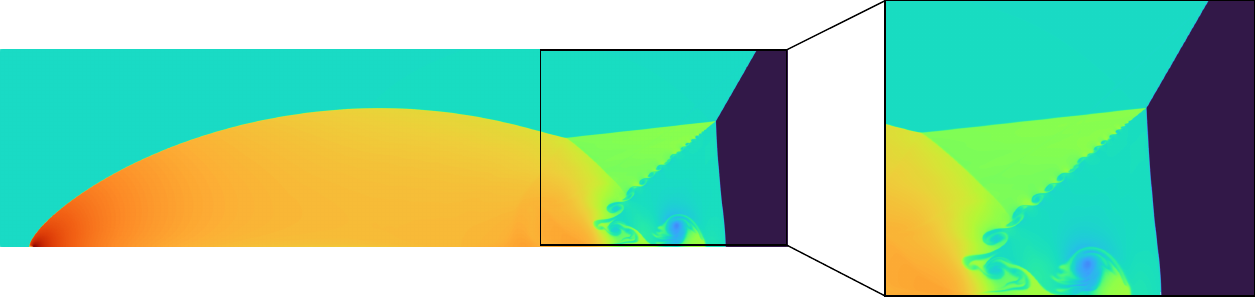}

\vspace{\floatsep}

\includegraphics[width=\linewidth]{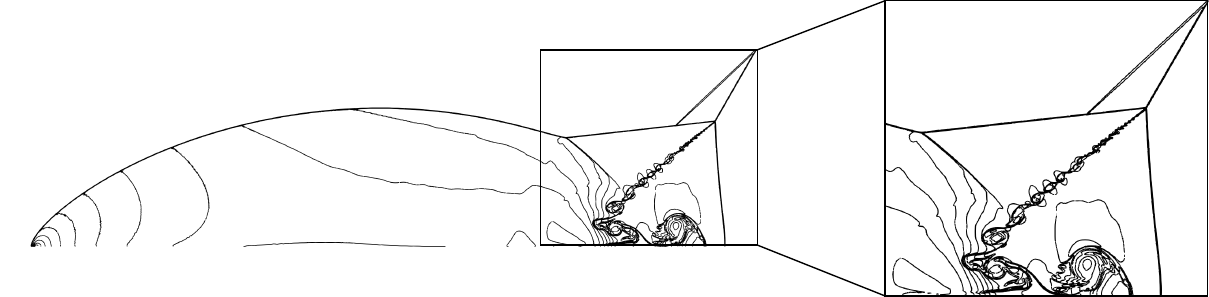}
\caption{Density for double Mach reflection problem at $t=0.275$, showing zoom-in on three-shock interaction point. Bottom panels: 30 equispaced density contours.}
\label{fig:dmr}
\end{figure}

\section{Conclusions}
\label{sec:conclusions}

In this work, we have presented a discontinuous Galerkin spectral element method with convex limiting for hyperbolic conservation laws.
This method preserves any specified set of invariant domain properties (e.g.\ local maximum principles, positivity of pressure and density, minimum principle for specific entropy, etc.).
The method is based on an efficient dimension-by-dimension subcell blending of the target high-order (unlimited) DG-SEM method, and a low-order, invariant domain preserving (IDP), sparsified scheme based on a graph viscosity approach.
Notably, the quality of this low-order IDP method does not degrade as the polynomial degree of the target method is increased, in contrast to non-sparsified graph viscosity approaches.
As a result, improved solution quality is obtained by using higher order target schemes on a variety of benchmark problems.
Additionally, a subcell resolution smoothness indicator is shown to be effective at reducing the peak clipping effect at smooth extrema.

\section{Acknowledgements}

The author acknowledges H.~Hajduk and D.~Kuzmin for insightful conversations and comments on this work.
\newline
\newline
{\small
This work was performed under the auspices of the U.S. Department of Energy by Lawrence Livermore National Laboratory under Contract DE-AC52-07NA27344.
LLNL-JRNL-808645.
This document was prepared as an account of work sponsored by an agency of the United States government.
Neither the United States government nor Lawrence Livermore National Security, LLC, nor any of their employees makes any warranty, expressed or implied, or assumes any legal liability or responsibility for the accuracy, completeness, or usefulness of any information, apparatus, product, or process disclosed, or represents that its use would not infringe privately owned rights.
Reference herein to any specific commercial product, process, or service by trade name, trademark, manufacturer, or otherwise does not necessarily constitute or imply its endorsement, recommendation, or favoring by the United States government or Lawrence Livermore National Security, LLC.
The views and opinions of authors expressed herein do not necessarily state or reflect those of the United States government or Lawrence Livermore National Security, LLC, and shall not be used for advertising or product endorsement purposes.
}

\bibliographystyle{siamplain}
\bibliography{refs}

\end{document}